\documentclass[11pt, a4paper]{amsart} 
\usepackage[margin=3cm]{geometry}
\usepackage[utf8]{inputenc}
\usepackage{csquotes}
\usepackage[T1]{fontenc}
\usepackage{silence}
\usepackage[
bibstyle=ieee,
citestyle=numeric,
sorting=nyvt,
url=true,
bibencoding=utf8,
backend=biber
]{biblatex}
\usepackage[english]{babel} 
\usepackage{amsthm, amsmath, amsfonts,amssymb}
\usepackage{dsfont}
\usepackage{tikz}
\usepackage{hyperref}
\usepackage{pgfplots}
\usetikzlibrary{
    arrows.meta,calc,positioning,backgrounds
}
\usepackage{esint}
\hypersetup{
    colorlinks=true,
    citecolor=black,
    linkcolor=blue,
    filecolor=black,      
    urlcolor=blue,
    pdfauthor=Gabriele Cassese,
    pdfcreator=Gabriele Cassese,
    pdftitle=Korn's inequality from the viewpoint
    of calculus of variations
}

\newtheorem{theorem}{Theorem}[section]
\newtheorem{proposition}{Proposition}[section]
\newtheorem{corollary}{Corollary}[theorem] 
\newtheorem{lemma}[theorem]{Lemma}
\theoremstyle{definition}
\newtheorem{definition}{\textsl{Definition}}

\theoremstyle{remark}
\newtheorem{remark}{Remark}
\newtheorem*{conjecture}{Conjecture}
\newcommand{\R}{\mathbb{R}}
\newcommand{\C}{\mathbb{C}}
\newcommand{\N}{\mathbb{N}}

\newcommand*\diff{\mathop{}\!\mathrm{d}}

\newcommand{\vvvert}[1]{
    {\left\vert\kern-0.25ex\left\vert\kern-
    0.25ex\left\vert #1 
    \right\vert\kern-0.25ex\right\vert\kern-
    0.25ex\right\vert}
}
    
\newcommand{\mat}[1]{
    \begin{pmatrix} #1 \end{pmatrix}
}

\newcommand{\abs}[1]{|#1|}

\newcommand{\norm}[1]{\|#1\|}
\pgfplotsset{compat=1.18}
\title{
Korn's inequality from the viewpoint of
calculus of variations
}

\begin{document}

\keywords{
Korn inequality, calculus of variations,
quasiconvexity, rank-one convexity}

\author{
    Gabriele Cassese
}

\address{
University of Oxford,
Mathematical Institute,
Radcliffe Observatory Quarter,
OX2 6GG,
Oxford,
United Kingdom
}

\email{
    gabriele.cassese@maths.ox.ac.uk
}

\begin{abstract}
We study the best possible constants in Korn-type
inequalities and their connection with
Morrey's problem in the calculus of variations.
We adapt techniques from the analysis of the
Beurling-Ahlfors transform to Korn's inequality.
In dimension $2$, we obtain a bound that is sharp up to a factor of $1.158$. In general, we show that the constant
in Korn's inequality admits a dimension-free bound, 
and we obtain an estimate that is sharp
up to a factor of $\sqrt 3$. 
We also establish several improvements to estimates
in various other function spaces.
Using a weighted version of 
Burkholder's differential subordination theorem, 
recently introduced in 
[J. Reine Angew. Math. 824 (2025), pp. 137-166],
we also prove a dimension-free
weighted version of the inequality
for Muckenhoupt weights.
\end{abstract}
\maketitle
\section{Introduction}
This paper addresses the problem dual to the one 
studied in~\cite{Cassese2025}:
obtaining upper bounds for the optimal constant
in Korn-type inequalities.
Korn's inequality, proved by
Korn~\cite{korn1908, korn1909} in $L^2$ and by 
Krylov~\cite{Krylov1962Dirichlet}
in the general case, states that
for an open subset $\Omega$ of
$\mathbb R^d$ and an exponent $p\in(1,\infty)$
there exists a constant $C(p,\Omega)$ such that
\begin{equation}\label{Korn}
    \left(
        \int_{\Omega} |\mathcal A(u)|^p
    \right)^{\frac 1p}
    \le
    C(p,\Omega)
    \left(
        \int_{\Omega}|\mathcal E(u)|^p
    \right)^{\frac 1p},
\end{equation}
holds for all Sobolev maps $u\in W^{1,p}_0(\Omega)$
where 
$\mathcal A(u)=P_{\mathrm{Skew}}(\nabla u)
\mathrel{:}=
1/2(\nabla u-\nabla u^t)$,
$\mathcal E(u)
\mathrel{:}=
1/2(\nabla u+\nabla u^t)$
and $|\cdot|$ is the Frobenius norm
on $d\times d$ matrices.
One often writes this equivalently as
\begin{equation}\label{eq:Kornfull}
    \left(
        \int_{\Omega}|\nabla u|^p
    \right)^{\frac1p}
    \le
    C_{\mathrm{KN}}(p,\Omega)
    \left(
        \int_{\Omega}|\mathcal E(u)|^p
    \right)^{\frac1p}.
\end{equation}
A stronger trace-free version of Korn's inequality 
also holds:
\begin{equation}\label{Korntrace}
    \left(
        \int_{\Omega} |\mathcal A_0(u)|^p
    \right)^{\frac 1p}
    \le
    C_0(p,\Omega,d)
    \left(
        \int_{\Omega}|\mathcal E_0(u)|^p
    \right)^{\frac 1p},
\end{equation}
where 
$\mathcal E_0(u)
=
\mathcal E(u)-
\mathrm{Tr}(\nabla u)(\mathrm{Id})/d$
and
$\mathcal A_0(u)
=
\mathcal A(u)+
\mathrm{Tr}(\nabla u)(\mathrm{Id})/d$.
These inequalities are particular cases
of the more general Calderón-Zygmund type inequality
\begin{equation}
    \|D^k(f)\|_{L^p}\lesssim_{p,P} \|P(f)\|_{L^p}
    \label{eq:genkorn}
\end{equation}
that is valid for any elliptic homogeneous 
differential operator of order $k$ with constant 
coefficients.
It is well known that the 
inequality~\eqref{eq:genkorn} fails
in the cases $p=1$ and $p=\infty$ unless $P$
is trivial
(see~\cite{Ornstein, deLeeuw, KirchheimJan}).
In the case of~\eqref{Korn},
the failure is well understood.
By~\cite{Breit2011}, Korn's inequality holds in the 
homogeneous
Sobolev-Orlicz space $\dot{W}^1L^\Phi$ 
if and only if
$\Phi$ satisfies both
the $\nabla_2$ and $\Delta_2$ conditions
(see section~\ref{sec:orlicz} for details).
This result clearly identifies the obstruction
at the endpoints:
at $p=1$, the function $\Phi_1(t)=t$
satisfies $\Delta_2$ but not $\nabla_2$
while for $p=\infty$
\[\Phi_\infty(t):=\begin{cases}
t,\ &t\le 1\\ 
\infty,\ &t>1
\end{cases}\] satisfies $\nabla_2$
but not $\Delta_2$.
Most of the literature on 
the inequality has focused on
the $L^2$ case. In that setting,
the inequality is a direct consequence
of the identity \[\|\mathcal 
E(u)\|_{L^2}^2=\|
\mathcal A(u)\|_{L^2}^2+
\|\text{div}(u)\|_{L^2}^2,
\] 
which implies
$C(2,\Omega)= 
1$. The constant is in fact
domain-independent for all $p\in(1,\infty)$:
$C(p,\Omega)=C(p,\R^d)$. Indeed,
approximation with compactly supported maps
shows that
\begin{equation*}
C(p,\R^d)=\lim_{r\to \infty} C(p, B_{\R^d}(0,r))
=C(p,B_{\R^d}(0,1))\le C(p, \R^d)
\end{equation*}
where the second equality follows from
the dilation invariance of Korn's inequality.
Since for any domain $\Omega$ containing
a ball $B(x_0,r)$ we have 
\begin{equation*}
C(p, B(x_0,r))\le C(p,\Omega)\le C(p,\R^d),
\end{equation*}
the result follows.
Accordingly, we henceforth write $C(p,d)$ to denote
Korn's constant for domains in $\R^d$.
Since a similar result holds for $C_0$
and $C_{\mathrm{KN}}$, we will drop the dependence
on $\Omega$ for them as well.\par
Korn's inequality has many applications,
notably in linearised elasticity, where it 
is employed to prove existence and regularity results
through coercivity estimates, and in
fluid mechanics; see~\cite{applied} and references
therein. Moreover, in recent years a 
new effort has been made to obtain simpler proofs
of the inequality (see for instance~\cite{PoinKorn}
and references therein) and to 
extend it to different function spaces
(see for instance~\cite{spector}).\par
The main objective of this paper is
to prove an almost sharp 
dimension-free bound for Korn's inequality.
More precisely, we prove the following:
In $2$ dimensions we obtain a substantially sharper
bound, which determines the best constant up to a
factor of $1.158$:
\begin{theorem}\label{thm:Korn2d}
    Given $u\in \dot{W}^{1,p}(\R^2, \R^2)$ we have
\begin{equation*}
    \|\mathcal A (u)\|_{L^p}\le 
    1.158(p^*-1)\|\mathcal E(u)\|_{L^p},
\end{equation*}
where $p^*=\max(p',p)$ and $p'$ is the
Hölder conjugate of $p$. Moreover, the best constant
satisfies
    \begin{equation}
        p^*-1\le C(p,2)\le 1.158(p^*-1).
    \end{equation}
\end{theorem}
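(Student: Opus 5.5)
In two dimensions we identify $\R^2$ with $\C$ and write $u=u_1+iu_2$, so that $\nabla u$ is encoded by the Wirtinger derivatives $\partial u=u_z$ and $\bar\partial u=u_{\bar z}$. A direct computation shows that $|\mathcal A(u)|=\sqrt2\,|\mathrm{Im}\,\partial u|$, since the antisymmetric part is governed by the curl $\partial_1u_2-\partial_2u_1=2\,\mathrm{Im}\,u_z$. Likewise $|\mathcal E(u)|^2=2(\mathrm{Re}\,u_z)^2+2|u_{\bar z}|^2$, since the trace part is governed by $\mathrm{div}\,u=2\,\mathrm{Re}\,u_z$ and the trace-free symmetric part by $u_{\bar z}$. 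For $u\in C_c^\infty$ we have $\partial u=\mathcal B(\bar\partial u)$, where $\mathcal B$ is the Beurling--Ahlfors transform. Writing $f=\bar\partial u$ and $g=\mathrm{Re}\,\partial u$, the task is to bound
\[
\mathrm{Im}\,\partial u=\mathrm{Im}\,\mathcal B f
\]
in terms of the pair $(g,f)$, using $g=\mathrm{Re}\,\mathcal Bf$. Equivalently, we must bound $\|\mathrm{Im}\,\mathcal B f\|_p$ by $C\,\|(\mathrm{Re}\,\mathcal Bf,f)\|_p$ with the mixed norm $\sqrt{2g^2+2|f|^2}$ in the denominator and $\sqrt2|\cdot|$ in the numerator. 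The factors of $\sqrt 2$ cancel, so we need
\[
\|\mathrm{Im}\,\mathcal Bf\|_p\le C\,\bigl\|\sqrt{|\mathrm{Re}\,\mathcal Bf|^2+|f|^2}\bigr\|_p .
\]

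For the upper bound, the plan is to follow the Bañuelos--Wang / Nazarov--Volberg approach via heat (or space-time Brownian) martingales. First, $\mathcal B$ is represented as a conditional expectation of a martingale transform, $\mathcal B f=\mathbb E[\int A\,\nabla U_f\cdot dZ]$, with matrix $A$ corresponding to the multiplier $\bar\xi/\xi$. Second, the real and imaginary parts of $\mathcal B f$ are realised as transforms of the $\R^2$-valued martingale $X=U_f$ by the matrices $A_{\mathrm{Re}}$ and $A_{\mathrm{Im}}$. Third, we consider the $\R^3$-valued martingale $(\mathrm{Re}\,Y, X)$ with $Y=A\star X$, and compare the quadratic variation of $\mathrm{Im}\,Y$ with that of this pair. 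The aim is a differential subordination of the form $d\langle \mathrm{Im}\,Y\rangle\le c^2\,d\langle(\mathrm{Re}\,Y,X)\rangle$ with an explicit $c$; for instance $\mathrm{Im}\,Y$ is subordinate to $X$ alone, with constant $1$. Fourth, we apply Burkholder's inequality $\|\mathrm{Im}\,Y\|_p\le (p^*-1)\|\cdot\|_p$ for subordinate Hilbert-space-valued martingales and pass back to functions using the contraction of conditional expectation. Done directly, this only gives a constant of the form $(p^*-1)$ times the subordination constant. The number $1.158$ must come from a sharper argument that exploits the extra information $g$ in the denominator. I would achieve this by optimising over a one-parameter family, writing $\mathrm{Im}\,\mathcal B = \alpha\cdot(\text{something controlled by }f)+\beta\cdot(\text{controlled by }g)$ and using Burkholder's sharp estimates for orthogonal martingales (the Bañuelos--Wang bound $\cot(\pi/2p^*)$ for real parts, or the Nazarov--Volberg-type $\sqrt{\cdot}$ improvements). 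Minimising the resulting constant relative to $p^*-1$ should yield the numerical factor $1.158$.

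For the lower bound $C(p,2)\ge p^*-1$, I would first reduce to $p\ge2$ by duality, since the operator $\mathcal E(u)\mapsto\mathcal A(u)$ has an adjoint of the same type. I would then construct test maps, either radial ones such as $u(z)=z\,|z|^{\gamma}$ truncated, or laminates via rank-one convexity: Korn's constant being less than $p^*-1$ would make a certain Burkholder-type functional $F(\mathcal A,\mathcal E)=|\mathcal A|^p-C^p|\mathcal E|^p$ quasiconvex at $0$. Testing rank-one convexity along directions $a\otimes b$ then reduces to the one-dimensional Burkholder extremal problem, whose sharp constant is $p^*-1$. Concretely, along rank-one lines the skew and symmetric parts behave like a martingale pair with $|d\mathcal A|\le|d\mathcal E|$ and equality attainable. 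Laminate constructions as in Burkholder's sharpness proof give the bound, and these laminates can be realised by maps via the standard approximation of laminates by gradients.

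The main obstacle is the constant $1.158$: finding the right decomposition and the right sharp martingale inequality so that the numerics close. Both the subordination check and the lower-bound laminate are comparatively routine.
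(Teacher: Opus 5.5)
\textbf{Gap: the upper bound.} Your proposal never produces the constant $1.158$, and the mechanism you guess for it is not the one that works.

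The paper does not exploit the trace information $g=\mathrm{Re}\,\partial u$ at all; it discards it. It uses $|\mathcal E(u)|\ge|\mathcal E_0(u)|$, together with the fact that $|\mathcal E_0(u)|$ and $|\bar\partial u|$ differ by the same factor $\sqrt2$ as $|\mathcal A(u)|$ and $|\mathrm{Im}\,\partial u|$. So it suffices to bound the real-linear operator $\mathcal I f=\mathrm{Im}(T_{BA}f)$ from $L^p(\C;\C)$ to $L^p(\C;\R)$.

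The missing idea is duality. Write $T_{BA}=A+iB$ on real-valued functions, with $A,B$ real and self-adjoint. Then $\mathcal I(f_1+if_2)=Bf_1+Af_2$, so $\mathcal I^*h=(Bh,Ah)$ and $|\mathcal I^*h|=|T_{BA}h|$ for real $h$. Hence $\|\mathcal I\|_{L^p(\C;\C)\to L^p(\C;\R)}=\|T_{BA}\|_{L^{p'}(\C;\R)\to L^{p'}(\C;\C)}$. The factor $1.158$ is then imported from a known estimate, cited in the paper, for the Beurling--Ahlfors transform restricted to real-valued functions: $\|T_{BA}\|_{L^q(\C;\R)\to L^q(\C;\C)}\le 1.158(q^*-1)$, applied with $q=p'$. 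It does not come from optimising a one-parameter family of Burkholder-type bounds, and nothing in your sketch shows such an optimisation would reach $1.158$.

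\textbf{Incorrect intermediate steps.} First, $\mathrm{Im}\,Y$ is not subordinate to $X=U_f$ with constant $1$. With the natural representation, its quadratic-variation density is $|\nabla U_f|^2\pm2\det\nabla U_f$, i.e.\ $4|\bar\partial U_f|^2$ or $4|\partial U_f|^2$ depending on convention, and this reaches $2|\nabla U_f|^2$. Adding antisymmetric parts to the representing tensor does not help. The representing $2\times4$ matrix always has squared Frobenius norm at least $4$ and rank at most $2$, so its operator norm is at least $\sqrt2$. Had the constant been $1$, Burkholder would give $\|\mathrm{Im}\,T_{BA}f\|_p\le(p^*-1)\|f\|_p$, hence $C(p,2)=p^*-1$ and the real-valued Iwaniec conjecture, both of which are open. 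So your remark that the direct route ``only'' gives $(p^*-1)$ times the subordination constant is internally inconsistent.

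Second, you cannot put $\mathrm{Re}\,Y$ into the dominating martingale. Its terminal value is not $g(\mathcal B_T)$, and conditional Jensen gives $\|g\|_p\le\|\mathrm{Re}\,Y_T\|_p$, which is the wrong direction. If you instead use the heat martingale of $\mathcal E(u)$ itself, you are back to the paper's general-$d$ argument. In $d=2$ that gives $\sqrt3\,\mathfrak K(2)=\sqrt2$, not $1.158$.

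\textbf{Lower bound.} Your laminate route is the paper's route. The martingale $f_n(e_1\otimes e_2+e_2\otimes e_1)+g_n(e_1\otimes e_2-e_2\otimes e_1)$ has rank-one increments exactly when $|df_n|=|dg_n|$. Sharpness of Burkholder's inequality among $\pm1$ transforms then gives $C(p,2)\ge C^{\mathrm{rc}}(p,2)\ge p^*-1$ for every $p\in(1,\infty)$. The integrand to test is $C^p|P_{\mathrm{Sym}}(A)|^p-|P_{\mathrm{Skew}}(A)|^p$, not its negative.

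This makes your duality reduction to $p\ge2$ unnecessary. That reduction is also not justified as stated, since $\mathcal E(u)\mapsto\mathcal A(u)$ is defined only on the proper subspace $\mathcal E(\dot W^{1,p})$. Finally, your radial candidates $z|z|^\gamma$ are useless: their gradients are symmetric, so $\mathcal A(u)=0$.
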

\begin{remark}
    The conjectured equality $C(p,2)=p^*-1$ is closely
    related to the Iwaniec conjecture (see
    Section~\ref{sec:prereq} for more details).
    Both conjectures can be restated in terms of quasiconvexity at $0$
    of certain rank-one convex integrands, but in Iwaniec's case, where the focus lies on the decomposition of 
     $2\times 2$ matrices in
    their conformal and anticonformal parts, the associated
    integrand is isotropic. Isotropy has long been 
    considered one of the main properties to take advantage
    of when trying to prove quasiconvexity (see \cite{Guerra2022}), but 
    this fails in the 
    $\mathrm{Sym}-\mathrm{Skew}$ case; as such we expected 
    the problem to prove harder to solve: surprisingly, this turns out not to be the case. In other words, the equality between the quasiconvex and
rank-one convex bounds appears, a priori, more delicate in the
$\mathrm{Sym}$--$\mathrm{Skew}$ setting. The
martingale--laminate correspondence developed below shows,
however, that once the sharp rank-one convex bound is identified,
this equality would follow from the Iwaniec conjecture (see Remark \ref{rmk:iwaniecreal}
for more details).
\end{remark}
In higher dimensions, we do not have a similarly
strong theorem, but we can prove a dimension-free
upper bound:
\begin{theorem}\label{thm:Korn}
Given $u\in \dot{W}^{1,p}(\R^d, \R^d)$ we have
\begin{equation*}
    \|\mathcal A (u)\|_{L^p}\le 
    \sqrt 3(p^*-1)\|\mathcal E(u)\|_{L^p}.
\end{equation*}
In other words
\begin{equation}
C(p,d)\le \sqrt 3(p^*-1)
\end{equation}
Moreover, 
\begin{equation}
C(p,d)\ge p^*-1.
\end{equation}
The same result holds true if we consider the
trace-free version of Korn's inequality for $d\ge 3$:
\begin{equation}
    p^*-1\le C_0(p,d)\le \sqrt{3}(p^*-1).
\end{equation}
Finally, if $u$ is generalised-radial,
meaning that $u(x)=r(|x|)x$ for some
$r\colon\mathbb{R}^+\to M_d(\mathbb{R})$, then
we have the following stronger result:
\begin{equation}
\|\mathcal A(u)\|_{L^p}
\le (p^*-1)\|\mathcal E(u)\|_{L^p}
\end{equation}
and, if $p\ge 2, d=2$, the constant cannot be improved.
\end{theorem}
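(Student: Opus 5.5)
We plan to prove Theorem~\ref{thm:Korn} by writing $\mathcal A(u)$ as a Fourier multiplier applied to $\mathcal E(u)$, and then expressing that multiplier through second-order Riesz transforms. On the Fourier side, $\widehat{\nabla u}(\xi)=i\,\hat u(\xi)\otimes\xi$. Moreover, the symmetric part determines $\hat u$: one checks that $\hat u\,|\xi|^2 = 2\widehat{\mathcal E}\,\xi - (\xi\cdot\widehat{\mathcal E}\xi)\xi/|\xi|^2$ (up to factors of $i$). Hence
\[
\widehat{\mathcal A(u)}=\tfrac{1}{|\xi|^2}\bigl(\widehat{\mathcal E}\xi\otimes\xi-\xi\otimes\widehat{\mathcal E}\xi\bigr),
\]
which means $\mathcal A(u)_{jk}=\sum_l (R_lR_k\mathcal E_{jl}-R_lR_j\mathcal E_{kl})$, with $R_j$ the Riesz transforms. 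The task is therefore to bound the operator $T\colon \mathrm{Sym}\text{-valued }L^p\to \mathrm{Skew}\text{-valued }L^p$, $T(E)=\bigl(RR^t E-ERR^t\bigr)$ restricted to the skew part, uniformly in $d$.

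\textbf{Upper bound.} We follow the Bañuelos–Méndez-Hernández / Geiss–Montgomery-Smith–Saksman approach used for the Beurling–Ahlfors transform. Take the heat (or Poisson) extension of $E$ and run space-time Brownian motion. Then $T E$ is a conditional expectation of a martingale transform $\int A\,dM$, where $M=\int\nabla \tilde E\cdot dB$ and the transform acts on $dB\otimes$ matrix entries by the matrix $A$ obtained by "removing one $\xi/|\xi|$". Concretely, the transform maps $H\in \mathrm{Sym}\otimes\R^d$ (the gradient) to $(H_{j\cdot k}-H_{k\cdot j})$-type skew combinations.

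Burkholder's differential subordination theorem for Hilbert-space-valued martingales then gives $\|TE\|_p\le \|A\|_{op}(p^*-1)\|E\|_p$, because conditional expectation is an $L^p$ contraction. The core computation is the Hilbert–Schmidt operator norm of the linear map $\Lambda\colon \mathrm{Sym}\otimes\R^d\to \mathrm{Skew}$ given by $\Lambda(H)_{jk}=\sum_l(H_{jl,k}-H_{kl,j})$. This is where $\sqrt3$ should appear: expanding $|\Lambda H|^2$ and bounding the cross terms by Cauchy–Schwarz, we expect $|\Lambda H|^2\le 3|H|^2$ independently of $d$, with the constant coming from the three-term cyclic structure of the symmetric/antisymmetric pairing. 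We expect this to be the main obstacle. The norm must be shown dimension-free, and it has to be computed in the right normalisation: it may be necessary to exploit that the gradient of a Sym-valued heat extension has extra structure, for instance by passing to the Fourier-side restriction. The trace-free case $d\ge3$ is handled identically with the projections onto $\mathrm{Sym}_0$ and $\mathrm{Skew}\oplus\R\mathrm{Id}$; we would recompute the same norm there.

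\textbf{Lower bound.} We take a Fourier multiplier reduction. Restrict to maps depending on two variables, $u=(u_1(x_1,x_2),u_2(x_1,x_2),0,\dots)$. For these, $C(p,d)\ge C(p,2)$, which is at least $p^*-1$ by Theorem~\ref{thm:Korn2d}. If we cannot cite it, we show directly that the rank-one convex envelope of $|A|^p-c^p|E|^p$ fails to be nonnegative for $c<p^*-1$. This uses laminates built from Burkholder's extremal function, lifted to matrices $a\otimes b$ with $\mathrm{Sym}$/$\mathrm{Skew}$ components related like real and imaginary parts of $\partial$, $\bar\partial$. Laminates are then approximated by gradients by the standard Müller–Šverák argument. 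For $C_0$ the same embedding applies, since two-variable trace-free examples embed.

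\textbf{Generalised-radial maps.} For $u=r(|x|)x$, $\nabla u=r(\rho)+\rho\, r'(\rho)\,\theta\otimes\theta$ with $\theta=x/|x|$. Along each ray this reduces to a one-dimensional problem, a Hardy-type identity relating $r$ and $\rho r'$. We would prove the $(p^*-1)$ bound pointwise in $\theta$ via Burkholder's function $V(x,y)=|y|^p-(p^*-1)^p|x|^p$ and its majorant $U$, integrating $U$ in $\rho$ by parts in polar coordinates, as in the radial Beurling–Ahlfors estimates of Bañuelos–Janakiraman. Sharpness for $p\ge2$, $d=2$ would come from the explicit power-type radial examples $u=|x|^{\alpha}\,(\text{rotation})\,x$ with $\alpha\to$ critical exponent.
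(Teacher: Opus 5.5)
Your plan follows the paper's route throughout: space-time heat martingales plus Burkholder for the upper bound, $\pm1$-transform laminates for the lower bound, and rank-one convexity of Burkholder's function plus a polar integration by parts for radial maps. But the step that produces $\sqrt3$ is only stated as an expectation, and the method you propose for it cannot give $3$.

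First, in $\mathrm dZ_{jk}=\sum_l\bigl(\partial_kU\mathcal E_{jl}-\partial_jU\mathcal E_{kl}\bigr)\mathrm d\mathcal B^l$ the Brownian index $l$ must stay free, although your formula for $\Lambda(H)_{jk}$ sums it out. So $\Lambda$ maps into $\mathrm{Skew}(d)\otimes\R^d$, and what you need is its operator norm, for Frobenius norms, on the subspace $\R^d\otimes\mathrm{Sym}(d)$. The equally valid choice $\sum_l\partial_lU\mathcal E_{jl}\,\mathrm d\mathcal B^k$ for the first term produces divergence terms and a $d$-dependent constant.

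With $a_{kjl}=\partial_kU\mathcal E_{jl}$ and $\sigma$ swapping the first two slots, $\mathrm d\langle Z\rangle/\mathrm dt=|(I-\sigma)a|^2=2|a|^2-2\langle a,\sigma a\rangle$. Cauchy--Schwarz on the cross term yields only $4|a|^2$, i.e.\ the constant $2(p^*-1)$. Moreover $4$ is attained once the symmetry is dropped: $a=e_1\otimes e_2\otimes e_1-e_2\otimes e_1\otimes e_1$ has $\sigma a=-a$.

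What is missing is the one-sided bound $\langle a,\sigma a\rangle\ge-\tfrac12|a|^2$ for $a_{kjl}=a_{klj}$. Let $\sigma'$ swap slots $1$ and $3$. For such $a$ the three cross terms $\langle a,\sigma a\rangle$, $\langle a,\sigma'a\rangle$, $\langle\sigma a,\sigma'a\rangle$ coincide, so $|(I-\sigma)a|^2=3|a|^2-\tfrac13|a+\sigma a+\sigma'a|^2\le3|a|^2$. This is Lemma~\ref{lemma:linalg1}, which the paper proves via $T^2=\tfrac12(I+T)$ for $T=\pi\sigma\pi$. Your ``three-term cyclic structure'' is the right intuition, but it must enter as positivity of the full symmetrisation, not through Cauchy--Schwarz. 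Only the pointwise symmetry of $\mathcal E(u)$ is used; no Fourier-side structure is needed.

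Three further steps are asserted rather than checked.

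\emph{Trace-free case.} This is not literally the same computation. By~\eqref{eq:RieszKornTrace} the $\mathrm{Id}$-component contributes $\tfrac{d}{(d-1)^2}|c|^2$ with $c_l=\sum_ia_{iil}$, so you need $|(I-\sigma)a|^2+\tfrac{d}{(d-1)^2}|c|^2\le3|a|^2$ on $\R^d\otimes\mathrm{Sym}_0(d)$ (Lemma~\ref{lemma:linalg2}). Via the identity above, this follows from three facts: the symmetrisation $\tfrac13(a+\sigma a+\sigma'a)$ has trace $\tfrac23c$; every symmetric $3$-tensor $s$ satisfies $|s|^2\ge\tfrac3{d+2}|\mathrm{tr}\,s|^2$; and $\tfrac{d}{(d-1)^2}\le\tfrac4{d+2}$ exactly when $d\ge3$. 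This is where the hypothesis $d\ge3$ comes from; for $d=2$ the sharp constant is $4$.

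\emph{Generalised-radial maps.} Your scheme is the Ball--Sivaloganathan argument (Proposition~\ref{prop:radial}) applied to $F(A)=\mathcal G_p(|P_{\mathrm{Skew}}A|,|P_{\mathrm{Sym}}A|)$, your $-U$, as in Proposition~\ref{prop:lowerconvex}. You must state why $F$ is rank-one convex: $|P_{\mathrm{Skew}}(a\otimes b)|\le|P_{\mathrm{Sym}}(a\otimes b)|$ for all $a,b$. Also, the integration by parts does not close ray by ray. After mollifying $F$, which preserves rank-one convexity, the first-order term $\rho\langle DF(r(\rho)),(r'(\rho)\theta)\otimes\theta\rangle$ becomes $\tfrac\rho d\tfrac{\mathrm d}{\mathrm d\rho}F(r(\rho))$ only after averaging with $\fint_{\mathbb S^{d-1}}\theta\otimes\theta=\tfrac1d\mathrm{Id}$.

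\emph{Lower bound.} This is correct in substance and is the paper's argument. The process $f_n(e_1\otimes e_2+e_2\otimes e_1)+g_n(e_1\otimes e_2-e_2\otimes e_1)$ with $\mathrm dg_n=\pm\mathrm df_n$ has rank-one increments. Burkholder's extremal $\pm1$-transform sequences (not his function) therefore make the envelope of $c^p|P_{\mathrm{Sym}}A|^p-|P_{\mathrm{Skew}}A|^p$ negative at $0$ for $c<p^*-1$; your integrand has the sign reversed. If you instead go through two-variable maps, they need the cutoff and rescaling of Proposition~\ref{prop:monotonedim}, since such maps are not in $\dot W^{1,p}(\R^d)$.

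\emph{Radial sharpness.} Your power-type witnesses work as well as the paper's $Qx\ln^k|x|$. For $u=|x|^\alpha Qx$, with $Q$ as in Proposition~\ref{prop:lowerbound2}, one has $|\mathcal A(u)|/|\mathcal E(u)|\equiv(2+\alpha)/|\alpha|$. This tends to $p-1$ as $\alpha\downarrow-2/p$, where the $L^p$ mass concentrates at the origin.
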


These results naturally suggest the conjecture
that \emph{$p^*-1$ is in fact the sharp constant
in all the above inequalities}. This
is in line with the Iwaniec conjecture
about the norm of 
the Beurling-Ahlfors operator
and its generalisation to higher dimensions.
As we will show in section~\ref{sec:calcvar},
the above conjecture would follow if one could show
that a certain rank-one convex integrand
is quasiconvex at $0$, just as in the
case of Beurling-Ahlfors.
These $L^p$ estimates combine naturally
with a version of the Riesz-Thorin
theorem for Orlicz spaces obtained
in~\cite{RieszThorinOrlicz}.
As a consequence, we obtain a dimension-free
bound for the Korn
inequality in all Orlicz spaces
in which the inequality holds:
\begin{theorem}\label{thm:KornOrlicz}
Given an $N$-function $\Phi$ satisfying
both the $\Delta_2$
and $\nabla_2$ conditions
and $u\in \dot{W}^1L^\Phi(\R^d, \R^d)$ we have
\begin{equation*}
\|\mathcal A(u)\|_{L^\Phi}\le
K \max(p^*-1,q^*-1)\|\mathcal E(u)\|_{L^\Phi},
\end{equation*}
where $p,q$ are $\Phi$'s indices
(see section~\ref{sec:orlicz})
and $K$ is a constant that can be explicitly estimated
and in particular is independent of $d$.
A modular version
of the inequality also holds.
\end{theorem}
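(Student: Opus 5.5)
The natural route is Orlicz-space interpolation of the dimension-free $L^p$ bounds from Theorem~\ref{thm:Korn}. We rewrite Korn's inequality as the boundedness of one linear operator: on $\dot W^{1,p}$ fields, $\mathcal A(u)$ is obtained from $\mathcal E(u)$ by a matrix of Fourier multipliers homogeneous of degree zero. Concretely, $\hat u$ is determined by $\widehat{\mathcal E(u)}$ via $\hat u = 2\bigl(\widehat{\mathcal E}\xi/|\xi|^2 - (\xi^t\widehat{\mathcal E}\xi)\xi/(2|\xi|^4)\bigr)/i$, so there is a map $T$ with $T(\mathcal E(u))=\mathcal A(u)$. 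We extend $T$ to a bounded operator on all of $L^p(\R^d,\mathrm{Sym})$ by precomposing with a bounded projection onto the range of $\mathcal E$ (the range is closed by Korn, and it is the image of a Calderón--Zygmund projector). This extension only adds a dimension-free factor if we are careful. A cleaner alternative is to work directly with the multiplier operator $T$ defined on all symmetric matrix fields by the formula above. We then need $\|T\|_{L^p\to L^p}\le \sqrt3(p^*-1)$ on all of $L^p$, not only on symmetric gradients. The first step is therefore to check that the proof of Theorem~\ref{thm:Korn} (via martingale transforms and Burkholder's subordination) in fact gives the bound for the full multiplier operator. If it does not, we accept a constant $K$ that absorbs the norm of the projection.

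Second, we invoke the Riesz--Thorin type theorem for Orlicz spaces of~\cite{RieszThorinOrlicz}. If $\Phi$ is an $N$-function with Boyd/Simonenko indices $1<p\le q<\infty$ (finite and nontrivial exactly because of $\Delta_2$ and $\nabla_2$, see section~\ref{sec:orlicz}), then any linear $T$ bounded on $L^{p_0}$ and $L^{q_0}$ for $p_0<p\le q<q_0$ is bounded on $L^\Phi$. The norm is at most a constant $c(p_0,q_0,p,q)$ times $\max(\|T\|_{p_0},\|T\|_{q_0})$. We choose $p_0$ and $q_0$ close to $p$ and $q$, for instance $p_0=(1+p)/2$ and $q_0=2q$. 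Then the $L^{p_0}$ and $L^{q_0}$ bounds $\sqrt3(p_0^*-1)$ and $\sqrt3(q_0^*-1)$ are comparable to $\max(p^*-1,q^*-1)$ up to absolute factors. None of these quantities depends on $d$, so $K$ is explicit and dimension-free. For vector-valued (matrix-valued) functions we apply the theorem with the Frobenius norm, which is harmless since the interpolation theorem is stated for Banach-valued or pointwise-normed functions.

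Third, the modular version. We use the standard trick that for $\Phi\in\Delta_2\cap\nabla_2$ a norm inequality at all scales implies $\int\Phi(|\mathcal A u|)\le C\int\Phi(C'|\mathcal E u|)$. Alternatively, we use the modular form of the interpolation theorem in~\cite{RieszThorinOrlicz}, which is proved by a Marcinkiewicz-type decomposition $f=f\mathds 1_{|f|>t}+f\mathds 1_{|f|\le t}$ integrated against $\Phi'$. The main obstacle is the first step: ensuring that the constant in the relevant operator's $L^p$ norm is dimension-free on the whole space rather than just on the range of $\mathcal E$. We would first try to verify this via the Burkholder representation, where the subordination argument does not use the constraint that $f$ is a symmetric gradient.
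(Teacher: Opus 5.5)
Your proposal is correct and is essentially the paper's proof. The paper applies the Orlicz Riesz--Thorin theorem of \cite{RieszThorinOrlicz} to exactly your multiplier $T$, namely $f\mapsto (R\otimes R)f-((R\otimes R)f)^t$ on $L^p(\R^d,\mathrm{Sym}(d))$. The step you flag as the main obstacle is already settled: the upper bound in Theorem~\ref{thm:Korn} is proved for arbitrary $\mathrm{Sym}(d)$-valued $f$, because Lemma~\ref{lemma:linalg1} only uses the symmetry of $\partial_iUf_{j,k}$ in $(j,k)$, so the projection fallback and its uncontrolled extra Calder\'on--Zygmund norm are never needed.

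The only other difference is cosmetic. The paper lets the endpoint exponents tend to $i_\Phi$ and $s_\Phi$, using that the interpolation constant stays below $4$; this gives the clean factor $\sqrt3K\max(p^*-1,q^*-1)$. Your fixed choice $p_0=(1+p)/2$, $q_0=2q$ costs an additional absolute factor.
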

Finally, combining our bounds with
recent results from~\cite{Petermichl}
on weighted estimates for
differentially subordinate martingales,
we obtain an $A_p$-weighted version of our estimates
(see~\cite{Kilpelainen1994} for more
information about weighted Sobolev spaces):
\begin{theorem}\label{thm:Kornweighted}
Let $p\in (1,\infty)$, $w\in A_p(\R^d)$ and 
$u\in \dot{W}^{1,p}(w\diff x, \R^d,\R^d)$.
Then
\begin{equation*}
    \|\mathcal A(u)\|_{L^p(w\diff x)}
    \le K (p^*-1)
    [w]_{A_p^\text{\upshape Heat}}^
    {\max\left(1,\frac1{p-1}\right)}
    \|\mathcal E(u)\|_{L^p(w\diff x)},
\end{equation*}
where $K$ is a constant that can be explicitly estimated
and in particular is independent of $d$.
\end{theorem}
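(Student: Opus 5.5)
The plan is to write $\mathcal A(u)$ as a Fourier multiplier applied to $\mathcal E(u)$ and to represent that multiplier through the heat-extension martingale transforms of Bañuelos--Méndez-Hernández and Petermichl--Volberg, following the same route that gives the unweighted bound in Theorem~\ref{thm:Korn}. The weighted estimate then comes from the weighted Burkholder-type inequality for differentially subordinate martingales of~\cite{Petermichl}.

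\textbf{Step 1: Fourier multiplier formula.} For $u$ smooth with compact support, the Fourier transform of $\partial_j u_i$ is $i\xi_j\hat u_i$, and $\Delta u_i = 2\sum_j \partial_j \mathcal E_{ij}(u) - \partial_i \operatorname{div} u$. Using $\operatorname{div} u=\operatorname{Tr}\mathcal E(u)$, we get
\[
\partial_k u_i = R_k\Bigl(2\sum_j R_j \mathcal E_{ij}(u) - R_i\operatorname{Tr}\mathcal E(u)\Bigr),
\]
up to sign conventions, where $R_j$ are the Riesz transforms. Hence $\mathcal A(u)=T(\mathcal E(u))$, where $T$ is a linear combination of second-order Riesz transforms $R_kR_j$ acting on the matrix field $\mathcal E(u)$. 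The unweighted proof writes $T$, after a suitable normalisation, as an average
\[
T = c\,\mathbb E_\theta\bigl[\mathcal S_{A_\theta}\bigr],
\]
where $\mathcal S_{A}$ is the conditional expectation of the martingale transform $\int A\nabla_{x}U_f(B_t,\tau-t)\cdot dB_t$, $U_f$ is the heat extension of $f$, and each matrix $A_\theta$ has operator norm at most $1$ (after the factor $c$). The $\sqrt3$ in Theorem~\ref{thm:Korn} comes precisely from this representation. I will take it over unchanged, so that
\[
\|T f\|\le \sqrt 3\,\sup_\theta\|\mathcal S_{A_\theta}f\| .
\]

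\textbf{Step 2: weighted martingale estimate.} For each $\theta$, the transformed martingale $Y=\int A_\theta\nabla U_f\cdot dB$ is differentially subordinate to $X=\int \nabla U_f\cdot dB$, because $\|A_\theta\|\le1$; for the matrix-valued $f$ this is applied entrywise, or in Hilbert-space form. The result of~\cite{Petermichl} gives
\[
\|Y\|_{L^p(w)}\lesssim (p^*-1)\,[w]_{A_p^{\mathrm{Heat}}}^{\max(1,1/(p-1))}\,\|X\|_{L^p(w)},
\]
with the weight transported along the heat-flow martingale. Projecting back by conditional expectation is a contraction in $L^p(w\,\diff x)$, which I will check via duality with $w^{1-p'}$, exactly as in~\cite{Petermichl}. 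Averaging over $\theta$ by Minkowski's inequality then gives the bound for $T$ with constant $K(p^*-1)[w]^{\max(1,1/(p-1))}$, where $K$ absorbs $\sqrt3$, the universal constant of~\cite{Petermichl}, and the Hilbert-space-valued extension. None of these depends on $d$.

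\textbf{Step 3: density.} Extend from $C_c^\infty$ to $\dot W^{1,p}(w\diff x)$ by density; for $A_p$ weights, smooth compactly supported maps are dense modulo constants, see~\cite{Kilpelainen1994}. Both sides are continuous in the weighted norm, so the inequality passes to the limit.

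The main obstacle is Step 2, namely that the weighted inequality of~\cite{Petermichl} must apply to the vector- or matrix-valued martingales produced here, with a constant independent of the dimension of the target space. If it is stated only for scalar or real-valued martingales, I would first try to verify that its Bellman-function proof uses only Hilbert-space structure, as Burkholder's argument does. Failing that, I would decompose along the averaging in Step 1, so that each piece is a scalar transform applied to the components; this must be done without summing a number of terms that grows with $d$.
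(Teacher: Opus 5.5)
Your architecture matches the paper's: a space-time Brownian motion representation, the weighted differential-subordination bound of~\cite{Petermichl} in place of Burkholder's, conditional Jensen, and density. The problem is Step~1, which misdescribes the representation you import, and that is where dimension-independence is decided.

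The paper has no average $c\,\mathbb E_\theta[\mathcal S_{A_\theta}]$ of transforms $\int A_\theta\nabla U_f\cdot\diff\mathcal B$ with contractions $A_\theta$ acting on the derivative direction. Averaging over directions is the method-of-rotations mechanism, which in the paper only gives $\sqrt d\,H_p(2)$ (Proposition~\ref{prop:rotationbound}). It is also unclear how such scalar transforms, applied entrywise, could produce the index mixing in $(R\otimes R f)_{i,l}=\sum_jR_iR_jf_{j,l}$ with a $d$-free constant. The paper instead builds a \emph{single} matrix-valued martingale in which the Brownian index is contracted with a matrix index of $f$ rather than with the derivative index: $(\mathcal Z^T_t)_{i,j}=\sum_k\int_0^t\partial_iUf_{k,j}(Y^T_s)\,\diff\mathcal B^k_s$. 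Orthogonality of the coordinates of $\mathcal B$ then gives $\diff\langle\mathcal Z^T\rangle=|\nabla Uf(Y^T_t)|^2\,\diff t=\diff\langle M^T\rangle$.

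The $\sqrt3$ of Theorem~\ref{thm:Korn} is not produced by an averaging. It is the norm of $a\mapsto a-\sigma a$ restricted to $\R^d\otimes\mathrm{Sym}(d)$ (Lemma~\ref{lemma:linalg1}), applied to the analogous martingale for $\mathcal A(u)$; on all $3$-tensors that norm is $2$. The paper's weighted proof does not even use it: it works with the $R\otimes R$ martingale, which has subordination constant $1$, and then applies~\eqref{eq:RieszKorn1}. Once you use this single Hilbert-space-valued martingale, your \emph{main obstacle} disappears, because Theorem~\ref{thm:burkholderweighted} is stated for Hilbert-space-valued martingales with an absolute constant. Drop the fallback of splitting into scalar pieces, since that is exactly what reintroduces $d$.

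Step~2 also skips the conversion from the martingale characteristic to the heat characteristic, which the paper treats as a separate lemma. Set $W_t=Uw(Y^T_t)$. The Markov property gives $\mathbb E\bigl(w(\mathcal B_T)^{-1/(p-1)}\mid\mathcal F_t\bigr)=U\bigl(w^{-1/(p-1)}\bigr)(Y^T_t)$, hence $[W]_{A_p^{\mathrm{mart}}}\le[w]_{A_p^{\mathrm{Heat}}}$. With Lebesgue initial law, however, $W$ need not be uniformly integrable, since $w\notin L^1$ in general, so it is not an admissible martingale weight. The paper therefore localises the starting distribution to $[-n,n]^d$ and passes to the limit in $n$ and then $T$, using the decay of $Uf$ for $f\in C^\infty_c$.

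Finally, returning to functions needs no duality with $w^{1-p'}$. Since $W_T=w(\mathcal B_T)$ is measurable with respect to $\sigma(Y^T_T)$ and $\mathcal B_T$ has Lebesgue law, conditional Jensen gives directly
$\int_{\R^d}\bigl|\mathbb E(\mathcal Z^T_T\mid Y^T_T=(x,0))\bigr|^p w(x)\,\diff x\le\mathbb E\bigl(|\mathcal Z^T_T|^pW_T\bigr)$.
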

Here $[\cdot]_{A_p^{\text{\upshape Heat}}}$ denotes
a quantity equivalent to the usual
Muckenhoupt characteristic
that will be defined below.\par
These upper bounds rest on new estimates for $R\otimes 
R$, a matrix-valued analogue of the second-order 
Riesz transform.
Although the improved Korn inequality
is of independent interest,
it also yields stronger
coercivity estimates when $p^*-1$ is sufficiently large,
and hence new existence results
for related variational problems.\par
The inequality admits several other forms
on bounded domains with
boundary conditions; see, for instance~\cite{payne1961korn}.
We do not discuss those variants here,
although our methods should also lead to improved
estimates in those settings.\par
\subsection{Preliminaries}
We begin by summarising the notation
used throughout the paper.
By $\widehat f$ and $\mathcal F f$, we
denote the Fourier transform of
the tempered distribution $f$, defined for $f\in L^1(\R^d)$
as
\begin{equation*}
\widehat f(\xi)
\mathrel{:}=
\int_{\R^d} f(x)\exp(-2\pi i \langle x,\xi\rangle)
\diff x.
\end{equation*}
We equip the space of $d\times d$ matrices over
$\mathbb{R}$,
$M_d(\mathbb{R})$, with the Frobenius inner product, 
defined as 
$\langle A,B\rangle=\mathrm{Tr}(AB^t)$.
The associated norm, the Frobenius norm,
will be denoted by $|A|$.
More generally, tensor products of 
Euclidean spaces
$\mathbb{R}^{d_1}\otimes \mathbb{R}^{d_2}
\otimes \dots \otimes\mathbb{R}^{d_n}$
will always be endowed with the induced Frobenius norm 
\begin{equation*}
|a|^2=\sum_{1\le i_1\le d_1}\dots
\sum_{1\le i_n\le d_n}|a_{i_1,\dots, i_n}|^2.
\end{equation*}
Given a subspace $X$ of $M_d(\R)$,
we write $P_X$ for the orthogonal
projection onto $X$. $\mathrm{Sym}(d)$
will denote the subspace of symmetric 
matrices in $M_d(\R)$, $\mathrm{Sym}_0(d)
$ that of trace-free symmetric matrices
and $\mathrm{Skew}(d)$ that of skew 
matrices. We will also need the following family of 
angle-preserving matrices:
\begin{equation*}
\mathrm{CO}(d)=\mathbb R\mathrm{O}(d)
=\{\lambda O:\lambda\in \R,\ O\in \mathrm{O}(d)\}
\end{equation*}
and its subsets $\mathrm{CO}^+(d)\mathrel{:}=
\mathrm{CO}(d)\cap \{A:\det(A)\ge 0\}$
and $\mathrm{CO}^-(d)\mathrel{:}=
\mathrm{CO}(d)\cap \{A:\det(A)\le 0\}$,
respectively the sets
of conformal and anti-conformal matrices.\par
We shall use several function spaces which we now define;
all of them will be understood to be defined
up to null sets.
$W^{1,p}(\Omega, \mathbb{R}^d)$ is 
the space of vector-valued Sobolev functions,
which we equip with the norm
\begin{equation*}
\|f\|_{W^{1,p}}^p=\int_{\Omega}|f(x)|^p\diff x
+\int_{\Omega}|\nabla f(x)|^p\diff x
\end{equation*}
with the usual modification for $p=\infty$.\par
The homogeneous Sobolev space
$\dot{W}^{1,p}(\Omega, \mathbb{R}^d)$ is 
defined as the space of $\R^d$-valued locally integrable
functions $f$
whose (weak) derivatives $\nabla f$ belong to
$L^p(\Omega, M_d(\mathbb{R}))$ and it is equipped with its (semi-)norm
\begin{equation*}
\|f\|_{\dot{W}^{1,p}}=
\left(\int_{\Omega}|\nabla f(x)|^p\diff x\right)
^{\frac 1p}.
\end{equation*}
Orlicz spaces are defined as follows:
given a Young function
$\Phi$ and a separable Hilbert space $X$, 
\begin{equation*}
L^\Phi(\Omega, X)
\mathrel{:}=
\left\{f\colon\Omega\to X\text{ measurable such that }
\exists K>0
\int_\Omega \Phi(K|f(x)|)\mathrm{d}x<\infty\right\},
\end{equation*}
which we consider equipped with the Luxemburg norm,
\begin{equation*}
\|f\|_{L^\Phi}\mathrel{:}=
\inf\left\{k \in (0,+\infty)\colon
\int_\Omega \Phi(|f(x)|/k)\mathrm{d}x<1 \right\}.
\end{equation*}
We refer the reader to \cite{analysisBS}
for more information on vector-valued
integration.
Orlicz-Sobolev spaces
are then defined in the natural way:
\begin{equation*}
W^1L^\Phi(\Omega, \R^d)
\mathrel{:}=
\left\{f\colon f\in L^\Phi(\Omega, \R^d),
\nabla f\in L^\Phi(\Omega, M_d(\R))\right\}
\end{equation*}
with the natural norm
\begin{equation*}
\|f\|_{W^1L^\Phi}\mathrel{:}=\|f\|_{L^\Phi}+
\| |\nabla f|\|_{L^\Phi},
\end{equation*} 
and similarly for homogeneous Orlicz-Sobolev spaces.
The weighted variants of the above Sobolev spaces are
obtained by simply substituting
$w(x)\diff x$ for $\diff x$.\par
Finally, $B_V(x_0,r)$ denotes the open ball 
centred at $x_0$ and having radius $r$ in a normed 
space $V$, $\mathbb S^{d-1}$
the unit sphere in $\R^d$ and $\omega_{d-1}$
its surface area.
\section{The setup and a first estimate}
The first step in our proof
is to relate Korn's inequality to 
a singular integral operator, as is often
done in the literature. The connection
we are interested in is given
by the following proposition.
\begin{proposition}
Let $R_j$ denote the Riesz transform in
the $j$-th direction and $R$ the Riesz transform vector.
Then
\begin{equation}\label{eq:RieszKorn1}
\mathcal A(u)=(R\otimes R)\mathcal E(u)
-((R\otimes R)(\mathcal E(u)))^t, 
\end{equation}
where the matrix operator $R\otimes R$ acts
on a matrix-valued function
$F\colon\mathbb{R}^d\to M_d(\mathbb{R})$ by the rule
\begin{equation}\label{eq:RieszKorndef}
((R\otimes R)(F))_{i,l}=\sum_j R_iR_jF_{j,l}.
\end{equation}
Similarly, if we consider the trace-free part of 
$\mathcal E(u)$, which we denote by $\mathcal E_0(u)$,
and $\mathcal A_0(u)$ the remaining part
of the gradient, i.e.
$\mathcal A_0(u)=\mathcal A(u)
+P_{\mathrm{span}(\mathrm{Id})}(\nabla u)$,
we have
\begin{equation}\label{eq:RieszKornTrace}
    2P_{\mathrm{Skew}} (R\otimes R (\mathcal E_0(u)))-
    \frac{d}{d-1}P_{\mathrm{span}(\mathrm{Id})}
    (R\otimes R(\mathcal E_0(u)))=\mathcal A_0(u)
\end{equation}
where $P_X$ denotes the orthogonal projection
on the subspace $X$ of $M_d(\R)$.
\end{proposition}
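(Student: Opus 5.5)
The plan is to verify both identities on the Fourier side, where every operator involved becomes multiplication by an explicit matrix-valued symbol. By density it suffices to take $u$ Schwartz (or with $\widehat u$ smooth and compactly supported away from $0$). The identities then extend to $\dot W^{1,p}$ by the $L^p$-boundedness of the Riesz transforms. Throughout I use the convention $(\nabla u)_{i,l}=\partial_l u_i$ and the Riesz transform symbol $\widehat{R_j f}(\xi)=-i\frac{\xi_j}{|\xi|}\widehat f(\xi)$. With these, $R_iR_j$ has symbol $-\xi_i\xi_j/|\xi|^2$, and
\[
\widehat{\mathcal E(u)}_{j,l}=\pi i\,(\xi_l\widehat u_j+\xi_j\widehat u_l).
\]
A different sign convention for $R_j$ does not matter, since only products $R_iR_j$ appear.

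\textbf{Step 1: the skew identity \eqref{eq:RieszKorn1}.} I compute $M:=(R\otimes R)\mathcal E(u)$ from \eqref{eq:RieszKorndef} by summing over $j$:
\[
\widehat M_{i,l}=-\frac{\xi_i}{|\xi|^2}\sum_j\xi_j\,\pi i(\xi_l\widehat u_j+\xi_j\widehat u_l)
=-\pi i\Big(\frac{\xi_i\xi_l\,\langle\xi,\widehat u\rangle}{|\xi|^2}+\xi_i\widehat u_l\Big).
\]
The first term is symmetric in $(i,l)$, so it cancels in $M-M^t$. What remains is $\pi i(\xi_l\widehat u_i-\xi_i\widehat u_l)$, which is exactly the symbol of $\tfrac12(\partial_lu_i-\partial_iu_l)=\mathcal A(u)_{i,l}$.

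\textbf{Step 2: the trace-free identity \eqref{eq:RieszKornTrace}.} I write $\mathcal E_0(u)=\mathcal E(u)-\frac{\operatorname{div}u}{d}\mathrm{Id}$. For a scalar $g$, the matrix $(R\otimes R)(g\,\mathrm{Id})$ has entries $R_iR_lg$, so it is symmetric. Hence
\[
2P_{\mathrm{Skew}}\big((R\otimes R)\mathcal E_0(u)\big)=M-M^t=\mathcal A(u)
\]
by Step 1. For the identity component I use $P_{\mathrm{span}(\mathrm{Id})}X=\frac{\mathrm{Tr}X}{d}\mathrm{Id}$ together with two trace computations.
\begin{itemize}
\item From the formula above, $\mathrm{Tr}\,\widehat M=-\pi i\cdot 2\langle\xi,\widehat u\rangle$. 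This says $\mathrm{Tr}M=-\operatorname{div}u$.
\item Since $\sum_i R_i^2=-\mathrm{Id}$, we get $\mathrm{Tr}\,(R\otimes R)(g\,\mathrm{Id})=-g$.
\end{itemize}
Combining these with $g=\operatorname{div}u/d$ gives $\mathrm{Tr}\,(R\otimes R)\mathcal E_0(u)=-\frac{d-1}{d}\operatorname{div}u$. Therefore
\[
-\tfrac{d}{d-1}P_{\mathrm{span}(\mathrm{Id})}\big((R\otimes R)\mathcal E_0(u)\big)=\tfrac{\operatorname{div}u}{d}\mathrm{Id}=P_{\mathrm{span}(\mathrm{Id})}(\nabla u).
\]
Adding this to $\mathcal A(u)$ gives $\mathcal A_0(u)$, as claimed.

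\textbf{Main obstacle.} None of this is deep. The only real risk is bookkeeping: the index convention for $\nabla u$ and the sign of the symbol of $R_j$ must be fixed consistently. The check I rely on is that the cancellation in Step 1 comes from the symmetric part of $M$ and does not depend on any global sign. The density and extension argument is routine.
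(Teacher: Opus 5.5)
Your proposal is correct and follows the paper's own route: you compute both sides on the Fourier side and observe that the term $\xi_i\xi_l\langle \xi,\widehat u\rangle/|\xi|^2$ in the symbol of $(R\otimes R)\mathcal E(u)$ is symmetric, so it cancels in $M-M^t$. For the trace-free identity, which the paper only says is ``proved similarly'', your reduction to the first identity is correct and supplies exactly the omitted details: you use the symmetry of $(R\otimes R)(g\,\mathrm{Id})$ together with the trace computations $\mathrm{Tr}\,M=-\mathrm{div}\,u$ and $\mathrm{Tr}\,(R\otimes R)(g\,\mathrm{Id})=-g$.
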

Variants of this result have of course been known
for a long time.
We include the calculation here for completeness.
A slightly different approach can be found
in~\cite{spector}.
\begin{proof}
Taking the Fourier transform of the left-hand side
of~\eqref{eq:RieszKorn1} we get
\begin{equation*}
    2\pi\frac i2
    \left(
        \widehat u\otimes \xi^t-\xi\otimes \widehat u^t
    \right)
\end{equation*}
since
\begin{equation*}
    \mathcal F(\nabla u)=2\pi \widehat u\otimes i\xi^t.
\end{equation*}
To calculate the Fourier transform
of the right-hand side
of~\eqref{eq:RieszKorn1}, first notice that
\begin{align*}
\mathcal F(R\otimes R (\mathcal E(u)))
&=
-\frac1{|\xi|^2}\xi\otimes \xi^t
\widehat{\mathcal E(u)}
=
-\frac1{|\xi|^2}\xi\otimes \xi^t
\left(
2\pi \frac i2
\left(
\xi\otimes \widehat u^t 
+\widehat u\otimes \xi^t
\right)
\right)
\\&=
-2\pi \frac i2\xi\otimes \widehat u^t-
2\pi \frac{i}{2{|\xi|^2}}
\xi\otimes \xi^t\langle \xi, \widehat u\rangle
\end{align*}
Inserting this into the right-hand side
of~\eqref{eq:RieszKorn1} proves
it. The identity~\eqref{eq:RieszKornTrace} is proved similarly.
\end{proof}
It follows from the previous 
proposition that to 
obtain our upper bound a viable strategy is to try
and bound from above $\|P_{\mathrm{Skew}(d)}R\otimes R\|_{
L^p(\mathbb R^d, \mathrm{Sym}(d))
\to L^p(\R^d, \mathrm{Skew}(d))}.$
This problem resembles closely the study
of norm bounds for the Beurling-Ahlfors operator
that for $d=2$ is defined as a
principal value integral by
\begin{equation}
    \label{eq:BA}
T_{BA}f(z)\mathrel{:}=\frac1{2\pi i}\int_{\C}
\frac{f(\zeta)}{(z-\zeta)^2}\diff \zeta
\wedge \diff \bar \zeta.
\end{equation}
For the $d$-dimensional definition,
we refer the reader to~\cite{IM}. 
In $2$ dimensions, the problem of calculating
the norm of $T_{BA}$ has been studied since the 1990s.
The problem was first introduced by Iwaniec and 
Martin, who proved in~\cite{IM}
\footnote{The result $\|T_{BA}\|_{L^p}\ge p^*-1$
was proved by Lehto in~\cite{Lehto_1966}.} that
$T_{BA}$, considered as an operator on $L^p$, 
satisfies
\begin{equation*}
p^*-1\le \|T_{BA}\|_{L^p\to L^p}\le 4 (p^*-1)
\end{equation*}
and conjectured that $\|T_{BA}\|=p^*-1$.
The result was subsequently improved by many authors
and at the time of writing
the best upper bound
currently known,
due to~\cite{Banuelos2008}, is 
$\|T_{BA}\|_{L^p}\le C (p^*-1)$
with $C\approx 1.575$
(see also~\cite{Volberg} 
for improved asymptotic bounds).
In higher dimensions, the problem has proved
more complicated and in that case the best bound,
obtained by Hytönen in~\cite{Hytonen},
is $(d+1)(p^*-1)/2$ and
in particular it is not dimension-free.
It might then come as a surprise that
a dimension-free bound for $R\otimes R$
can be achieved. We first
prove a slightly weaker result utilising
a method of rotations introduced by 
Iwaniec and Martin in \cite{IM}.
\begin{proposition}
\label{prop:rotationbound}
For $u\in \dot W^{1,p}(\R^d,\R^d)$
we have
\begin{equation}
\|\mathcal A(u)\|_{L^p}\le 
\sqrt d H_p(2)\|\mathcal E(u)\|_{L^p},
\end{equation}
where $H_p(2)=\|T_{BA}\|_{L^p(\C)\to L^p(\C)}$
and where $T_{BA}$ is defined in \eqref{eq:BA}.
\end{proposition}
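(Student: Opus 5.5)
The plan is to reduce the $d$-dimensional operator $E\mapsto P_{\mathrm{Skew}}(R\otimes R)E-\bigl(P_{\mathrm{Skew}}(R\otimes R)E\bigr)^t$ from \eqref{eq:RieszKorn1} to two-dimensional operators acting in $2$-planes, using the method of rotations of Iwaniec--Martin~\cite{IM}. Each planar operator is then controlled by $H_p(2)$.

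\textbf{Step 1: the two-dimensional case.} For $d=2$, identify $\mathrm{Sym}_0(2)$ and $\mathrm{Skew}(2)\oplus\mathrm{span}(\mathrm{Id})$ with $\C$. In these coordinates the multiplier
\[
\widehat E\mapsto \frac{1}{|\xi|^2}\bigl(\xi\xi^t\widehat E-\widehat E\,\xi\xi^t\bigr)
\]
becomes a real-linear combination of the Beurling multiplier $\bar\xi/\xi$, its conjugate and its real part. The trace part of $E$ contributes nothing to $\mathcal A$ after the antisymmetrisation; I will check this from the formula in the proof of the proposition above. Using this, I will show directly that in the plane $\|\mathcal A(u)\|_{L^p}\le H_p(2)\|\mathcal E(u)\|_{L^p}$. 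Concretely, this amounts to showing that $A_{12}$ is $\mathrm{Im}$ of $T_{BA}$ applied to the complex function $E_{11}-E_{22}+2iE_{12}$, up to constants. The Frobenius norms then match up to the factors $\sqrt2$ on both sides, so they cancel.

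\textbf{Step 2: rotation formula.} The kernel of $R_iR_j$ is even and homogeneous of degree $-d$, plus a multiple of $\delta_0$. Following~\cite{IM}, I will write the $d$-dimensional multiplier
\[
m(\xi)=\frac{\xi\otimes\xi}{|\xi|^2}\ \text{acting as in \eqref{eq:RieszKorn1}}
\]
as an average over the Grassmannian of $2$-planes $V$ (equivalently over $\mathrm{O}(d)$) of the planar multipliers $m_V(P_V\xi)$. Here $m_V(P_V\xi)$ acts on the $V\times V$ block of $E$ and is extended to $\R^d$ by acting fibrewise in the directions of $V^\perp$. The identity I aim for is, for a suitable dimensional constant $c_d$,
\[
\mathcal A=c_d\int_{\mathrm{O}(d)}O^t\,\bigl(\mathcal K_{V_0}\,(OEO^t)\bigr)\,O\,\diff O,
\]
where $\mathcal K_{V_0}$ is the planar Korn operator in the coordinate plane $V_0$ and it acts only on the $V_0$-block. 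I will verify this on the Fourier side by integrating the planar multiplier over rotations. Fubini over the fibres $V^\perp$, together with Step 1, gives
\[
\|\mathcal K_V(E)\|_{L^p(\R^d)}\le H_p(2)\,\|P_VEP_V\|_{L^p(\R^d)}.
\]

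\textbf{Step 3: norm accounting.} The main obstacle, and the source of $\sqrt d$, is bounding the average. Minkowski's inequality reduces matters to comparing $c_d\,\|P_VEP_V\|$ with $|E|$. Alternatively, I can decompose over coordinate pairs $(i,j)$ and use Cauchy--Schwarz. In either form, the obstruction is that the Frobenius norms of the $2\times2$ blocks do not sum to $|E|^2$ with a dimension-free constant: each diagonal entry appears in $d-1$ blocks. I will first try the coordinate version, which bounds
\[
|A_{ij}|\lesssim \Bigl|\textstyle\sum_k(\cdot)\Bigr|
\]
via planar pieces, and then sum. If the constants are wasteful, I will fall back on the rotation average, estimating $c_d\int|P_VEP_V|\le\sqrt d\,|E|$ by Cauchy--Schwarz together with the identity $\int|P_VEP_V|^2\,\diff V=\frac{2}{d(d+2)}\bigl(|E|^2+\ldots\bigr)$, a standard second-moment computation over the Grassmannian. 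Combining this with Step 2 yields $\|\mathcal A(u)\|_{L^p}\le\sqrt d\,H_p(2)\|\mathcal E(u)\|_{L^p}$.
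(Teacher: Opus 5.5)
\textbf{The proposal has a genuine gap in Step 3.}

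Steps 1 and 2 are sound. In the plane, the $\mathrm{Sym}\to\mathrm{Skew}$ operator is, up to factors of $\sqrt2$, $\Im T_{BA}$ applied to $E_{11}-E_{22}+2iE_{12}$. So the fibrewise planar operators have norm at most $H_p(2)$. Averaging them over $\mathrm{O}(d)$ does reproduce $\mathcal A$, with $c_d=\binom{d}{2}$ for the Haar probability measure: testing the averaged multiplier on $\widehat E=e\otimes f+f\otimes e$, with $e=\xi/|\xi|\perp f$, gives $\mathbb E_V|P_Ve\wedge P_Vf|^2=\binom{d}{2}^{-1}$.

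The problem in Step 3 is not wasteful constants. After Minkowski you are left with $c_dH_p(2)\int_V\|P_VEP_V\|_{L^p}\diff V$. For $E=\phi\,e_1\otimes e_1$ this is exactly $c_d\cdot\frac2d\,H_p(2)\|E\|_{L^p}=(d-1)H_p(2)\|E\|_{L^p}$, because $|P_VEP_V|=|P_Ve_1|^2|\phi|$ and $\int_V|P_Ve_1|^2\diff V=2/d$. Keeping only the in-plane trace-free part, which is all the planar operator sees, still gives $(d-1)/\sqrt2$. Such $E$ are $L^p$-limits of symmetrised gradients: take $u=\psi(x_1)\chi(x')e_1$ with $\psi$ oscillating rapidly. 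Hence no accounting after Minkowski can yield $\sqrt d$; this route is capped at order $d$.

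In particular, your pointwise claim $c_d\int_V|P_VEP_V|\diff V\le\sqrt d\,|E|$ is false for every $d\ge3$. Your second-moment identity, once multiplied by $c_d^2\sim d^4/4$, also gives order $d$ after the square root. Moreover, Minkowski produces an average of $L^p$ norms, which dominates the $L^p$ norm of the pointwise average, so even a true pointwise bound would not close the argument. Taking norms plane by plane discards the orthogonality between contributions of different planes.

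The coordinate-pair variant does not help either. On $\R^d$, $R_iR_j$ has multiplier $\xi_i\xi_j/|\xi|^2$, which is not a planar multiplier in $(\xi_i,\xi_j)$. So $\mathcal A_{ij}$ does not split into planar pieces at all.

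What is missing is an $\ell^2$-type accounting over directions, obtained by duality. The paper gets this from the polarized method of complex rotations of Iwaniec--Martin. On $\C^d$ the kernel of $R\otimes R$ is $\frac{\Gamma(d+1)}{\pi^d}\,\zeta\otimes\zeta^t/|\zeta|^{2d+2}$. Each direction $\zeta\in\mathbb S^{2d-1}$ therefore contributes a scalar Beurling--Ahlfors-type operator times the constant left multiplication $M_{\zeta\otimes\zeta^t}$. One moves $M_{\zeta\otimes\zeta^t}$ onto the dual test function and applies Cauchy--Schwarz inside the integral over $\mathbb S^{2d-1}$. The dimension then enters only through $\sqrt N$, where $N=\|\int_{\mathbb S^{2d-1}}M_{\zeta\otimes\zeta^t}M_{\zeta\otimes\zeta^t}^\dagger\diff\zeta\|=\omega_{2d-1}/d$, and this gives $\sqrt d\,H_p(2)$.

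If you want to keep real $2$-planes, you must replace Minkowski by the analogous dual step. Pair with a skew-valued test field $G$ and apply Hölder over the Grassmannian, so that the smallness of $P_VGP_V$ is used together with that of $P_VEP_V$.
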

\begin{proof}
We only consider the case $p\in [2,\infty)$.
The proof works very similarly in the dual case,
the only difference being the change in $N$,
defined below at~\eqref{N}.
Clearly, it suffices to prove that $R\otimes R $, 
when considered as an operator from $L^p(\C^d, M_d(\C))$
into itself,
is a bounded operator with norm bounded
by $\sqrt d H_p(2)$.
To see this, we first observe that
the operator $T\mathrel{:}=R\otimes R$ can
be written as
\begin{equation*}
T A(z)=
\frac{\Gamma(d+1)}{\pi^d}\int_{\C^d}
\frac{\zeta\otimes \zeta^t A(z-\zeta)}{|\zeta|^{2d+2}}
\text{d} \zeta
\end{equation*}
(see~\cite{IM}, section $2$ for a proof).
We can thus apply a suitable version of the
polarized method of complex rotations
such as the one in~\cite{IM}, proposition 7.1 to obtain
\begin{equation*}
\|T\|_{L^p(\C^d, M_d(\C))\to L^p(\C^d, M_d(\C))}
\le \frac{H_p(2)}{2}
\sqrt{\omega_{2d-1}}\frac{\Gamma(d+1)}{\pi^d}\sqrt N,
\end{equation*}
where\footnote{For $p\in (1,2)$,
the expression for $N$ remains
the same except for the order of
$M$ and $M^\dagger$.}
\begin{equation}\label{N}
N\mathrel{:}=
\left\|
\int_{\mathbb S^{2d-1}} M_{\zeta\otimes \zeta^t}
M_{\zeta\otimes \zeta^t}^\dagger\text{d}\zeta
\right\|_{\mathcal L(M_d(\C), M_d(\C))}
\end{equation}
and $M_A$ denotes the operator on $M_d(\C)$
defined as $M_A(B)\mathrel{:}=AB$.
To calculate $N$, we first notice
that $M_A^\dagger=M_{A^\dagger}$
and that $M_AM_B=M_{AB}$,
thus the result of the integration
will be the operator $M_C$ with 
\begin{equation*}
C\mathrel{:}=
\int_{\mathbb{S}^{2d-1}}\zeta\otimes
\zeta^t(\zeta\otimes \zeta^t)^\dagger
\text{d}\zeta=
\int_{\mathbb{S}^{2d-1}}
\zeta\otimes\zeta^t(\zeta^t)^\dagger\otimes \zeta^\dagger
\text{d}\zeta=\int_{\mathbb{S}^{2d-1}}
\zeta\otimes \zeta^\dagger\text{d}\zeta.
\end{equation*}
The right-hand side can be written component-wise as
\begin{equation*}
C_i^j=\int_{\mathbb{S}^{2d-1}}
\zeta_i\bar{\zeta_j}\text{d}\zeta,
\end{equation*}
which is $0$ unless $i=j$, in which case we get
\begin{equation*}
C_i^i=\int_{\mathbb{S}^{2d-1}}
|\zeta_i|^2\text{d}\zeta
=\omega_{2d-1}\frac{1}{d}.
\end{equation*}
Thus $C=1/d\omega_{2d-1}\mathrm{Id}$, so 
\begin{equation*}
N=\|M_C\|=\frac{1}{d}\omega_{2d-1}\|M_{\mathrm{Id}}\|=
\frac{1}{d}\omega_{2d-1}.
\end{equation*}
Thus
\begin{equation*}
\|T\|\le \frac{H_p(2)}2\frac{d!}{\pi ^d}\omega_{2d-1}
\frac{1}{\sqrt{d}}=\frac{H_p(2)}2
\frac{d!}{\pi^d}\frac{2\pi^d}{(d-1)!}\frac{1}{\sqrt{d}}
\le \sqrt{d}H_p(2)
\end{equation*}
\end{proof}
\begin{remark}
The dimension-dependence in this estimate is
a consequence of the matricial nature of our operator $T$.
Indeed, if instead one considers the operator
$R\otimes R$ mapping $L^p(\C^d,\C)$
into $L^p(\C^d, M_d(\C))$,
using the method described above one gets
a bound of the form $\lesssim H_p(2)$ (see~\cite{IM}).
\end{remark}
\section{The toolkit: Burkholder's method and calculus of 
variations}\label{sec:prereq}
While the method of complex rotations
is usually very efficient, it often does not
lead to the best constants. Again taking inspiration
from the Beurling-Ahlfors case,
in order to improve the estimates one usually turns
to Bellman functions
(see~\cite{Vasyunin_Volberg_2020}) or 
Burkholder's estimates (see~\cite{Banuelos}).
We will follow
the second approach; we refer the reader to Remark
\ref{rmk:bellmanisotropic} for more on a potential 
Bellman function-based approach.
For more on the stochastic background,
see~\cite{EvansSDE} and references therein.
We will in the remainder of this work make extensive use of 
several results concerning Burkholder's theory of differential 
subordination. Since readers coming from the calculus of 
variations may be less familiar with this theory, we briefly
review the necessary background. Excellent overviews of the
theory, which do it much more justice than we can here, are also 
given in the introductory Chapter
of~\cite{osekowski} and in~\cite{Banuelos}.
\subsection{A brief overview of Burkholder's theory}    
\begin{definition}[Differentially subordinate martingales]
Let $(X,\Sigma, \mu)$ be a $\sigma$-finite measure space
and let $(\mathcal F_n)_{n\in \N}$ be a filtration such that
$\mu$ is $\sigma$-finite on $\mathcal F_0$.
Let $H$ be a separable Hilbert space and let
$(X_n)_{n\in \N},(Y_n)_{n\in \N}$ be two 
$H$-valued martingales.
We say that $X$ is $p$-differentially subordinate
to $Y$ if $|X_0|\le (p^*-1)|Y_0|$ and
$|dX_n|^2\le |dY_n|^2$ almost everywhere
 for all $n\ge 1$, where 
$dX_n=X_{n}-X_{n-1}$.
\end{definition}
\begin{theorem}[{\cite[Theorem 3.5]{osekowski}},
\cite{Burkholder1988Sharp}]
\label{thm:burk}
Consider $(X_n)_{n\in \N}$ and $(Y_n)_{n\in \N}$
two martingales such that
$X$ is $p$-differentially subordinate to $Y$. Then
\begin{equation}
\|X_n\|_{L^p}\le (p^*-1)\|Y_n\|_{L^p}.\label{eq:burk}
\end{equation}
Moreover, the inequality is sharp
even among dyadic pairs $((X_n),(Y_n))$
where $Y_n$ is real-valued with $Y_0=0$ and
$X_n$ is a $\pm1$ transform of $Y_n$,
meaning that
$\mathrm{d}X_n=\varepsilon_n \mathrm{d}Y_n$
with $\varepsilon_n$
a predictable sign sequence.
\end{theorem}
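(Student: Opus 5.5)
Theorem~\ref{thm:burk} is Burkholder's sharp martingale transform inequality, and I would prove it by the standard Burkholder special-function method: construct a function $U\colon H\times H\to\R$ (with $U(x,y)$ evaluated at $(X_n,Y_n)$) satisfying three properties.
\begin{enumerate}
\item The majorisation $U(x,y)\ge V(x,y):=|x|^p-(p^*-1)^p|y|^p$.
\item The initial condition $U(x,y)\le 0$ whenever $|x|\le (p^*-1)|y|$.
\item A concavity-type property along subordinate increments: for all $x,y\in H$ and $h,k\in H$ with $|h|\le|k|$, the function $t\mapsto U(x+th,y+tk)$ is concave in $t$.
\end{enumerate}
For $p\ge 2$ I would take Burkholder's explicit choice
\[
U(x,y)=p\Bigl(1-\tfrac1{p^*}\Bigr)^{p-1}\bigl(|x|-(p^*-1)|y|\bigr)\bigl(|x|+|y|\bigr)^{p-1}.
\]
The case $p<2$ follows from the analogous formula with the roles adjusted. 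Alternatively, duality for $\pm1$ transforms handles it, although for general subordinate pairs it is cleaner to verify the $p<2$ function directly.

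The argument then proceeds in the following order.
\begin{itemize}
\item \emph{Majorisation.} I verify $U\ge V$. By homogeneity this reduces to a one-variable inequality in $s=|x|/(|x|+|y|)\in[0,1]$, which is an elementary calculus check.
\item \emph{Initial condition.} This is immediate, since $|X_0|\le (p^*-1)|Y_0|$ makes the first factor of $U$ nonpositive.
\item \emph{Increment property.} I establish the key inequality
\[
U(x+h,y+k)\le U(x,y)+\langle \partial_xU(x,y),h\rangle+\langle \partial_yU(x,y),k\rangle
\qquad\text{whenever } |h|\le|k|.
\]
This follows from the concavity in (3): compute the second derivative in $t$ at points where $x,y\neq0$ and show it is at most $-c(|k|^2-|h|^2)\le 0$, then treat the singular set by continuity or approximation.
\item \emph{Expectation bound.} Given the inequality above, I apply it with $x=X_{n-1}$, $y=Y_{n-1}$, $h=dX_n$, $k=dY_n$. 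The linear terms have zero conditional expectation by the martingale property; in the $\sigma$-finite setting this requires integrability of $\partial U\cdot dX_n$, obtained by a truncation/localisation argument, for instance assuming $Y\in L^p$ and stopping. This gives $\mathbb E\,U(X_n,Y_n)\le \mathbb E\,U(X_{n-1},Y_{n-1})\le\cdots\le\mathbb E\,U(X_0,Y_0)\le 0$.
\item \emph{Conclusion.} Hence $\mathbb E\,V(X_n,Y_n)\le 0$, which is exactly \eqref{eq:burk}. If $\|X_n\|_p$ is not known a priori to be finite, I first argue that $\|X_n\|_p<\infty$ when $\|Y_n\|_p<\infty$, using subordination and a crude Burkholder--Gundy-type bound.
\end{itemize}

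\textbf{Sharpness.} For the sharpness claim I would cite Burkholder's construction~\cite{Burkholder1988Sharp}. It builds a dyadic real martingale $Y$ with $Y_0=0$ and a predictable sign transform $X$ whose ratio $\|X_n\|_p/\|Y_n\|_p$ tends to $p^*-1$. One runs the process in the region where $U=V$ fails only marginally, with exits along lines $|x|=(p^*-1\mp\delta)|y|$, so that $\mathbb E\,V(X_n,Y_n)\to0$ while $\|Y_n\|_p$ stays bounded below.

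\textbf{Main obstacle.} I expect the main difficulty to be the concavity verification of $U$ along all pairs with $|h|\le|k|$ in a general Hilbert space, together with the endpoint bookkeeping for $p<2$. The extremal construction is also delicate, but it is standard and I would import it from~\cite{osekowski}.
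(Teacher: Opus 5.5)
Your proposal is correct and is essentially the paper's argument: Burkholder's special-function method, run with the opposite sign convention. You use a majorant $U\ge |x|^p-(p^*-1)^p|y|^p$ that is concave along directions with $|h|\le|k|$, giving a supermartingale, whereas the paper uses a minorant $U_p\le V_p$ that is convex along such directions, giving a submartingale; your initial condition $U\le 0$ on $\{|x|\le(p^*-1)|y|\}$ correctly matches the paper's requirement $|X_0|\le(p^*-1)|Y_0|$, and sharpness is deferred to the literature exactly as in the paper.

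Two simplifications: your displayed function, being written in terms of $p^*$, is already Burkholder's function for every $p\in(1,\infty)$, so no separate formula or duality argument is needed for $p<2$; and finiteness of $\|X_n\|_{L^p}$ follows directly from $|dX_k|\le|dY_k|$, $\|dY_k\|_{L^p}\le 2\|Y_n\|_{L^p}$ and $\|X_0\|_{L^p}\le (p^*-1)\|Y_0\|_{L^p}$, so no Burkholder--Gundy bound is required.
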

The main ingredient necessary to prove
this theorem is the construction
of an auxiliary function $U_p(x,y)\colon H^2\to \mathbb R$ 
satisfying
\begin{enumerate}
\item $U_p(x,y)\le (p^*-1)^p|y|^p-|x|^p=:V_p(x,y)$
\item For any $x,y,h,k\in H$ with 
$|h|\le |k|$
\footnote{Perhaps surprisingly, it turns out that one
obtains the same function when studying
the problem restricted
to martingale transforms, 
i.e. $|h|=|k|$},
the function
\begin{equation*}
G(t)=U_p(x+th,y+tk)
\end{equation*}
is convex.
\end{enumerate}
Indeed, a direct computation shows that if $U_p$ satisfies
these properties, then for any $p$-differentially
subordinate pair $U_p(X_n,Y_n)$ is a submartingale.
Note that by convexity we have
\begin{equation*}
    U_p(X_n,Y_n)\ge U_p(X_{n-1},Y_{n-1})+
    \langle \partial_xU_p(X_{n-1},Y_{n-1}), dX_{n}\rangle+
    \langle \partial_y U_p(X_{n-1},Y_{n-1}), dY_{n}\rangle
\end{equation*}
and taking conditional expectation with respect to
$\mathcal F_{n-1}$ gives the claim.
Iterating we obtain
\begin{equation*}
    \mathbb E(V_p(X_n,Y_n))
    \ge \mathbb E(U_p(X_n,Y_n))
    \ge U_p(X_0,Y_0)\ge 0.
\end{equation*}
Indeed, even if we only have $|dX_n|\le |dY_n|$, we 
still have
\begin{equation}
    \label{eq:genburk}
    (p^*-1)^p\|Y_n\|_{L^p}^p-\|X_n\|_{L^p}^p\ge U_p(|X_0|,|Y_0|)
\end{equation}
Constructing such a function is far from straightforward 
and constitutes a great accomplishment of Burkholder's.
In particular, Burkholder obtained the best
such function in~\cite{Burkholder1988Sharp}
(see~\cite{VasVol} for an alternative derivation):
\begin{equation}
    \label{eq:burkfunct}
    \mathcal G_p(x,y):=\begin{cases}p\left(1-\frac1{p^*}\right)^{p-1}
    ((p^*-1)|y|-|x|)(|x|+|y|)^{p-1}&\mathrm{if\ }V_p(x,y)\le 0\\
    V_p(x,y)&\mathrm{otherwise.}
    \end{cases}
\end{equation}
With this function in hand, the verification
of the required properties is routine
and the theorem is proved.
\begin{remark}
$\mathcal G_p$ remains the best possible such 
function even when we only consider dyadic $\pm1$
martingale transforms, see \cite{VasVol}. In
other words, if we denote by $\Sigma(x_0,y_0)$
the class of pairs of scalar-valued
dyadic martingales $(f_n,g_n)$
with $g_0=x_0,f_0=y_0$ with $|dg_n|=|df_n|$, then
\begin{equation}
    \inf_{\Sigma(x_0,y_0)}(p^*-1)^p\|f_n\|_{L^p}^p
    -\|g_n\|_{L^p}^p=\mathcal G_p(x_0,y_0).
\label{eq:maxdyad}
\end{equation}
\end{remark}
This result can be easily extended to
continuous time martingales,
under some assumption of regularity.
For details on these assumptions, and 
a definition of the quadratic variation
$\langle X\rangle_t$ of a continuous 
martingale
we refer the reader to~\cite[Chapter 1]{Shreve}
and~\cite[Chapter IV]{revuzyor1999}.
\begin{definition}
Let $(X,\Sigma, \mu)$ be a $\sigma$-finite measure space
and let $(\mathcal F_t)$ be a filtration
satisfying the usual assumptions such that $\mu|_{\mathcal F_0}$
is $\sigma$-finite. Then given a separable Hilbert space $H$
and two continuous martingales
$X,Y$ taking values in $H$,
we say that $X$ is differentially subordinate
to $Y$ if
$\mathrm d\langle X\rangle_t\le
\mathrm d\langle Y\rangle_t$ almost everywhere
and $X_0=Y_0=0$.
\end{definition}
Burkholder's theorem can then be extended
(by applying Itô's formula)
to such pairs of differentially
subordinate continuous martingales,
whereby we get
\begin{theorem}[{\cite[Theorem 5.7]{osekowski}}]
\label{thm:burkholder}
Let $(X_t), (Y_t)$ be two continuous $H$-valued
martingales with $H$ a separable Hilbert space. If $X$
is differentially subordinate to $Y$, then
\begin{equation}
\vvvert{X}_{L^p}\le (p^*-1)\vvvert{Y}_{L^p},
\label{eq:Burkholder}
\end{equation}
where $\vvvert{X}_{L^p}\mathrel{:}=
\sup_t \|X_t\|_{L^p}$.
Moreover, the following stronger claim is true:
for each time $t$ we have
\begin{equation}
\|X_t\|_{L^p}\le (p^*-1)\|Y_t\|_{L^p}.
\label{eq:Burkholderpointwise}
\end{equation}
\end{theorem}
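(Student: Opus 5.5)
The approach is to transfer the discrete-time argument sketched after Theorem~\ref{thm:burk} to continuous time by means of Itô's formula applied to Burkholder's function $\mathcal G_p$ from~\eqref{eq:burkfunct}. The pointwise claim~\eqref{eq:Burkholderpointwise} immediately gives~\eqref{eq:Burkholder} after taking the supremum over $t$, so only~\eqref{eq:Burkholderpointwise} needs to be proved. We may assume $\|Y_t\|_{L^p}<\infty$; otherwise there is nothing to prove.

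\textbf{Step 1: regularisation.} $\mathcal G_p$ is only $C^1$, and is not $C^2$ across the cone $\{(p^*-1)|y|=|x|\}$ and at the origin. I would mollify it, setting $U_\delta=\mathcal G_p*\phi_\delta$ for a radial mollifier $\phi_\delta$ on $H^2$. When $H$ is infinite-dimensional, I would first reduce to finite-dimensional $H$ by projecting $X$ and $Y$ onto finite-dimensional subspaces; projection preserves differential subordination, and monotone convergence then recovers the general case. The mollification preserves two properties: first, the convexity of $t\mapsto U_\delta(x+th,y+tk)$ whenever $|h|\le|k|$; second, the majorisation $U_\delta\le V_p+\varepsilon(\delta)(1+|x|+|y|)^p$ with $\varepsilon(\delta)\to 0$. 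Both hold because mollification commutes with translations and the growth of $\mathcal G_p$ is of order $p$.

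\textbf{Step 2: Itô's formula.} Applying Itô's formula to $U_\delta(X_t,Y_t)$ gives
\begin{equation*}
U_\delta(X_t,Y_t)=U_\delta(0,0)+\int_0^t\langle\nabla U_\delta,\mathrm d(X,Y)_s\rangle+\frac12\int_0^t D^2U_\delta(X_s,Y_s)[\mathrm d(X,Y)_s,\mathrm d(X,Y)_s].
\end{equation*}
The key point is that the second-order term is nonnegative. Convexity along directions $(h,k)$ with $|h|\le|k|$ means $D^2U_\delta[(h,k),(h,k)]\ge 0$ for such directions. I would then use the standard Burkholder lemma in its Hessian form: there is a function $c(x,y)\ge0$ such that $D^2U[(h,k),(h,k)]\ge c(|k|^2-|h|^2)$ for all $h,k$. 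Combined with $\mathrm d\langle X\rangle\le\mathrm d\langle Y\rangle$, this makes the drift term nonnegative after tracing against the covariation matrix. I expect this to be the main obstacle, because the quadratic variation is a matrix, not a rank-one direction. If the lemma is unavailable for the mollified function, I would instead verify it directly for $\mathcal G_p$ on each of its two smooth regions, where the explicit formula makes the computation routine, and then pass to the mollification by averaging.

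\textbf{Step 3: expectations and limits.} I would localise with stopping times $\tau_n$ so that the stochastic integral is a true martingale, and then take expectations to obtain $\mathbb E\,U_\delta(X_{t\wedge\tau_n},Y_{t\wedge\tau_n})\ge U_\delta(0,0)$. Letting $n\to\infty$ uses Doob's inequality, $\|\sup_{s\le t}|Y_s|\|_{L^p}\lesssim\|Y_t\|_{L^p}$, together with Burkholder--Davis--Gundy and subordination, which give $\|\sup_{s\le t}|X_s|\|_{L^p}\lesssim\|Y_t\|_{L^p}$. This domination justifies dominated convergence. Letting $\delta\to0$ then yields $\mathbb E\,V_p(X_t,Y_t)\ge\mathcal G_p(0,0)=0$, which is exactly~\eqref{eq:Burkholderpointwise}. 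In the $\sigma$-finite (non-probability) setting, the same computation applies after restricting to sets of finite measure in $\mathcal F_0$ and summing.
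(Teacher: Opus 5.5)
Your route is the one the paper has in mind. The paper gives no argument of its own: it cites \cite[Theorem 5.7]{osekowski} and says ``by applying It\^o's formula''. Steps 2 and 3 of your proposal are fine. In particular, the Hessian form you flag as the main obstacle is automatic for a $C^2$ function. At a fixed point, the quadratic form $Q(h,k)=D^2U[(h,k),(h,k)]$ is nonnegative wherever $q(h,k)=|k|^2-|h|^2\ge 0$, so the S-lemma gives $c\ge0$ with $Q\ge c\,q$. Writing the covariation density as a sum of rank-one terms then makes the drift at least $\tfrac c2(\mathrm d\langle Y\rangle-\mathrm d\langle X\rangle)\ge 0$. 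The problems are in Step 1.

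First, projecting $X$ and $Y$ onto a common finite-dimensional subspace does \emph{not} preserve differential subordination. The projection $P_N$ can discard quadratic variation of $Y$ while keeping all of that of $X$. Take independent Brownian motions $B^0,B^1,\dots$ and set $X_t=B^0_te_1$ and $Y_t=\sum_{k\ge1}2^{-k/2}B^k_te_{k+1}$. Then $\langle X\rangle_t=\langle Y\rangle_t=t$, yet $\langle P_NX\rangle_t=t>(1-2^{1-N})t=\langle P_NY\rangle_t$ for every $N$. So the finite-dimensional case never applies, and monotone convergence has nothing to act on. Projecting only $X$ does keep $P_NX\preccurlyeq Y$, but then $Y$ is still infinite-dimensional and cannot be mollified.

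A standard repair avoids mollification altogether. Use Burkholder's original function $U_B(x,y)=p(1-1/p^*)^{p-1}\bigl((p^*-1)|y|-|x|\bigr)(|x|+|y|)^{p-1}$. It satisfies $U_B\le V_p$, is $C^\infty$ on $\{x\neq0,\ y\neq0\}$ in any Hilbert space, and is convex there along directions with $|h|\le|k|$. Replace $(X,Y)$ by $\bigl((X,\varepsilon),(Y,\varepsilon)\bigr)$ with values in $H\oplus\R$. This pair has the same quadratic variations and never meets the singular set. Apply It\^o's formula to $U_B$ directly, and let $\varepsilon\to0$ at the end.

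Second, for $p\in(1,2)$ the function displayed in \eqref{eq:burkfunct} does not have the convexity you want to transfer to $U_\delta$. Near $(0,y)$ with $y\neq0$ one has $V_p>0$, so $\mathcal G_p=V_p$ there. For $|h|=|k|\neq0$, the map $t\mapsto\mathcal G_p(th,y+tk)=(p^*-1)^p|y+tk|^p-|t|^p|h|^p$ then has second derivative tending to $-\infty$ as $t\to0$, because $p<2$. There is therefore nothing for the mollification to preserve.

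For $p<2$ the two regions in \eqref{eq:burkfunct} must be interchanged: Burkholder's expression on $\{|x|<(p^*-1)|y|\}$ and $V_p$ on the complement. More simply, use $U_B$ for every $p$, which also disposes of the first issue. For finite-dimensional $H$, which is all that is needed for $R\otimes R$, your mollification argument is complete once this second point is fixed.
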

Similarly, though with different functions,
one can prove a plethora of inequalities.
For example (\cite[Theorem 3.2, Theorem 5.7]{osekowski})
we can bound the weak $L^p$ norm by
\begin{align*}
\|X_t\|_{L^{p,\infty}}\le
    \begin{cases}
    \left(\frac{2}{\Gamma(p+1)}\right)^{\frac1p}
    \|Y_t\|_{L^p}
    &\text{if $p\in(1,2]$}\\ 
    \left(\frac{p^{p-1}}2\right)^{\frac1p}
    \|Y_t\|_{L^p}&\text{if $p>2$}
    \end{cases}
\end{align*}
These results are very similar to results concerning
boundedness for singular integral operators.
Inspired by this connection and by
Hytönen's solution to the $A_p$
conjecture, one may wonder if a "weighted"
Burkholder inequality holds.
Such a result was recently obtained by Domelevo,
Petermichl and Skreb
\cite{Petermichl} (using a different technique
from the one outlined above).
Before stating it, let us recall the definition
of a martingale weight:
\begin{definition}\label{def:weightmart}
Let $(w_n)$ be a martingale associated to
the filtration $\mathcal F_n$.
We say $(w_n)$ belongs to $A_p$ for $p\in (1,\infty)$
if $\sup_n \|w_n\|_{L^p}<\infty$, $w_n$ is positive
almost everywhere and
\begin{equation*}
[w]_{A_p^{\text{mart}}}
\mathrel{:}=
\sup_n \left\|w_n \mathbb E(w^{-\frac1{p-1}}
|\mathcal F_n)^{p-1}
\right\|_\infty<\infty,
\end{equation*}
where $w=\lim_{n\to \infty} w_n$, which exists by standard martingale
convergence theorems.
If $(w_n)$ belongs to some $A_p$, we say it is a weight.
\end{definition}
With this definition and its natural extension
to the continuous case, the weighted
Burkholder theorem can be stated:
\begin{theorem}\label{thm:burkweight}
Let $(X_t), (Y_t)$ be two continuous martingales
on the same filtered space. 
Assume $X_0=Y_0=0$, and that
$X_t$ is differentially subordinate to $Y_t$. Then
\begin{equation*}
\sup_t \|X_t\|_{L^p(w\mathrm{d}\mu)}\lesssim
(p^*-1)[w]^{\max(1,\frac1{p-1})}_{A_p^{\mathrm{mart}}}
\sup_t \|Y_t\|_{L^p(w\mathrm d \mu)},
\end{equation*}
where we define $L^p(w)$ on martingales as:
\begin{equation*}
\vvvert{X}_{L^p(w)}=
\sup_t \mathbb E(|X_t|^pw_t)^{\frac1p}.
\end{equation*}
\end{theorem}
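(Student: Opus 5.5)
The plan is to follow the Bellman-function route of Domelevo, Petermichl and Škreb rather than Burkholder's two-variable argument, since $\mathcal G_p$ cannot see the weight. We treat the weight as two further martingales and look for an auxiliary function of four variables. We first handle $p\ge 2$, where the claimed power is $[w]^1$. The case $p\in(1,2)$ should then follow by duality. If $X$ is differentially subordinate to $Y$, the map $Y\mapsto X$ behaves like a contraction of the quadratic variation, and its adjoint on $L^{p'}(\sigma)$, with $\sigma=w^{-1/(p-1)}$, again satisfies differential subordination. Since $[\sigma]_{A_{p'}}=[w]_{A_p}^{1/(p-1)}$, this produces the exponent $\max(1,\frac1{p-1})$.

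Concretely, we set $w_t=\mathbb E(w\,|\,\mathcal F_t)$ and $v_t=\mathbb E(w^{-1/(p-1)}\,|\,\mathcal F_t)$. These are continuous martingales, and $(w_t,v_t)$ stays in the domain $D_Q=\{(a,b):a,b>0,\ 1\le ab^{p-1}\le Q\}$ with $Q=[w]_{A_p^{\mathrm{mart}}}$. We would construct $B=B_Q(x,y,a,b)$ on $H\times H\times D_Q$ with three properties.
\begin{enumerate}
\item A majorisation: $B(x,y,a,b)\ge |x|^p a - C^p(p^*-1)^pQ^p|y|^pa$, which is adapted to the quantity we want to bound.
\item A size bound at the origin: $B(0,0,a,b)\le 0$.
\item A concavity condition, stated along the $4$-tuple rather than along straight lines. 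For every direction $(h,k,\alpha,\beta)$ with $|h|\le|k|$, the second-order form $d^2B[(h,k,\alpha,\beta)]$ is $\le 0$. We allow the $(\alpha,\beta)$-block to compensate a positive $(h,k)$-contribution, as in the Petermichl--Volberg Bellman functions for the Beurling--Ahlfors operator.
\end{enumerate}
Rather than a function with exactly these signs, the natural candidate is a combination of $\mathcal G_p(x,y)$ weighted by $a$, a correction of the form $Q^{p}|y|^p\,b^{1-p}$, and a term exploiting the strict concavity of $(a,b)\mapsto$ a power of $ab^{p-1}$ inside $D_Q$.

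Given such a $B$, we apply Itô's formula to $B(X_t,Y_t,w_t,v_t)$. The drift term is $\frac12 d^2B$ evaluated on the joint quadratic covariation. Here we must use that $\mathrm d\langle X\rangle\le \mathrm d\langle Y\rangle$, and we must control the cross-variations $\mathrm d\langle X,w\rangle$ and $\mathrm d\langle Y,w\rangle$ by Kunita--Watanabe, $|\mathrm d\langle X,w\rangle|\le (\mathrm d\langle X\rangle\,\mathrm d\langle w\rangle)^{1/2}$. This reduces the drift to a quadratic form in the scalars $(|{\rm d}Y|,|{\rm d}w|,|{\rm d}v|)$, which must be shown nonpositive. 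Localising with stopping times and taking expectations then gives $\mathbb E\,B(X_t,Y_t,w_t,v_t)\le B(0,0,w_0,v_0)\le0$. Using $\mathbb E(|X_t|^pw)=\mathbb E(|X_t|^pw_t)$, the majorisation yields the weighted estimate with constant $C(p^*-1)Q$.

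The main obstacle is the construction of $B$ with the correct dependence on both $p^*-1$ and $Q$. Known weighted Bellman functions give the sharp power of $Q$ but lose in $p$, while $\mathcal G_p$ is sharp in $p$ but unweighted. First I would try an ansatz $B=a\,\mathcal G_p(x,y)-K(p^*-1)^pQ^p|y|^p\,\Psi(a,b)$, with $\Psi$ built from $b^{1-p}$ and $ab^{p-1}$. I would verify the concavity condition by computing the Hessian blockwise. The hard part is the mixed $x$--$a$ derivatives, which are of order $|x|^{p-1}$ and must be absorbed by the $y$--$b$ block using $|x|\lesssim (p^*-1)|y|$ on the region where $\mathcal G_p\ne V_p$. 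The estimate is only claimed up to an absolute constant, so I would not aim for sharp constants in this step.
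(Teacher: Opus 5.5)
\paragraph{The construction of $B$ is missing.} The paper does not prove this statement. It quotes it from \cite{Petermichl} and remarks only that it was obtained there by a technique different from the Burkholder auxiliary-function method outlined just before it. So your plan is not a reconstruction of the cited argument, and it has to stand on its own. As written, it does not.

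Apart from the existence of $B_Q$, everything you list is routine: It\^o's formula, localisation, $1\le w_tv_t^{p-1}\le Q$, and $\mathbb E(|X_t|^pw)=\mathbb E(|X_t|^pw_t)$. The whole content of the theorem is a function satisfying (1)--(3) with constant $C(p^*-1)Q$, and you only propose an ansatz to be tried.

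\paragraph{The proposed ansatz fails.} First fix the sign. The paper's $\mathcal G_p$ is convex along $|h|\le|k|$ and lies below $V_p$, so the candidate must be $B=-a\,\mathcal G_p(x,y)-K'|y|^p\Psi(a,b)$ with $K'=K(p^*-1)^pQ^p$. Next, $\mathcal G_p$ coincides with $V_p$ on $\{|x|\le(p^*-1)|y|\}$. The region where it differs is therefore $\{|x|>(p^*-1)|y|\}$, the opposite of the inequality you intend to use there.

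On that region $\mathcal G_p=c_p((p^*-1)|y|-|x|)(|x|+|y|)^{p-1}$. Take the admissible direction $(h,k)=(t\hat x,-t\hat y)$, which has $|h|=|k|$.
\begin{itemize}
\item Along this direction $\mathcal G_p$ is affine.
\item Its first derivative in this direction is $-c_pp^*(|x|+|y|)^{p-1}t\neq0$.
\end{itemize}
Hence along $(t\hat x,-t\hat y,\alpha,0)$ the form $d^2B$ equals $2c_pp^*(|x|+|y|)^{p-1}\alpha t$ plus correction terms of order $K'|y|^{p-2}t^2$, $K'|y|^{p-1}|\alpha t|$ and $K'|y|^p\alpha^2$. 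Negative semidefiniteness of this $2\times2$ form would force $(|x|+|y|)^{2p-2}\lesssim_{a,b}K'^2|y|^{2p-2}$. This fails as $|x|/|y|\to\infty$, a region that differentially subordinate pairs started at $0$ do visit.

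Consequently, any admissible correction must supply strict concavity in $a$ at the scale $(|x|+|y|)^p$, so it must depend on $x$. Such a term then competes with the majorisation (1) and with the $(x,y)$-block of $-a\mathcal G_p$, which is degenerate exactly along these directions. Supplying it with the right powers of $Q$ and $p^*-1$ is the missing idea.

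\paragraph{The duality step needs repair.} The reduction of $p\in(1,2)$ to $p'>2$ cannot pass through an adjoint of a map $Y\mapsto X$. A differentially subordinate pair need not be linked by any linear map; take two independent coordinates of a Brownian motion. The step can be done pairwise instead:
\begin{itemize}
\item For $\phi$ with $\|\phi\|_{L^{p'}(\sigma)}\le1$, put $N_s=\mathbb E(\phi\,|\,\mathcal F_s)$.
\item Put $\tilde X=\int\lambda\,\mathrm dY$ with $\lambda=\mathrm d\langle X,N\rangle/\mathrm d\langle Y\rangle$.
\item Kunita--Watanabe then gives $\mathrm d\langle\tilde X\rangle\le\mathrm d\langle N\rangle$ and $\mathbb E\langle X_t,\phi\rangle=\mathbb E\langle Y_t,\tilde X_t\rangle$.
\end{itemize}
Two adjustments are then needed.
\begin{itemize}
\item $N_0\neq0$ in general, so (2) must be strengthened to $B(0,y,a,b)\le0$ for all $y$.
\item You must apply the $p'$-estimate to the stopped pair $(\tilde X^{t},N)$ at terminal time. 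Conditional expectations are not contractions on $L^{p'}(\sigma)$, so passing through $\sup_s\|N_s\|_{L^{p'}(\sigma)}$ would cost an extra power of $[w]$.
\end{itemize}
Finally, the continuity of $w_t$ and $v_t$, which It\^o's formula requires, is an extra assumption on the filtration.
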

The implied constant in the theorem can be
made explicit and is
in particular independent of dimension.

The technique we will use in 
the following to obtain upper bounds
for Korn's inequalities is 
similar to the technique pioneered by Bañuelos and Wang
in~\cite{Banuelos1995SHARP2po}
in the context of the Iwaniec conjecture (sometimes referred to as
the Iwaniec-Martin conjecture).
Namely, one defines via heat extension
for each $T>0$ a martingale 
$M_t=P_{T-t}\nabla u(B_t),0\le t<T$ (a slightly
peculiar Brownian motion is 
required, see the next section for details)
associated to the gradient $\nabla u$ and then considers
its projections: in the Iwaniec case, these are 
$X_t=P_{\mathrm{CO}^+(2)}(M_t)$,
$Y_t=P_{\mathrm{CO}^-(2)}(M_t)$.
If $Y_t$ is differentially subordinate to $X_t$
(up to a multiplicative constant), Burkholder's method
implies $L^p$ bounds for the martingales,
which then in the limit imply the $L^p$ bounds
for $\nabla u$ and its projections.
\subsection{The viewpoint of calculus of variations}
We briefly recall some concepts from
calculus of variations. This subsection is not
strictly necessary to 
understand the proof of Theorem~\ref{thm:Korn}.
The most important 
concepts we need are
two notions of generalised convexity
for functions $f\colon M_d(\R)\to \R$,
henceforth referred to as integrands.
\begin{definition}[Rank-one convexity]
An integrand $f\colon M_{d}(\mathbb{R})\to
\mathbb{R}$ is rank-one convex
if it is convex on each rank-one line, i.e. on each 
segment connecting two matrices $A,B$ such that
$\mathrm{rank}(A-B)=1$. $f$ is rank-one convex at $A_0$
if there exists a rank-one convex integrand $g$
such that $g\le f$ and $g(A_0)=f(A_0)$.
\end{definition}
\begin{remark}
A rank-one convex integrand is locally
Lipschitz (\cite[lemma 5.6]{Rindler}).
\end{remark}
\begin{definition}
A continuous integrand $f\colon M_d(\R)\to \R$ is quasiconvex
if for all vector fields 
$u$ that belong to
$W^{1,\infty}_0(B_{\mathbb{R}^d}(0,1),
\mathbb{R}^d)$ and $A\in M_d(\R)$
we have
\begin{equation*}
\fint_{B_{\mathbb{R}^d}(0,1)}
f(A+\nabla u(x))\mathrm{d}x\ge f(A).
\end{equation*}
$f$ is quasiconvex at $A_0$ if there exists
a quasiconvex integrand $g$ such that
$g\le f$ and $g(A_0)=f(A_0)$.
\end{definition}
It is well known (see~\cite[Proposition 5.3]{Rindler}
and~\cite[Lemma 4.3]{Muller1999VariationalTransitions})
that quasiconvex integrands are rank-one convex.
The opposite implication, known as Morrey's problem,
has attracted considerable attention as one of 
the main open questions in the field.
Thanks to a remarkable result of
Šverák in~\cite{Sverak1992Rank-oneQuasiconvexity}
(see also~\cite{Grabovsky2017}), 
we now know the two conditions are not equivalent
in dimensions higher than $2$. The difference
between the two conditions is, in fact, substantial: 
in dimensions higher than $2$
quasiconvexity is a non-local condition
(see~\cite{Kristensen1999-Nonlocality}).
In dimension $2$\footnote{The only remaining 
cases are for integrands defined
on $2\times n$ for $n\ge 2$:
if the test functions are scalar,
all these notions of generalised
convexity collapse into convexity.},
Morrey's problem remains unanswered, though several
positive results are known for 
particular functions and particular domains
(see~\cite{Harris2018},~\cite{Faraco2008},~\cite
{Muller1999Rank-oneMatrices}).
Note in particular that the non-locality of
quasiconvexity remains an open problem here
and indeed if it were a local condition, then
it would coincide with rank-one convexity,
see~\cite[Theorem 2]{Kristensen1999-Nonlocality}.\par
As with most notions of convexity, we also have 
an associated notion of envelope;
the rank-one convex and quasiconvex envelopes
of an integrand $f\colon M_{d}(\R)\to \mathbb{R}$ 
are defined respectively as
\begin{equation*}
    f^{\mathrm{rc}}(A)=\sup\{g(A)\colon 
    g\le f\ \mathrm{and}\ g\ \text{rank-one convex}\}
\end{equation*}
and
\begin{equation*}
f^{\mathrm{qc}}(A)\mathrel{:}=
\sup\{g(A)\colon  g\le f\ \mathrm{and}\ g\ 
\mathrm{ quasiconvex}\}.
\end{equation*}
While these definitions are not particularly useful
for concrete calculations, 
the envelopes admit an alternate description by duality,
which is the description we shall find most useful.
For $f^{\mathrm{rc}}$ we have (see 
\cite[Theorem 6.10]{Dacorogna2008DirectVariations}
for the formula for the envelope,
\cite{Pedregal1993LaminatesMicrostructure}
and~\cite[Section 9.1]{Rindler} for
definitions and generalities on laminates):
\begin{equation}\label{eq:rcenv}
f^{\mathrm{rc}}(A)
=\inf \left\{\langle f,\mu\rangle\colon
\mu\in\mathcal M_A\right\},
\end{equation}
where $\mathcal M_A$ represents
the set of laminate measures (of finite order) having
$A$ as their barycentre.
Likewise for the quasiconvex envelope
we have Dacorogna's formula
(\cite[Theorem 6.9]
{Dacorogna2008DirectVariations},~\cite[Appendix]
{Kinderlehrer1991CharacterizationsGradients})
\begin{align}
\label{eq:qcenv}
    f^{\mathrm{qc}}(A)
&=
    \underset{
    u\in W^{1,\infty}_0(B_{\mathbb{R}^d}(0,1),
    \mathbb{R}^d)
    }
\inf
    \frac{1}{|B_{\mathbb{R}^d}(0,1)|}
    \int_{B_{\mathbb{R}^d}(0,1)}f(A+\nabla u(x))
    \mathrm{d} x\\ 
    &={\inf}
    \{\langle f,\mu\rangle\colon \mu\in
    \mathbf{GYM}_A\},
\end{align}
where $\mathbf{GYM}_A$ denotes the set of
homogeneous gradient Young measures
having barycentre $A$ (see for instance \cite{Rindler}).
We shall need the following elementary lemma:
\begin{lemma}
    Given a continuous integrand
    $f\colon M_d(\R)\to \R$, either
$f^{\mathrm{qc}}\equiv-\infty$ or 
$f^{\mathrm{qc}}>-\infty$ and 
$f^{\mathrm{qc}}$ is quasiconvex, 
and similarly for rank-one convexity.
If $f$ is positively $p$-homogeneous for some $p$, meaning
$f(tx)=t^p f(x)$ for all $x\in M_d(\R)$ and all $t>0$, then either
$f^{\mathrm{qc}}(0)=0$ or $f^{\mathrm{qc}}(0)=-\infty$.
\end{lemma}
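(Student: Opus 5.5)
The proof splits into two parts: the dichotomy for $f^{\mathrm{qc}}$ (and $f^{\mathrm{rc}}$), and the homogeneity statement.

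\textbf{The dichotomy.} First we show that $f^{\mathrm{qc}}$ is either identically $-\infty$ or everywhere finite. By Dacorogna's formula \eqref{eq:qcenv}, $f^{\mathrm{qc}}(A)\le f(A)<\infty$ for every $A$. Suppose $f^{\mathrm{qc}}(A_0)=-\infty$ at a single point $A_0$; we show $f^{\mathrm{qc}}(B)=-\infty$ for every $B$. If some quasiconvex $g\le f$ had $g(B)>-\infty$, then $g$ would be a real-valued quasiconvex function, since quasiconvex integrands in the definition are continuous and finite. Hence $g(A_0)>-\infty$, and because $f^{\mathrm{qc}}$ is the supremum over all such $g$, we get $f^{\mathrm{qc}}(A_0)\ge g(A_0)>-\infty$, a contradiction. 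So either no quasiconvex minorant of $f$ exists, in which case $f^{\mathrm{qc}}\equiv-\infty$ (with $\sup\emptyset=-\infty$), or one exists and $f^{\mathrm{qc}}$ is finite everywhere.

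\textbf{Quasiconvexity of the envelope when finite.} In the finite case we must show $f^{\mathrm{qc}}$ is itself quasiconvex. It is a pointwise supremum of quasiconvex functions, and the inequality $\fint g(A+\nabla u)\ge g(A)$ passes to suprema: for each admissible $g$,
\begin{equation*}
\fint_{B_{\mathbb{R}^d}(0,1)} f^{\mathrm{qc}}(A+\nabla u)\,\mathrm{d}x\ge \fint_{B_{\mathbb{R}^d}(0,1)} g(A+\nabla u)\,\mathrm{d}x\ge g(A),
\end{equation*}
and taking the supremum over $g$ gives $\fint f^{\mathrm{qc}}(A+\nabla u)\ge f^{\mathrm{qc}}(A)$.

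The remaining issue, which is the main technical point, is continuity, because the definition requires quasiconvex integrands to be continuous. A supremum of continuous functions is only lower semicontinuous a priori. To fix this, we first note that the supremum of rank-one convex functions is rank-one convex, since convexity along lines is preserved under suprema. A finite rank-one convex function is separately convex, and separately convex finite functions are locally Lipschitz (cf.\ \cite[lemma 5.6]{Rindler}). So $f^{\mathrm{qc}}$ is continuous, and the argument above then applies.

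\textbf{The rank-one convex case.} For $f^{\mathrm{rc}}$ the same argument works with "quasiconvex" replaced by "rank-one convex" throughout. Here continuity is automatic, again by the local Lipschitz remark.

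\textbf{Homogeneity.} Suppose $f$ is positively $p$-homogeneous with $p>0$. The class of quasiconvex minorants is invariant under the rescaling $g\mapsto g_t(A):=t^{-p}g(tA)$ for $t>0$. Indeed, $g_t$ is still quasiconvex, because $tu$ is an admissible test function whenever $u$ is; and $g_t\le t^{-p}f(tA)=f(A)$. Hence $f^{\mathrm{qc}}(tA)=t^pf^{\mathrm{qc}}(A)$. At $A=0$ this gives $f^{\mathrm{qc}}(0)=t^pf^{\mathrm{qc}}(0)$ for all $t>0$. Taking $t\ne 1$, any finite value must be $0$, so $f^{\mathrm{qc}}(0)\in\{0,-\infty\}$.

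The rank-one convex analogue follows identically, using that rank-one lines are preserved under scaling.
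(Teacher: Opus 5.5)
Your proof is correct. The paper calls this lemma elementary and gives no proof, so there is nothing to compare against step by step.

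Your route is the natural one given the paper's definitions:
\begin{itemize}
\item You work directly with $f^{\mathrm{qc}}$ as a supremum of quasiconvex minorants. Every such minorant is real-valued, so $f^{\mathrm{qc}}\equiv-\infty$ exactly when no minorant exists.
\item You get continuity by observing that this supremum is a finite rank-one convex function, hence locally Lipschitz. That is the one genuinely non-trivial point, and you handle it properly.
\item You get homogeneity from the invariance of the minorant class under $g\mapsto t^{-p}g(t\,\cdot)$.
\end{itemize}
Starting from Dacorogna's formula \eqref{eq:qcenv} instead would have needed covering arguments to prove quasiconvexity of the infimum; your approach avoids them.

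Two small remarks. First, $f^{\mathrm{qc}}\le f$ is immediate from the definition as a supremum of minorants, so you do not need to invoke \eqref{eq:qcenv} for it. Second, your restriction to $p>0$ is genuinely needed. A nonzero constant integrand is positively $0$-homogeneous and has $f^{\mathrm{qc}}(0)\neq 0$, and for $p<0$ continuity forces $f\equiv 0$. The paper only uses $p\in(1,\infty)$, so this is harmless.
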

It follows that $C(p,d)$ is equal
to the minimal constant
$C^{\mathrm{qc}}(p,d)$ which makes the integrand
\[
f(A):=C^p|P_{\mathrm{Sym}(d)}(A)|^p-
|P_{\mathrm{Skew}(d)}(A)|^p\]
quasiconvex at $0$. In light of Morrey's problem,
studying the relation between $C^{\mathrm{qc}}$
and $C^{\mathrm{rc}}$\footnote{the minimal constant that
makes the function rank-one convex at $0$}
is particularly interesting: either we have 
$C^{\mathrm{qc}}=C^{\mathrm{rc}}$, in which case
the inequality holds for the best constant
one could hope for,
or $C^{\mathrm{qc}}>C^{\mathrm{rc}}$, and by taking
rank-one convex envelope
we end up with a rank-one convex integrand
which is not quasiconvex.
\begin{remark}
The Iwaniec conjecture can also be seen
through these lenses,
with $\mathrm{Sym}(d)$ and $\mathrm{Skew}(d)$
substituted by $\mathrm{CO}^+(2)$ and
$\mathrm{CO}^-(2)$.
The link between sharp constants
and the calculus of variations
emerged prominently in Iwaniec's work
on the Beurling-Ahlfors operator
and generalisations in quasiconformal map theory,
see~\cite{Iwaniec2002Nonlinear},~\cite{Iwaniec1982}
and~\cite{Iwaniec1982OnExponents}.\par Let us denote by 
$\mathfrak C^{\mathrm{qc}}(p)$
and $\mathfrak C^{\mathrm{rc}}(p)$
the constants associated
with the Iwaniec problem.
The result of Lehto (\cite{Lehto_1966})
we mentioned before can now be
equivalently stated as
$\mathfrak C^{\mathrm{rc}}(p)\ge p^*-1$,
and so the Iwaniec conjecture
can also be written as
\[\mathfrak{C}^{\mathrm{rc}}=
\mathfrak{C}^{\mathrm{qc}}.\]
\end{remark}
We have described in the previous subsection how 
to estimate $C^{\mathrm{qc}}$ (from above),
but how does one
estimate $C^{\mathrm{rc}}$? Lehto's proof 
takes advantage of the complex-analytical nature of the
Iwaniec question\footnote{A proof that avoids
complex analysis
was obtained in~\cite{boros2011}.},
which is not available in this case,
especially if we try and work in general dimension $d$.
This question, studied in slightly
more generality in~\cite{Cassese2025},
turns out to have a strikingly similar answer
to the one for
$C^{\mathrm{qc}}$ described above.
In the rank-one convex case,
we are dealing only with pre-laminate measures,
which turn out 
to be exactly the laws of
matrix-valued dyadic martingales 
with rank-one increments: hence, 
the problem of determining 
$C^{\mathrm{rc}}$ becomes a martingale problem,
and Burkholder's method provides the tool necessary to
obtain bounds on the constant
and even to calculate it exactly
\footnote{Though the situation is not, in general,
as simple as in Korn's case,
see~\cite{Cassese2025} for details}.
\par
The parallel between the
two instances is particularly surprising given that
quasiconvex and
rank-one convex integrands can have,
a priori, wildly different behaviours.
While we do not have a satisfying "explanation"
of this phenomenon,
we will try here to provide an attempt 
at a rough heuristic: 
to estimate $C^{\mathrm{qc}}$,
it is necessary to extend the correspondence 
\[\{\text{pre-laminates}\}\leftrightarrow
\{\text{Martingales with rank-one increments}\}\]
to a dense subclass of
homogeneous gradient Young measures.
The heat 
martingale representation described above provides 
such an extension,
since $\mathrm{Law}(M_t^T)$ coincides with the
pushforward through
$(P_{T-t}\nabla u)$ of the Lebesgue measure
$\mathrm{d}x$ and as $t\to T$ we 
recover the measure
$\nabla u_{\#}(\mathrm{d}x)$. However,
simply considering $M_d(\R)$-valued heat-based
martingales is too naïve a representation:
it covers more measures than required,
leading to bounds that are
conjectured not to be optimal. In other words,
in the rank-one convex case not only
can we represent pre-laminates as
$M_d(\R)$-valued dyadic martingales,
but we know exactly which of those are
induced by pre-laminates;
a similar characterisation is lacking for 
homogeneous gradient Young measures.
This is not particularly surprising, 
given that (as one can see by the envelope formulas
mentioned above) Morrey's problem 
is equivalent to whether all homogeneous gradient
Young measures are laminates.\par
The progress obtained in the study
of the Iwaniec conjecture 
illustrates this rather well:
once a suitable martingale extension 
process has been found
(such as the heat-based one),
the subsequent work improving those bounds
has been focused on leveraging some of the properties
that the heat-martingales induced by gradients had,
essentially restricting
the class of martingales considered
(see for example~\cite{Volberg},~\cite{Banuelos2008}).
The current best known bound of $1.575(p^*-1)$
takes advantage of the fact 
that the martingales induced by these gradients
(or to be more specific their
conformal and anti-conformal projections)
are conformal martingales. In a sense then, these
works on the Iwaniec conjecture
can be seen as successive attempts at slimming down
the family of measures that are considered,
resulting in improved bounds
\footnote{Of course, restricting the family
of measures is only
half the battle, as one then needs to prove that
in this new and smaller class techniques à la Burkholder
and/or Bellman function
lead to an improved bound.}.\par
In this context, Burkholder's function $\mathcal G_p$ is
particularly important, acting
as a bridge between the discrete
and continuous settings. It is 
then worthy of mention that
its quasiconvexity is still an
open problem 
(see~\cite
{Astala2023TheTheory},~\cite
{KariAstala2024},~\cite
{Iwaniec2002Nonlinear},~\cite
{AstalaIwaniecPrauseSaksman2015},~\cite
{Guerra2019} and~\cite{BaernsteinII1997}).
\section{Proof of Theorem~\ref{thm:Korn}}
\subsection{Proof of the upper bound
in Theorem~\ref{thm:Korn}}
We will prove the theorem
by using a suitably tensorialised
version of the martingale
projections defined by Bañuelos and Baudoin
in~\cite{BanuelosBaudoin2013}.
The proof can be reworked
to make no mention of martingales and
instead use the heat semigroup
directly (as
in~\cite{Petermichl2002HEATINGQUASIREGULAR}),
but this obscures the nature of the argument.
Before proving the result, let us recall
the setting introduced in~\cite{BanuelosBaudoin2013}.
Let $\mathcal B_t$ denote a Brownian motion in $\R^d$
with $\mathrm{d}x$ as its starting distribution,
and let for $0\le t\le T$
$Y^T_t\mathrel{:}=(\mathcal B_t, T-t)$
(henceforth referred to as
the space-time Brownian motion).
Denote the heat semigroup by $(P_t)_{t\ge 0}$.
Given $f\in C^\infty_c(\mathbb{R}^d)$,
we define the process
\begin{equation*}
  M_t^T \mathrel{:}=
 Uf(Y_t^T)=P_{T-t}f(\mathcal B_t),\qquad 0\le t\le T,
\end{equation*}
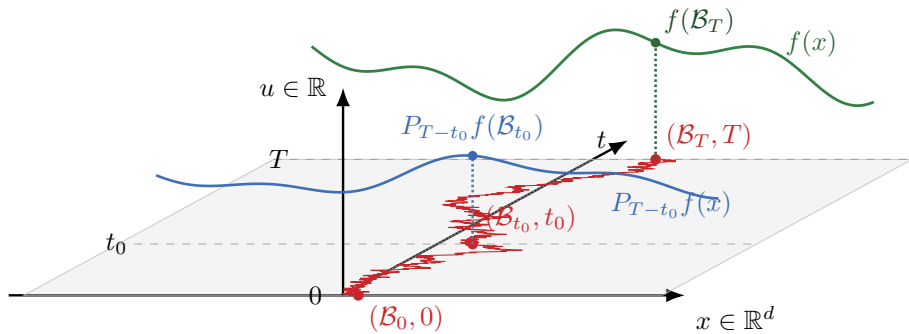
\begin{figure}
    \centering
\begin{tikzpicture}[
  x={(1.00cm,0cm)},
  y={(0.62cm,0.34cm)},
  z={(0cm,0.95cm)},
  >=Latex,
  font=\small
]
  \definecolor{timeblue}{RGB}{57,106,177}
  \definecolor{pathred}{RGB}{204,37,41}
  \definecolor{heatgreen}{RGB}{62,150,81}
  \definecolor{softgray}{RGB}{110,110,110}

  \def\Tmax{5.3}
  \def\Tplane{5.3}
  \def\tt{2}

  \draw[->, line width=0.9pt] (-4.4,0,0) -- (4.5,0,0) node[below right] {$x\in\mathbb{R}^d$};
  \draw[->, line width=0.9pt] (0,0,0) -- (0,\Tmax+0.75,0) node[left=4pt] {$t$};

   \fill[softgray!25, fill opacity=0.28, draw=softgray!45, draw opacity=0.8] (-4.2,0,0) -- (4.2,0,0) -- (4.2,\Tplane,0) -- (-4.2,\Tplane,0) -- cycle;
  \draw[softgray!55, dashed] (-4.2,\tt,0) -- (4.2,\tt,0);
  \draw[softgray!65, dashed] (-4.2,\Tmax,0) -- (4.2,\Tmax,0);
  \draw[softgray!55, densely dotted] (0,0,0) -- (0,\Tmax,0);
  \node[left=4pt] at (-3.8,\tt,0) {$t_0$};
  \node[left=2pt] at (-3.8,\Tmax,0) {$T$};
  \node[left=4pt] at (0,0,0) {$0$};
  \draw[->, line width=0.9pt] (0,0,0) -- (0,0,2.9) node[left=2pt] {$u\in \R$};

  \pgfmathsetseed{11112017}
  \pgfmathtruncatemacro{\nsteps}{1000}
  \pgfmathsetmacro{\xoffset}{0.20}
  \pgfmathsetmacro{\xpathscale}{0.80}
  \pgfmathsetmacro{\B}{0}
  \pgfmathsetmacro{\dt}{\Tmax/\nsteps}
  \pgfmathtruncatemacro{\kslice}{floor(\nsteps*\tt/\Tmax)}
  \xdef\BrownianPath{(\xoffset,0,0)}
  \xdef\BxSlice{\xoffset}
  \xdef\BxFinal{\xoffset}
  \foreach \k in {1,...,\nsteps}{
    \pgfmathsetmacro{\xi}{sqrt(12)*(rnd-0.5)}
    \pgfmathsetmacro{\Bnext}{\B + sqrt(\dt)*\xi}
    \xdef\B{\Bnext}
    \pgfmathsetmacro{\tk}{\k*\dt}
    \pgfmathsetmacro{\xk}{\xoffset + \xpathscale*\B}
    \xdef\BrownianPath{\BrownianPath (\xk,\tk,0)}
    \ifnum\k=\kslice
      \xdef\BxSlice{\xk}
    \fi
    \ifnum\k=\nsteps
      \xdef\BxFinal{\xk}
    \fi
  }

  \draw[pathred, line width=0.40pt] plot coordinates {\BrownianPath};
  \fill[pathred] (\xoffset,0,0) circle (2.0pt);
  \node[pathred, below right] at (\xoffset,0,0) {$(\mathcal B_0,0)$};
  \fill[pathred] (\BxFinal,\Tmax,0) circle (2.0pt);
  \node[pathred, above right] at (\BxFinal,\Tmax,0) {$(\mathcal B_T,T)$};

  \draw[timeblue, line width=1.0pt, opacity=0.96]
    plot[domain=-3.7:3.7, samples=150, smooth, variable=\x]
      (\x,\tt,{0.95 + 0.22*sin(65*(\x+0.6)) + 0.10*cos(150*(\x-0.2))});

  \draw[heatgreen!85!black, line width=1.05pt]
    plot[domain=-3.7:3.7, samples=150, smooth, variable=\x]
      (\x,\Tmax,{1.35 + 0.38*sin(70*(\x+0.35)) + 0.22*cos(160*(\x-0.1))});

  \node[timeblue, anchor=west] at (2.15,\tt,0.55) {$P_{T-t_0}f(x)$};
  \node[heatgreen!70!black, anchor=west] at (2.4,\Tmax,1.62) {$f(x)$};

  \fill[pathred] (\BxSlice,\tt,0) circle (2.0pt);
  \node[pathred, above right=1pt] at (\BxSlice,\tt,0) {$(\mathcal B_{t_0},t_0)$};

  \pgfmathsetmacro{\Mval}{0.95 + 0.22*sin(65*(\BxSlice+0.6)) + 0.10*cos(150*(\BxSlice-0.2))}
  \pgfmathsetmacro{\Fval}{1.35 + 0.38*sin(70*(\BxFinal+0.35)) + 0.22*cos(160*(\BxFinal-0.1))}

  \draw[timeblue!90, densely dotted, line width=0.95pt]
    (\BxSlice,\tt,0) -- (\BxSlice,\tt,\Mval);
  \fill[timeblue] (\BxSlice,\tt,\Mval) circle (1.8pt);
  \node[timeblue, above =2pt] at (\BxSlice,\tt,\Mval) {$P_{T-t_0}f(\mathcal B_{t_0})$};

  \draw[heatgreen!70!black, densely dotted, line width=0.95pt]
    (\BxFinal,\Tmax,0) -- (\BxFinal,\Tmax,\Fval);
  \fill[heatgreen!70!black] (\BxFinal,\Tmax,\Fval) circle (1.8pt);
  \node[heatgreen!50!black, above right=-1pt] at (\BxFinal,\Tmax,\Fval) {$f(\mathcal B_T)$};
\end{tikzpicture}
\caption{Schematic of the heat martingale $P_{T-t}f(\mathcal B_t)$}
\end{figure}
where $Uf$ is the
heat extension of $f$ to $\R^d\times \R_{+}$.
Since $Uf(x,T-t)$ solves the backward heat equation
$\partial_t Uf+\tfrac12\Delta Uf=0$,
It\^o's formula implies that $(M_t^T)_{0\le t\le T}$
is a martingale and
\begin{equation}\label{eq:dM}
\mathrm dM_t^T = \sum_{k=1}^d \partial_k Uf(Y_t^T)\,
\mathrm d\mathcal B_t^k,
\end{equation}
where $\partial_k Uf$ is the derivative along $x_k$.
Finally, we can for any $i,j\in \{1,\dots, d\}$
define the martingale
\begin{equation*}
Z^T_t=\int_0^t \partial_j Uf(Y^T_s)\diff \mathcal B^i_s.
\end{equation*}
It is easy to check that
\begin{equation*}
\langle Z^T_t, Z^T_t\rangle
=\int_0^t |\partial_j Uf(Y^T_s)|^2\mathrm{d}s
\end{equation*}
and thus $\langle Z^T_t,Z^T_t\rangle
\le \langle M^T_t,M^T_t\rangle$. Moreover, one can prove
(see~\cite[Theorem 3.9.1]{Banuelos}) that 
\begin{equation*}
R_i R_j f(x)=
\lim_{T\to \infty}\mathbb E\left(
Z^T_T|Y^T_T=(x,0)\right),
\end{equation*}
with the convergence taking place in $L^p$.
This argument, together with Burkholder's
differential subordination result,
Theorem~\ref{thm:burkholder},
implies
\begin{align*}
\|R_i R_j f\|_{L^p}&\le \lim_{T\to \infty}
\left\|\mathbb E\left(Z^T_T|Y^T_T=(x,0)\right)
\right\|_{L^p}
\\ &\le
\limsup_{T\to \infty}\left\|Z^T_T\right\|_{L^p}
\\ &\le
(p^*-1)\limsup_{T\to \infty}\left\|M_T^T\right\|_{L^p}
\\ &\le 
(p^*-1)\|f\|_{L^p},
\end{align*}
which is exactly the kind of bound we are interested in.
To prove the last inequality, notice that
for $\phi\in C_0(\R^d)$
\begin{equation*}
    \int_{\R^d}\mathbb E_x(\phi(\mathcal B_t))\diff x=
    \int_{\R^d}\int_{\R^d}\phi(y)h_t(x,y)\diff y\diff x
    =\int_{\R^d}\phi(y)\diff y,\end{equation*}
where $h_t(x,y)$ denotes the transition probability
from $(x,0)$ to $(y,t)$, we used Fubini
and that $\int_{\R^d}h_t(x,y)\mathrm{d}x=1$.
Thus in particular $\mathcal B_t$ is distributed like
the Lebesgue measure and $\mathbb 
E\left(|f(\mathcal B_t)|^p\right)=\|f\|_{L^p}^p$,
so working with the 
martingales gives us the same norms as working with 
functions; this in particular implies that
the martingales considered
will be uniformly integrable (for details on
the theory of martingales
on $\sigma$-finite measure spaces, 
see~\cite[Chapter 3]{analysisBS},
~\cite[sections 39-43]{dellacherie}
and~\cite[chapters 5-6]{Stroock}).\par
We want to apply this framework to estimate $\|R\otimes
R\|$. For this, it will suffice to provide a 
higher-dimensional analogue of the above
martingale $Z^T$ so that it
is differentially subordinate to
$Uf(Y_t^T)$ for $f\in C^\infty_c(\R^d,\R^d)$,
where the heat extension is defined component-wise.
This is not a priori trivial, 
as the transition from scalar-valued to
matrix-valued functions
risks introducing some dimension-dependent 
factors,
as seen in Proposition ~\ref{prop:rotationbound}.
However, we can leverage the orthogonality
of the coordinates of $\mathcal B$ to obtain
the result.
\begin{proposition}\label{prop:upperbound}
For any $f\in L^p(\R^d, M_d(\R))$ we have
\begin{equation*}
    \|(R\otimes R) f\|_{L^p}\le (p^*-1)\|f\|_{L^p}.
\end{equation*}
\end{proposition}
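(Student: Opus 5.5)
By density it suffices to take $f\in C^\infty_c(\R^d,M_d(\R))$. I will represent $(R\otimes R)f$ as the limit of conditional expectations of a matrix-valued martingale transform of the heat martingale of $f$, and then apply Burkholder's Theorem~\ref{thm:burkholder} with $H=M_d(\R)$. Let $Uf$ be the componentwise heat extension and $M^T_t=Uf(Y^T_t)$. By \eqref{eq:dM}, for each column index $l$ and row index $j$,
\begin{equation*}
\mathrm dM^T_{j,l}=\sum_{k}\partial_k Uf_{j,l}(Y^T_t)\,\mathrm d\mathcal B^k_t .
\end{equation*}
Motivated by \eqref{eq:RieszKorndef} and the scalar identity for $R_iR_j$, I define the $M_d(\R)$-valued martingale $Z^T$ entrywise by
\begin{equation*}
(Z^T_t)_{i,l}=\sum_{j=1}^d\int_0^t \partial_j Uf_{j,l}(Y^T_s)\,\mathrm d\mathcal B^i_s .
\end{equation*}
By linearity of the scalar representation $R_iR_jg=\lim_T\mathbb E(\int_0^T\partial_jUg\,\mathrm d\mathcal B^i\mid Y^T_T=(x,0))$, we then get $((R\otimes R)f)_{i,l}=\lim_{T\to\infty}\mathbb E((Z^T_T)_{i,l}\mid Y^T_T=(x,0))$ in $L^p$.

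The key step is the differential subordination $\mathrm d\langle Z^T\rangle_t\le \mathrm d\langle M^T\rangle_t$. This is where the orthogonality of the coordinates of $\mathcal B$ avoids the dimensional loss seen in Proposition~\ref{prop:rotationbound}. Since $\mathrm d\langle\mathcal B^i,\mathcal B^{i'}\rangle=\delta_{ii'}\,\mathrm dt$,
\begin{equation*}
\frac{\mathrm d\langle Z^T\rangle_t}{\mathrm dt}=\sum_{i,l}\Big|\sum_j\partial_jUf_{j,l}\Big|^2
= d\sum_l\Big|\sum_j\partial_jUf_{j,l}\Big|^2 .
\end{equation*}
This naive version is too large by a factor $d$. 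So instead I will use the transposed arrangement, in which each Brownian direction carries a single divergence term:
\begin{equation*}
(Z^T_t)_{i,l}=\int_0^t\sum_j \partial_jUf_{j,l}\,\mathrm d\mathcal B^i_s .
\end{equation*}
This is the same expression, so the bound still carries the factor $d$. The fix is to note that we only need equality in law of the terminal conditional expectation. Using $R_iR_j=R_jR_i$, I will instead take
\begin{equation*}
(Z^T_t)_{i,l}=\sum_j\int_0^t\partial_i Uf_{j,l}\,\mathrm d\mathcal B^j_s ,
\end{equation*}
which still represents $\sum_jR_jR_if_{j,l}=((R\otimes R)f)_{i,l}$. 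Its quadratic variation density is
\begin{equation*}
\sum_{i,l}\sum_j|\partial_iUf_{j,l}|^2=\sum_{j,l}\sum_i|\partial_iUf_{j,l}|^2=\frac{\mathrm d\langle M^T\rangle_t}{\mathrm dt},
\end{equation*}
giving exact subordination with no dimensional factor. Choosing this arrangement, so that the Itô integrand is a genuine rearrangement of the full gradient of $Uf$ with each $\mathrm d\mathcal B^j$ paired with distinct entries, is the point I expect to be the main obstacle. I will verify it carefully, including the representation formula with the indices swapped.

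Finally, since $Z^T_0=M^T_0-M^T_0$ can be arranged to vanish (subtract the initial value, or note that Burkholder's theorem applies to $M^T-M^T_0$, whose norm is controlled as $T\to\infty$ because $P_Tf\to0$ in $L^p$), Theorem~\ref{thm:burkholder} gives $\|Z^T_T\|_{L^p}\le(p^*-1)\|M^T_T\|_{L^p}=(p^*-1)\|f\|_{L^p}$. Here the last equality uses that $\mathcal B_T$ is distributed like Lebesgue measure. Conditional expectation is an $L^p$ contraction on the $\sigma$-finite space, so letting $T\to\infty$ yields $\|(R\otimes R)f\|_{L^p}\le(p^*-1)\|f\|_{L^p}$, exactly as in the scalar computation preceding the proposition. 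Density then extends the bound to all of $L^p$.
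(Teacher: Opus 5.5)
Your final construction $(Z^T_t)_{i,l}=\sum_j\int_0^t\partial_iUf_{j,l}\,\mathrm d\mathcal B^j_s$ is exactly the paper's $\mathcal Z^T$: it sums over the Brownian index, so orthogonality of the coordinates of $\mathcal B$ gives $\langle \mathcal Z^T\rangle=\langle M^T\rangle$, and it is followed by the same Burkholder-plus-conditional-expectation argument, so your proof is correct and essentially the paper's (your handling of the nonzero initial value $M^T_0=P_Tf(\mathcal B_0)$ is, if anything, slightly more careful). You should delete the two discarded definitions of $Z^T$, the second of which is literally identical to the first, since they only obscure the argument.
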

\begin{proof}
To represent $R\otimes R (f)_{i,j}$,
we define the process
\begin{equation*}
\left(\mathcal Z^T_t\right)_{i,j}\mathrel{:}=
\sum_{k=1}^d\int_0^t \partial_i  U(f_{k,j})(Y_s^T)
\diff \mathcal B^k_s.
\end{equation*}
The advantage of this construction
(summing over the index of the Brownian motion, instead
of summing over $\partial_i$) is that we can now
leverage the orthogonality of the coordinates
of $\mathcal B$,
which implies
\begin{equation*}
\langle \mathcal Z^T,\mathcal Z^T\rangle=\sum_{i,j}
\sum_{k=1}^d \int_0^t |\partial_i U(f_{k,j})(Y_s^T)|^2
\mathrm{d}s
=\int_0^t |\nabla U(f)(Y_s^T)|^2\mathrm{d}s.
\end{equation*}
It follows that $\langle \mathcal Z^T,
\mathcal Z^T\rangle=\langle M^T,M^T\rangle$,
so $\mathcal Z^T$ is differentially subordinate
to $M^T$.
As one easily checks,
even in the matricial case we have convergence
in $L^p$ as $T\to \infty$
of $(R\otimes R)_T(f)(x)\mathrel{:}=
\mathbb E\left(\mathcal Z^T_T|Y_T^T=(x,0)\right)
$ to $R\otimes R(f)$. This allows us to obtain
\begin{align*}
\|R\otimes R(f)\|_{L^p(M_d(\mathbb C))}\le 
\lim_{T\to \infty}\vvvert{\mathcal Z^T}_{L^p}\le
\lim_{T\to \infty}(p^*-1)\vvvert{M^T}_{L^p}\le
(p^*-1)\|f\|_{L^p}.
\end{align*}
\end{proof}
\begin{remark}
The martingale formula we used is not the best
possible when one is concerned with
the scalar second-order Riesz transform $R_iR_j$:
in particular, taking $A=-e_{i,j}-e_{j,i}$
(where $e_{i,j}$ is the matrix
which is 1 in the $(i,j)$ 
position and $0$ everywhere else)
allows one to see that
$\|R_{i}R_j\|_{L^p}\le 1/2 (p^*-1)$ (and the 
inequality is actually attained,
see~\cite{Banuelos_Osekowski_2012}). However,
using this choice of $A$ makes it harder to use the
orthogonality properties of $\mathcal B_t$ and results
in an overall worse bound.
As we shall see presently, we still have
a linear algebra trick up our sleeve 
to improve the bound.
\end{remark}
This, however, is not enough to imply the upper bound
in Theorem~\ref{thm:Korn}. We first need to take
advantage of the problem's structure via
the following lemma:
\begin{lemma}\label{lemma:linalg1}
Let $a\in \R^d\otimes \mathrm{Sym}(d)$ be a tensor.
Then
\begin{equation*}
\sum_{i,j,k}|a_{i,j,k}-a_{j,i,k}|^2\le 3 |a|^2
\end{equation*}
and the constant $3$ is sharp.
\end{lemma}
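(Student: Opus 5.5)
The plan is to reduce the claim to a lower bound on a single bilinear quantity, prove that bound with one nonnegative square, and then exhibit an explicit extremal tensor.

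\textbf{Reduction.} Write $a\in\R^d\otimes\mathrm{Sym}(d)$ as $a_{i,j,k}$ with $a_{i,j,k}=a_{i,k,j}$, the symmetric pair being the last two indices. Expanding the square gives
\begin{equation*}
\sum_{i,j,k}|a_{i,j,k}-a_{j,i,k}|^2 = 2|a|^2 - 2X,
\qquad X\mathrel{:}=\sum_{i,j,k}a_{i,j,k}a_{j,i,k}.
\end{equation*}
The claim is therefore equivalent to the one-sided bound $X\ge -\tfrac12|a|^2$. Equivalently, the transposition of the first two indices has no eigenvalue below $-\tfrac12$ on the subspace of tensors symmetric in the last two indices.

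\textbf{Main step.} To prove $X\ge-\tfrac12|a|^2$, I would use the nonnegativity of the cyclically symmetrised tensor:
\begin{equation*}
0\le \sum_{i,j,k}\left|a_{i,j,k}+a_{j,k,i}+a_{k,i,j}\right|^2 .
\end{equation*}
The three diagonal terms each give $|a|^2$. There are three types of cross term, $\sum a_{i,j,k}a_{j,k,i}$, $\sum a_{i,j,k}a_{k,i,j}$ and $\sum a_{j,k,i}a_{k,i,j}$, each appearing twice. I would relabel indices and use the symmetry $a_{j,k,i}=a_{j,i,k}$ to show that each of them equals $X$. For example,
\begin{equation*}
\sum a_{i,j,k}a_{j,k,i}=\sum a_{i,j,k}a_{j,i,k}=X,
\end{equation*}
and the other two reduce to $X$ in the same way. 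This yields $0\le 3|a|^2+6X$, which is exactly the required bound.

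Conceptually, this is the representation-theoretic fact that on the space of tensors symmetric in the last two indices the sign representation of $S_3$ is absent. Consequently the sum of the three transpositions is positive semidefinite there, which gives $2X+|a|^2\ge0$. The only genuine care needed is in this index bookkeeping, namely checking that all three cross terms really coincide with $X$ under the symmetry. I expect that to be the main, if modest, obstacle.

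\textbf{Sharpness.} Equality requires the cyclic sum to vanish identically, so I would look for a tensor with $a_{i,j,k}+a_{j,k,i}+a_{k,i,j}=0$ that is symmetric in the last two indices. Take $d\ge2$ and set
\begin{equation*}
a_{1,1,2}=a_{1,2,1}=1,\qquad a_{2,1,1}=-2,
\end{equation*}
with all other entries zero. Then $|a|^2=6$. The left-hand side collects $(a_{1,2,1}-a_{2,1,1})^2=9$ and $(a_{2,1,1}-a_{1,2,1})^2=9$, while all remaining terms vanish, for a total of $18=3|a|^2$. Hence the constant $3$ cannot be improved.
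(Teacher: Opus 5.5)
Your proof is correct, and it takes a different route from the paper's.

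\textbf{The paper's route.} The paper argues spectrally. With $\pi$ the orthogonal projection of $(\R^d)^{\otimes 3}$ onto $W=\R^d\otimes\mathrm{Sym}(d)$ and $\sigma$ the swap of the first two indices, it shows (by a computation left to the reader) that $T=\pi\sigma\pi$ satisfies $(T-I)(2T+I)=0$ on $W$. By self-adjointness the spectrum of $T$ lies in $\{1,-\tfrac12\}$, hence $\langle a,\sigma a\rangle\ge-\tfrac12|a|^2$.

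\textbf{Your route.} You give a one-line sum-of-squares certificate instead. Your bookkeeping checks out: each of the three cross terms equals $X$ after using $a_{i,j,k}=a_{i,k,j}$ and relabelling. The argument in fact yields the exact identity
\[
\sum_{i,j,k}|a_{i,j,k}-a_{j,i,k}|^2=3|a|^2-\frac13\sum_{i,j,k}|a_{i,j,k}+a_{j,k,i}+a_{k,i,j}|^2 ,
\]
valid for all $a\in W$. This proves the bound and characterises the equality case at the same time. For $a\in W$, the quantity $\tfrac13(a_{i,j,k}+a_{j,k,i}+a_{k,i,j})$ is exactly the full symmetrisation of $a$, so equality holds precisely when the totally symmetric part of $a$ vanishes.

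\textbf{Comparison.} Your explicit extremiser makes the sharpness claim airtight. The paper's argument only shows that the eigenvalues lie in $\{1,-\tfrac12\}$, and leaves implicit that $-\tfrac12$ actually occurs, i.e.\ that $T\neq I$ on $W$. What the paper's approach buys is the full spectral picture of the compressed swap: eigenvalue $1$ on fully symmetric tensors and $-\tfrac12$ on their orthogonal complement in $W$, which matches your identity. That operator-theoretic viewpoint carries over naturally to computing norms of the modified operators $\mathcal L_{\alpha,\beta}$ and the trace-corrected quantity in Lemma~\ref{lemma:linalg2}.

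\textbf{A minor imprecision.} Since $\sigma$ does not map $W$ into itself, your remark that the transposition has no eigenvalue below $-\tfrac12$ on $W$ should refer to the compression $\pi\sigma\pi$ (equivalently, the quadratic form $a\mapsto\langle a,\sigma a\rangle$ on $W$), not to $\sigma$ itself. This does not affect your proof.
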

\begin{proof}
Let $V=(\R^d)^{\otimes 3}$ and let $W$ be the subspace 
$W=\R^d\otimes \mathrm{Sym}(d)$.
We define the operators $\sigma, \tau$
on $V$ as follows: 
$\sigma(a)_{i,j,k}=a_{j,i,k}$ and 
$\tau(a)_{i,j,k}=a_{i,k,j}$ or
in other words $\sigma(a)(x,y,z)=a(y,x,z)$ and
$\tau(a)(x,y,z)=a(x,z,y)$. Then proving the
inequality is equivalent to proving that,
on $W$, we have
\begin{equation*}
|(I-\sigma)a|^2\le 3 |a|^2.
\end{equation*}
Since $|(I-\sigma)a|^2=2|a|^2-2\langle a,\sigma a\rangle
=2|a|^2-2\langle a, \pi\circ \sigma a\rangle$,
where $\pi$ is the projection from $V$ onto $W$,
it is enough to prove that the minimal eigenvalue
$\lambda_{\mathrm{min}}$ of the operator
$T\mathrel{:}=\pi\circ \sigma\circ \pi$
acting on $W$ is $-1/2$.
As one can check via some arithmetic, $T$ satisfies
\begin{equation*}
    T^2=\frac14(I+\tau)+\frac 12 T.
\end{equation*} Restricting this to $W$ we get
\begin{equation*}
    T^2=\frac 12 I +\frac 12 T,
\end{equation*}
i.e. $(T-\mathrm{I})(2T+I)=0$. The result follows,
since $T$ is self-adjoint and hence diagonalizable.
\end{proof}
\begin{proof}[Proof of the upper bound
in Theorem~\ref{thm:Korn}]
Instead of dealing with the martingale
representation of $R\otimes Rf$
and then applying the projection, we directly deal with
the martingale representation of
$R\otimes Rf-R\otimes Rf^t$,
with the added assumption that
$f\in L^p(\R^d, \mathrm{Sym}(d))$.
Namely, we set
\begin{equation*}
(Z^T_t)_{i,j}\mathrel{:}=
\sum_{k=1}^d \int_0^t \partial_i Uf_{j,k}(Y_s^T)-
\partial_j U f_{i,k}(Y^T_s)\mathrm d\mathcal B_s^k.
\end{equation*}
The quadratic variation of this process is
\begin{equation*}
\langle Z^T_t\rangle=
\int_0^t \sum_{k,i,j} |\partial_i Uf_{j,k}(Y_s^T)-
\partial_j U f_{i,k}(Y_s^T)|^2\mathrm ds.
\end{equation*}
Notice that the tensor
$a_{i,j,k}\mathrel{:}=\partial_i Uf_{j,k}$
is symmetric in the last two variables.
We can thus apply Lemma~\ref{lemma:linalg1}
to obtain
\begin{equation*}
\mathrm{d}\langle Z^T_t\rangle
\le 3 \mathrm{d}\langle M^T_t\rangle,
\end{equation*}
where $M^T_t$ is defined as above.
The rest of the proof follows by the same argument as in
Proposition~\ref{prop:upperbound}.
\end{proof}
By using a little more linear algebra, we can prove that something slightly stronger holds:
\begin{proposition}
    There exists a function $\mathfrak K(d)$ which is monotone
    increasing and such that $\lim_{d\to \infty}
     \mathfrak K(d)=1$
    such that
    \begin{equation*}
        C(p,d)\le \sqrt{3}\mathfrak K(d)(p^*-1).
    \end{equation*}
    More precisely,
    \[
    \mathfrak K(d)\mathrel{:}=\frac{\sqrt{d+2}(d-1)}{\sqrt{3d}+(d-2)\sqrt{d+2}}.
    \]
\end{proposition}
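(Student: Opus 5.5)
The plan is to exploit a gauge freedom that the proof of the upper bound in Theorem~\ref{thm:Korn} did not use. For $g$ scalar, $P_{\mathrm{Skew}}(R\otimes R)(g\,\mathrm{Id})=P_{\mathrm{Skew}}(R_iR_jg)_{i,j}=0$. Hence, writing $f=\mathcal E(u)$, we have for every $c\in\R$
\begin{equation*}
\mathcal A(u)=2P_{\mathrm{Skew}}(R\otimes R)\bigl(f+c\,\mathrm{Tr}(f)\,\mathrm{Id}\bigr).
\end{equation*}
I would run the martingale argument with $f_c:=f+c\,\mathrm{Tr}(f)\mathrm{Id}$ in place of $f$. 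The antisymmetrised process $Z^T$ is unchanged, because the added term is killed by $I-\sigma$. The comparison martingale $M^T$, however, now becomes the heat martingale of $f_c$. Two quantities then have to be balanced. The first is the pointwise operator norm $n(c)$ of $f\mapsto f_c$ on $\mathrm{Sym}(d)$. Since this map is $1$ on $\mathrm{Sym}_0(d)$ and $1+cd$ on $\mathrm{span}(\mathrm{Id})$, we get $n(c)=\max(1,|1+cd|)$, so $\|f_c\|_{L^p}\le n(c)\|f\|_{L^p}$. The second is the best constant $\lambda(c)$ in a modified Lemma~\ref{lemma:linalg1}.

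The modified lemma is as follows. Let $a_{i,j,k}=\partial_iUf_{j,k}$, let $b_i=\partial_i U\mathrm{Tr}f$, and let $a^c=a+c\,b\otimes\mathrm{Id}$. We want
\begin{equation*}
|(I-\sigma)a|^2\le \lambda(c)\,|a^c|^2 \quad\text{for all } a\in W=\R^d\otimes\mathrm{Sym}(d).
\end{equation*}
Then $\mathrm d\langle Z^T\rangle\le\lambda(c)\,\mathrm d\langle M^T_c\rangle$, and exactly as in Proposition~\ref{prop:upperbound} this gives
\begin{equation*}
C(p,d)\le \sqrt{\lambda(c)}\,n(c)\,(p^*-1).
\end{equation*}
The task is to minimise $\sqrt{\lambda(c)}\,n(c)$ over $c$ and to check that the minimum equals $\sqrt3\,\mathfrak K(d)$.

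To compute $\lambda(c)$, I would decompose $W$ into $\mathrm O(d)$-isotypic pieces. One piece is the "trace vectors" $\{v\otimes\mathrm{Id}\}$. A second is the piece $\{\mathrm{sym}(e_j\otimes v)\text{-type}\}$ obtained by contracting $a$ with $\delta_{ij}$. The remaining pieces carry no $\R^d$ component. On each piece both quadratic forms are scalar multiples of $|a|^2$, except on the two copies of $\R^d$, where one gets a $2\times 2$ generalised eigenvalue problem in $(c,d)$. Concretely, I would parametrise the vector part as $a=\alpha\, v\otimes\mathrm{Id}+\beta\,(\text{symmetrised } v\otimes e_j\otimes e_j\text{ piece})$ and compute $|(I-\sigma)a|^2$, $|a|^2$ and $\langle a,b\otimes\mathrm{Id}\rangle$ explicitly; here $b$ is the trace contraction, a linear function of $\alpha,\beta$. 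On the complement, the relation $(T-I)(2T+I)=0$ from Lemma~\ref{lemma:linalg1} still gives the constant $3$.

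The main obstacle is this $2\times2$ problem together with the optimisation in $c$. I expect the optimum to occur where the $\R^d$-block eigenvalue equals $3/n(c)^2$ or is balanced against it. The square roots $\sqrt{3d}$ and $\sqrt{d+2}$ in $\mathfrak K(d)$ suggest that the optimal $c$ solves a quadratic whose discriminant involves $3d(d+2)$. I would first try $c\in[-2/d,0]$, where $n(c)=1$, to see whether that already gives the formula. If not, I would allow $|1+cd|>1$ and minimise $\sqrt{\lambda(c)}\,|1+cd|$ by calculus. Finally, I would check that $\mathfrak K(2)=1/\sqrt2<1$, that $\mathfrak K$ is increasing, and that $\mathfrak K(d)\to1$; the last two follow from the explicit formula.
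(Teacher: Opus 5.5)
Your proposal has a genuine gap: the one-parameter family you chose cannot beat $\sqrt3$ in any dimension, so the minimisation you defer can never return $\sqrt3\,\mathfrak K(d)$.

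\textbf{Why the family fails.} You keep $Z^T$ and only replace the comparison martingale by that of $f_c$. Your bound is therefore $\sqrt{\lambda(c)}\,n(c)$ with $n(c)\ge1$, so you would need $\lambda(c)<3$. But $|a^c|^2=|a|^2+\frac{(1+cd)^2-1}{d}|b|^2$ sees $a$ only through $b=\mathrm{tr}_{23}a$. Your own remark that the non-vector pieces still carry the constant $3$ already settles the matter for $d\ge3$. For example, $a=e_1\otimes(e_2\otimes e_3+e_3\otimes e_2)-e_2\otimes(e_1\otimes e_3+e_3\otimes e_1)\in W$ has all contractions zero, $|a|^2=4$ and $\langle a,\sigma a\rangle=-2$. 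Hence $|(I-\sigma)a|^2=12=3|a|^2$, while $a^c=a$ for every $c$. So $\lambda(c)\ge3$ for all $c$, and $\min_c\sqrt{\lambda(c)}\,n(c)=\sqrt3$, attained at $c=0$. Building $Z^T$ from $f_c$ instead would give $\sqrt3\,n(c)$, which is no better.

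The $\R^d$-block does not help either, and for $d=2$ it is the only block on which $I-\sigma$ is nonzero. Solving your $2\times2$ problem with $s=1+cd$ gives
\[
\max(1,s^2)\,\lambda_{\mathrm{vec}}(s)=\max(1,s^2)\Bigl(\frac{d+2}{d}+\frac{2(d-1)}{d s^2}\Bigr)\ge 3,
\]
with equality only at $|s|=1$. (Also, $\mathfrak K(2)=2/\sqrt6$, not $1/\sqrt2$.)

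\textbf{The missing idea.} The freedom has to be used on the \emph{output} side, keeping the comparison martingale equal to the heat martingale of $\mathcal E(u)$ itself, so there is no loss $n(c)$. Your claim that $I-\sigma$ kills $c\,b\otimes\mathrm{Id}$ is false pointwise: $(I-\sigma)(b\otimes\mathrm{Id})_{i,j,k}=b_i\delta_{jk}-b_j\delta_{ik}\neq0$. Only its conditional expectation vanishes in the limit. Up to sign and index conventions, this is exactly the paper's term $J\circ\mathrm{tr}_{23}$.

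The paper uses $\mathcal L_{\alpha,\beta}=\alpha(\mathrm{Id}-\sigma)+(1-\alpha)J\circ\mathrm{tr}_{13}+\beta J\circ\mathrm{tr}_{23}$. Here $J\circ\mathrm{tr}_{13}$, built from $\mathrm{div}\,\mathcal E(u)$, represents the same operator as $\mathrm{Id}-\sigma$. The parameter $\alpha$ is what breaks $\sqrt3$:
\begin{itemize}
\item on the traceless block, $\mathcal L_{\alpha,\beta}=\alpha(\mathrm{Id}-\sigma)$ has norm $\sqrt3|\alpha|$;
\item on the $\R^d$-block, after optimising in $\beta$, the squared norm is $\frac{d+2}{d}\bigl((d-1)-\alpha(d-2)\bigr)^2$.
\end{itemize}
For $d\ge3$, balancing these gives $\alpha=\mathfrak K(d)$ and operator norm $\sqrt3\,\mathfrak K(d)$. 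For $d=2$ the vector block alone gives $\sqrt2=\sqrt3\,\mathfrak K(2)$. No trace-based rescaling of the input can mimic the factor $\alpha$, because the extremal traceless block is invisible to every trace.
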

\begin{proof}
The bound will be improved
        by taking advantage of the different
        representations of the operator $\mathcal E(u)\mapsto\mathcal A(u).$ More precisely, let $\mathcal L_{s,t}:\R^d\otimes \mathrm{Sym}(d)\to \R^d\otimes \R^d\otimes \R^d$
        be defined as
        \begin{equation*}
            \mathcal L_{\alpha,\beta}(a)\mathrel{:}=\alpha(\mathrm{Id}-\sigma)(a)+(1-\alpha)J\circ \mathrm{tr}_{13}(a)+\beta J\circ \mathrm{tr}_{23}(a),
        \end{equation*}
        where $J:\R^d\to \R^d\otimes \mathrm{Skew}(d)$ is the natural
        inclusion $J(v):=\sum_{k=1}^d e_k\otimes (e_k\wedge v)$
        and $\mathrm{tr}_{ab}$ is the contraction on the coordinates 
        $a,b$ and $\alpha,\beta\in \R$. It is then not hard to see that
        defining the martingale
        \begin{equation*}
            \left(\mathcal Z^{T,\alpha,\beta}_t\right)_{i,j}\mathrel{:}=\sum_k\int_0^t \mathcal L_{\alpha,\beta}(\nabla U\mathcal E(u))(Y^T_s)_{i,j,k}\mathrm{d}
            \mathcal B^k_s
        \end{equation*}
        this also represents $\mathcal A(u)$,
        as one can prove by using the symmetry of 
        $\mathcal E(u)$ and $R_iR_j$.
        It follows that
        \begin{equation*}
            C(p,d)\le (p^*-1)\|\mathcal L_{\alpha,\beta}\|_{\mathrm{Op}}.
        \end{equation*}
        Taking the $\inf$ over $\alpha,\beta$ we obtain
        \begin{equation*}
            C(p,d)\le \sqrt 3\frac{\sqrt{d+2}(d-1)}{\sqrt{3d}+(d-2)\sqrt{d+2}}(p^*-1),
        \end{equation*}
        and this proves the result. Note that the right hand side
        is monotone increasing in $d$ and tends to $\sqrt 3$
        as $d\to \infty$, so this is not an asymptotic improvement.
    \end{proof}
In a very similar manner,
one can prove that\footnote{The same result holds
for $d=2$ as 
well, but the proof there is significantly different,
requiring the use of the induced complex structure of
the problem. This additional structure, if properly
taken advantage of, leads to even stronger bounds,
see~\cite{Volberg}.},
for $d\ge 3$
\begin{equation*}
\|\mathcal A_0(u)\|_{L^p(\R^d)}\le
\sqrt{3}(p^*-1)\|\mathcal E_0(u)\|_{L^p(\R^d)}.
\end{equation*}
Indeed, the exact same proof works, provided one
uses the following lemma in place of lemma 
\ref{lemma:linalg1}.
\begin{lemma}\label{lemma:linalg2}
Let $a\in \R^d\otimes \mathrm{Sym}_0(d)$.
Let $b$ be the tensor defined via
$b_{i,j,k}\mathrel{:}=a_{i,j,k}-a_{j,i,k}$
and $c_k$ the vector defined
as $c_k=\sum_i a_{i,i,k}$. Then
\begin{equation*}
\|b\|^2+\frac{d}{(d-1)^2}\|c\|^2\le C_d \|a\|^2,
\end{equation*}
where $C_2=4$ and for any $d>2$, $C_d=3$.
Moreover, the constant is sharp.
\end{lemma}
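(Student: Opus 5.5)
The strategy is the one used for Lemma~\ref{lemma:linalg1}, with representation theory added. Write $W_0=\R^d\otimes\mathrm{Sym}_0(d)$ and
\[
Q(a)\mathrel{:}=\|b\|^2+\tfrac{d}{(d-1)^2}\|c\|^2=2|a|^2-2\langle a,\sigma a\rangle+\tfrac{d}{(d-1)^2}|\mathrm{tr}_{12}(a)|^2 .
\]
This is a quadratic form on $W_0$ that is invariant under the diagonal action of $\mathrm O(d)$ on $(\R^d)^{\otimes 3}$. I will decompose $W_0$ into $\mathrm O(d)$-irreducibles and use Schur's lemma to reduce the maximisation of $Q(a)/|a|^2$ to a finite computation on one representative of each piece.

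\emph{Step 1: decomposition.} For $d\ge 3$ I claim
\[
W_0=\mathrm{Sym}^3_0\oplus H\oplus V,
\]
where $\mathrm{Sym}^3_0$ is the space of totally symmetric trace-free tensors, $H$ is the trace-free hook component, and $V\cong\R^d$ is the vector component. The vector component consists of the tensors
\[
a^v_{ijk}=v_j\delta_{ik}+v_k\delta_{ij}-\tfrac2d v_i\delta_{jk}.
\]
The three summands are pairwise non-isomorphic, so each appears with multiplicity one. For $d=2$ the hook part vanishes: the dimensions $4=2+2$ already match $\mathrm{Sym}^3_0\oplus V$. Since each summand has multiplicity one, $\pi\sigma\pi$ and $c^*c$ act as scalars on each summand. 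Hence the three summands are $Q$-orthogonal, and it suffices to evaluate the Rayleigh quotient on each.

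\emph{Step 2: the three eigenvalues.}
\begin{itemize}
\item On $\mathrm{Sym}^3_0$ we have $\sigma a=a$ and $c=0$, so $Q=0$.
\item On $H$, the contraction $c$ is an equivariant map into $\R^d$, so it vanishes there by Schur's lemma.
\item The space $H$ is also the trace-free part of the $(-\tfrac12)$-eigenspace of $T=\pi\sigma\pi$ on $\R^d\otimes\mathrm{Sym}(d)$ from the proof of Lemma~\ref{lemma:linalg1}. The reason is that the trace part $\R^d\otimes\mathrm{Id}$ is a vector representation, so it does not interact with $H$. Therefore $\langle a,\sigma a\rangle=-\tfrac12|a|^2$ on $H$, and $Q=3|a|^2$ there.
\item On $V$ a direct computation gives
\[
|a^v|^2=\tfrac{2(d+2)(d-1)}{d}|v|^2,\qquad b=\bigl(1+\tfrac2d\bigr)(v_j\delta_{ik}-v_i\delta_{jk}),
\]
\[
\|b\|^2=\tfrac{2(d+2)^2(d-1)}{d^2}|v|^2,\qquad c=\tfrac{(d+2)(d-1)}{d}v .
\]
Consequently
\[
\frac{Q(a^v)}{|a^v|^2}=\frac{(d+2)(3d-2)}{2d(d-1)} .
\]
\end{itemize}

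\emph{Step 3: conclusion and sharpness.} The ratio on $V$ equals $4$ for $d=2$, equals $35/12<3$ for $d=3$, and is decreasing in $d$. Hence $C_2=4$, attained on $V$ (for $d=2$ there is no hook part). For $d\ge3$ the maximum is $C_d=3$, attained on any nonzero element of $H$. This gives both the inequality and its sharpness.

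The main obstacle is Step 1 together with the hook eigenvalue. I have to justify that the $(-\tfrac12)$-eigenspace of $T$ on $W$ really lies inside $W_0$ (i.e.\ is already trace-free), and that it is nonzero exactly when $d\ge3$. If the representation-theoretic argument becomes delicate, I would instead verify the identity $T_0^2=\tfrac12 I+\tfrac12 T_0$ for the compressed operator $T_0$ on $V^\perp\cap W_0$ directly, as in Lemma~\ref{lemma:linalg1}. For $d\ge 3$, I would also exhibit an explicit hook element, for instance
\[
a=e_1\otimes(e_2\odot e_3)-e_2\otimes(e_1\odot e_3),
\]
with $\odot$ denoting the symmetrised product, and check that it gives $Q=3|a|^2$.
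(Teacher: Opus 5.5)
Your proof is correct, and it follows a genuinely different route from the paper's. The paper does not actually prove this lemma. It only says the trace-free bound follows ``in a very similar manner'' and states the lemma, so the implicit model is the proof of Lemma~\ref{lemma:linalg1}: a polynomial identity for a compressed operator. That route does not transfer verbatim, because $T=\pi\sigma\pi$ does not preserve $W_0=\R^d\otimes\mathrm{Sym}_0(d)$ and the extra term $\frac{d}{(d-1)^2}|c|^2$ has to be absorbed. One would therefore have to find by hand an identity for the compressed form whose roots include a $d$-dependent value. Your decomposition $W_0=\mathrm{Sym}^3_0\oplus H\oplus V$ instead does three things:
\begin{itemize}
\item it yields the full spectrum $\{0,\,3,\,\tfrac{(d+2)(3d-2)}{2d(d-1)}\}$ of $Q(a)/|a|^2$;
\item it identifies the extremisers;
\item it explains why $d=2$ is exceptional.
\end{itemize}
Your computations on $V$ and on the hook element $e_1\otimes(e_2\odot e_3)-e_2\otimes(e_1\odot e_3)$ are right.

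\textbf{The obstacle as stated is false.} You propose to show that the $(-\tfrac12)$-eigenspace of $T$ on $\R^d\otimes\mathrm{Sym}(d)$ lies inside $W_0$. It does not: that eigenspace has dimension $d(d^2-1)/3$, and for $d=2$ it is $2$-dimensional while $H=0$. What you need, and what is true, is that the hook component intersected with $W_0$ is totally trace-free. Hook tensors satisfy $a_{ijk}+a_{jik}+a_{kij}=0$, and contracting $j=k$ gives $\mathrm{tr}_{23}a=-2\,\mathrm{tr}_{12}a$.

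\textbf{The representation theory can be dropped.} You cite two facts without proof: that $H$ is irreducible, and that $H\not\cong\mathrm{Sym}^3_0$. The second is not settled by counting dimensions, since both spaces are $16$-dimensional when $d=4$. Both facts can be bypassed:
\begin{itemize}
\item The computation in Lemma~\ref{lemma:linalg1} already gives $\|b\|^2=3|P_{\mathrm{hook}}a|^2$ on all of $\R^d\otimes\mathrm{Sym}(d)$.
\item On $W_0$ one has $\langle a,a^v\rangle=2\langle c(a),v\rangle$, so $V=(\ker c)^\perp\cap W_0$.
\item Write $a=k+a^v$ with $k\in\ker c\cap W_0$. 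Then $P_{\mathrm{hook}}k$ is still totally trace-free, hence orthogonal to $a^v$.
\item Consequently $Q(a)=3|P_{\mathrm{hook}}k|^2+Q(a^v)\le\max\bigl(3,\tfrac{(d+2)(3d-2)}{2d(d-1)}\bigr)|a|^2$.
\end{itemize}
This gives the bound $C_d$ for every $d\ge2$ at once, without even needing $H=0$ when $d=2$. Sharpness then comes from your explicit elements of $V$ for $d=2$ and of $H$ for $d\ge3$.
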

As in the case of $C(p,d)$, we can work with a slightly more
general representation, this time with an operator
$\mathcal L^0_\alpha$ depending on one parameter only.
This time, however, the improvement is much smaller
(the best improvement one obtains is $C_0(p,6)\le 1.64662(p^*-1)$)
and the closed formula is much more complicated, so we leave the 
calculations to the interested reader.
\begin{remark}
More generally, the above proof can be applied
to any function space where conditional expectations
are well-defined contractions
and a Burkholder-type result holds
(see for example~\cite[chapter 5]{osekowski}),
provided one can ensure
strong convergence as $T\to \infty$.
\end{remark}
\begin{remark}\label{rmk:bellmanisotropic}
These lemmas allow us to take full advantage
of the pointwise restrictions
we have on the range of $\mathcal E(u)$\footnote{Indeed, 
one can prove that no linear operator 
$\mathcal L$ can lead via $\mathcal Z^{\mathcal L}$
to a better bound than the one we have now.}.
The remaining constraints are
only visible at the level of first derivatives of
$\mathcal A(u), \mathcal E(u)$,
see~\cite{Amrouche2006},~\cite{Eastwood2000}.
In principle, it should be possible to use them
(once suitably reframed
in terms of the associated martingales
and their quadratic covariations) to 
define an improved Bellman function for the associated 
Burkholder problem, in a similar fashion as was done
for the Beurling-Ahlfors transform
with conformal martingales in~\cite{Volberg}.
However, in dimension $d>2$, 
the $\mathcal E_0\to \mathcal A_0$
transform loses its isotropy, which 
makes the problem much more difficult. In particular, it
is likely going to be difficult
to fully distinguish the class of heat 
martingales induced by $\mathcal E(u)$ and
$\mathcal A(u)$ by using only the martingales and their
quadratic variations, precisely because the
constraint is not isotropic.
\end{remark}
\subsection{Lower bounds}\label{sec:lowerbound}
We already know that
$\|R\otimes R\|_{L^p(M_d(\C))\to L^p(M_d(\C))}
\le p^*-1$.
It is not hard to prove that
$\|R\otimes R\|_{L^p(M_d(\C))}
\ge c_p\ge \max\left\{1/2(p^*-1),1\right\}$,
proving that our estimate is sharp up to the factor $2$.
\begin{proposition}
\label{prop:lowerboundriesz}
For $p\in(1,\infty)$ and $d\in \N\setminus\{0,1\}$,
\begin{equation*}
\max \left\{\frac12(p^*-1),1\right\}
\le c_p
\le \|R\otimes R\|_{
L^p(\R^d,M_d(\mathbb{R}))\to L^p(\R^d, M_d(\mathbb{R}))}
\le p^*-1.
\end{equation*}
\end{proposition}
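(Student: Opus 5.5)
The upper bound $\|R\otimes R\|\le p^*-1$ is exactly Proposition~\ref{prop:upperbound}, so only the lower bounds need proof. Here I read $c_p$ as the norm of a suitable restriction of $R\otimes R$, for instance to functions of the form $F=f\,\xi$ with $f$ scalar and $\xi$ a fixed vector. The bound $c_p\le\|R\otimes R\|$ is then immediate. I will prove the two lower bounds separately, testing $R\otimes R$ on matrix fields with a single nonzero column.

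\emph{The bound $\max\{\cdot\}\ge 1$.} The Fourier multiplier of $R\otimes R$ acts on a column vector $v$ by $v\mapsto -\frac{\xi\otimes\xi}{|\xi|^2}v$. This is (minus) the orthogonal projection onto the direction $\xi$, so it is idempotent up to sign: $(R\otimes R)^2=-(R\otimes R)$ on column-valued functions.

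Take $G$ equal to a nonzero element of the range of $R\otimes R$ that lies in $L^p$ for every $p$. A convenient choice is a gradient field $G=\nabla\phi$, $\phi\in C^\infty_c$, placed in the first column. Its Fourier transform is parallel to $\xi$, so $(R\otimes R)G=-G$, which forces the operator norm to be at least $1$ on $L^p$. The only thing to check is that this equality holds exactly for such $G$, which is clear from the multiplier.

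\emph{The bound $\tfrac12(p^*-1)$.} I reduce to the scalar operator $R_1R_2$. Put $f\in L^p(\R^d)$ in the entry $(2,1)$ of $F$, with all other entries zero. Then
\[
((R\otimes R)F)_{1,1}=R_1R_2 f,
\]
so $\|(R\otimes R)F\|_{L^p}\ge \|R_1R_2 f\|_{L^p}$ while $\|F\|_{L^p}=\|f\|_{L^p}$. It therefore suffices to know that $\|R_1R_2\|_{L^p\to L^p}\ge\frac12(p^*-1)$. This is the sharp result of Bañuelos–Osękowski~\cite{Banuelos_Osekowski_2012} recalled in the remark after Proposition~\ref{prop:upperbound}, where it is noted that the bound $\frac12(p^*-1)$ is attained, and I would simply invoke it.

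\emph{Fallback for the main obstacle.} The main obstacle is this last lower bound if one wanted a self-contained argument. In dimension $2$, $2R_1R_2$ is the imaginary part of the Beurling–Ahlfors multiplier $\bar\xi/\xi$. I would then transfer Lehto's lower bound $\|T_{BA}\|\ge p^*-1$ to the real or imaginary part using rotation invariance: conjugating by a rotation by $\pi/4$ turns $R_1^2-R_2^2$ into $2R_1R_2$. For $d>2$, the bound follows from the $2$-dimensional one by the standard tensorisation or de Leeuw-type argument, which shows that norms of multipliers depending only on $(\xi_1,\xi_2)$ do not decrease when passing to higher dimensions. Taking the maximum of the two lower bounds gives the claim.
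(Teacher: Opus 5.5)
Your outer sandwich $\max\{\frac12(p^*-1),1\}\le\|R\otimes R\|_{L^p\to L^p}\le p^*-1$ is correct. The symbol $-\xi\xi^t/|\xi|^2$ acts on columns as minus a projection, so $(R\otimes R)(\nabla\phi\otimes e_1)=-\nabla\phi\otimes e_1$, and the $(2,1)$ placement reduces to the sharp value $\|R_1R_2\|_{L^p}=\frac12(p^*-1)$.

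The gap is the middle term. In the paper, $c_p$ is not a restricted norm of $R\otimes R$. It is $c_p=\|R_i^2\|_{L^p(\R^d)\to L^p(\R^d)}$, the norm of the scalar second-order Riesz transform (Choi's constant, \cite{Banuelos_Osekowski_2012}). The proposition asserts that this specific constant lies between the two bounds. You redefine $c_p$ so that $c_p\le\|R\otimes R\|$ becomes automatic, which leaves both inequalities involving the actual $c_p$ unproved. Even on your own reading, $\nabla\phi\otimes e_1$ is not of the form $f\xi$, so your argument for the bound $1$ says nothing about your restricted norm.

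The paper handles the middle inequality by putting $f$ in the $(1,1)$ slot, so that $((R\otimes R)F)_{1,1}=R_1^2f$ and hence $c_p\le\|R\otimes R\|$. Your $(2,1)$ placement would do the same, since the $(2,1)$ entry of the output is $R_2^2f$. The left inequality then follows from ingredients you already have. First, $c_p\ge1$, because the $L^p$ operator norm of a Fourier multiplier dominates the supremum of its symbol $\xi_1^2/|\xi|^2$. Second, $c_p\ge\frac12(p^*-1)$: a rotation by $\pi/4$ in the $(x_1,x_2)$-plane conjugates $2R_1R_2$ into $\pm(R_1^2-R_2^2)$, so $p^*-1=\|R_1^2-R_2^2\|_{L^p}\le\|R_1^2\|_{L^p}+\|R_2^2\|_{L^p}=2c_p$.

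Separately, your self-contained fallback would fail as written. Write $T_{BA}=A+iB$ with $A,B$ real operators. Rotation invariance does give $\|A\|=\|B\|$. But from Lehto's bound $\|A+iB\|\ge p^*-1$, the triangle inequality only yields $\|A\|\ge\frac12(p^*-1)$, i.e.\ $\|R_1R_2\|_{L^p}\ge\frac14(p^*-1)$, which is a factor $2$ short. A lower bound for the complex-linear operator on complex-valued functions does not transfer to its real part acting on real-valued functions. The real-variable bound $\|2R_1R_2\|_{L^p}\ge p^*-1$ is a different theorem (Geiss, Montgomery-Smith and Saksman), proved by transferring Burkholder's extremal martingale transforms. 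Simply invoking it, as in your main line, is the right choice. Also, for $d>2$ the symbol $\xi_1\xi_2/|\xi|^2$ depends on all of $\xi$. Passing from $d=2$ therefore requires a de Leeuw-type restriction argument, not a statement about multipliers depending only on $(\xi_1,\xi_2)$.
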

\begin{proof}
The upper bound has already been obtained.
To prove the lower bound, recall that
$\|R_i^2\|_{L^p(\R^d)\to L^p(\R^d)}=c_p$
(see~\cite{Banuelos_Osekowski_2012}).
Given $\varepsilon>0$, choose
$f\in L^p\colon \|f\|_{L^p}=1$
and $\|R_1R_1 f\|\ge c_p-\varepsilon$. Then choosing
$M(x)$ defined as
\begin{equation*}
M(x)_{a,b}=\delta_1^a\delta_1^bf(x)
\end{equation*}
a straightforward computation proves that
$|R\otimes R (M)(x)|\ge |R_{1}R_1f(x)|$
and the claim follows by taking the $p$-norm
and letting $\varepsilon\to 0$.
\end{proof}
\begin{figure}[htbp] 
\centering 
\includegraphics[width=0.7\textwidth]{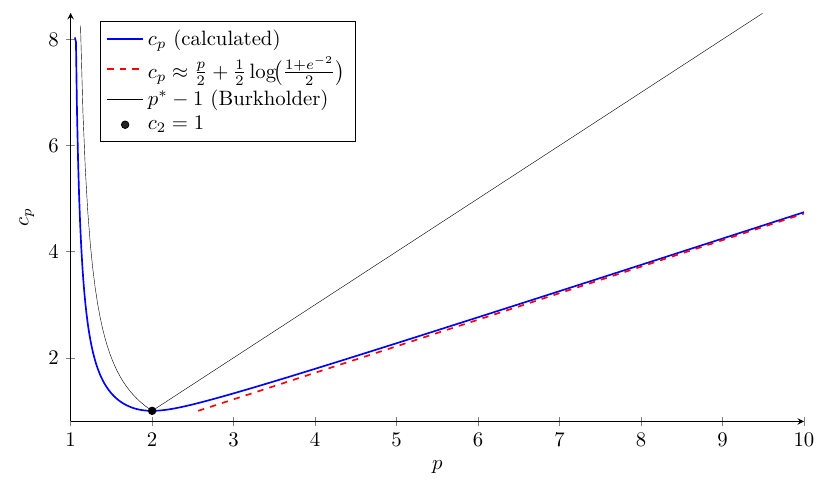}
\caption{Upper and lower bounds for
$\|R\otimes R\|_{L^p}$}
\label{fig:choi_plot} 
\end{figure}
In this section we will provide explicit
witnesses to the bound $C(p,d)\ge p^*-1$ for $p\ge 2$.
The same result will be proved for general $p$
via different methods,
in section~\ref{sec:calcvar}.\par 
Note that Proposition~\ref{prop:lowerboundriesz}
is not enough to obtain such a bound because
\begin{equation*}
\mathcal E(\dot W^{1,p}(\R^d,\R^d))
\subsetneq L^p(\R^d, \text{Sym}(d)).
\end{equation*} More precisely, 
one can show via Hodge decomposition that 
$\mathcal E(\dot{W}^{1,p}(\R^d,\R^d))$
is a nontrivial complemented
subspace of $L^p(\R^d,\mathrm{Sym}(d))$.
We thus need a different approach to obtain
a lower estimate. First, we reduce ourselves
to the $2$-dimensional case:
\begin{proposition}\label{prop:monotonedim}
For $d\in \N\setminus\{0,1\}$
and $p\in (1,\infty)$ the bound
\begin{equation*}
C(p,d+1)\ge C(p,d)
\end{equation*}
holds.
\end{proposition}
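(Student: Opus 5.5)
Given $u\in C^\infty_c(\R^d,\R^d)$, I will build maps on $\R^{d+1}$ that are, after rescaling, essentially independent of the last variable and have zero last component. Concretely, pick a cutoff $\eta\in C^\infty_c(\R)$, put
\[
v_R(x,s)\mathrel{:}=(u(x)\eta(s/R),0),
\]
and compare the ratio $\|\mathcal A(v_R)\|_{L^p(\R^{d+1})}/\|\mathcal E(v_R)\|_{L^p(\R^{d+1})}$ with $\|\mathcal A(u)\|_{L^p(\R^d)}/\|\mathcal E(u)\|_{L^p(\R^d)}$ as $R\to\infty$. Taking the supremum over $u$ then gives $C(p,d+1)\ge C(p,d)$. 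This uses the observation from the introduction that $C(p,\cdot)$ may be computed with compactly supported smooth maps on all of space.

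First I compute the gradient. It is
\[
\nabla v_R(x,s)=\mat{\eta(s/R)\nabla u(x) & R^{-1}\eta'(s/R)u(x)\\ 0 & 0}.
\]
The upper-left block therefore contributes $\eta(s/R)\mathcal A(u)(x)$ to $\mathcal A(v_R)$ and $\eta(s/R)\mathcal E(u)(x)$ to $\mathcal E(v_R)$. The last row and column each contribute an error term bounded pointwise by $R^{-1}|\eta'(s/R)||u(x)|$.

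Next I estimate the norms. By Fubini,
\[
\|\eta(\cdot/R)\,F\|_{L^p(\R^{d+1})}=R^{1/p}\|\eta\|_{L^p(\R)}\|F\|_{L^p(\R^d)},
\]
while the error term has norm at most $R^{1/p-1}\|\eta'\|_{L^p}\|u\|_{L^p}$, which is finite because $u$ has compact support. Dividing by $R^{1/p}$ and applying Minkowski's inequality, the ratio of the norms converges as $R\to\infty$ to the $d$-dimensional ratio. This requires $\|\mathcal E(u)\|_{L^p}\neq 0$, which holds for any nonzero compactly supported $u$: if $\mathcal E(u)=0$, then $u$ is an infinitesimal rigid motion, and a compactly supported one must vanish.

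The only delicate point is justifying the reduction to compactly supported smooth $u$. Density of $C^\infty_c$ in $\dot W^{1,p}$, together with the domain-independence argument already given in the introduction, settles it. I expect no real obstacle here; the main care needed is in keeping the $R^{1/p}$ normalisations consistent.
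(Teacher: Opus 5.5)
Your proof is correct and is essentially the paper's argument. Both tensor a $d$-dimensional map with a one-variable bump, embed the result in $\R^{d+1}$, and make the off-diagonal column (your $R^{-1}\eta'(s/R)u$, the paper's $g'(x_{d+1})f_d$) negligible by a scaling limit; the only cosmetic difference is that the paper keeps $g$ fixed and concentrates $f_d(rx)$ as $r\to\infty$, whereas you stretch $\eta(s/R)$, and by the dilation invariance of the Korn ratio these are the same construction.
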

\begin{proof}
The result is clearly true for $p=2$
since $C(2,d)=1$ for all $d$,
so let us fix $p\in (1,\infty)\setminus \{2\}$.
We prove the result by induction on $d$.
Given $\varepsilon>0$,
let $f_{d}\colon\R^d\to \R^d$ be such that
\begin{equation*}
\frac{\|\mathcal A(f_d)\|_{L^p}}
{\|\mathcal E(f_d)\|_{L^p}}
\ge C(p,d)-\varepsilon.
\end{equation*}
Let $g\colon \R\to \R$ be a smooth function
such that $\text{supp}(g)\subset [-1,1]$
and $\|g\|_{L^p}=1$.
Then define 
\begin{equation*}
f_{d+1}(x_1,\dots, x_{d+1})=f_d(x_1,\dots, x_d)
g(x_{d+1}),
\end{equation*}
which is to be understood to be $\R^{d+1}$ valued via
the standard embedding $\R^d\hookrightarrow \R^{d+1}$.
If we write $C=2\|g'\|_{\infty}$, then we have
\begin{equation*}
\nabla f_{d+1}=
\begin{pmatrix}
g(x_{d+1})\nabla f_d(x_1,\dots, x_d)&
g'(x_{d+1})f_d(x_1,\dots,x_d)\\
0&0
\end{pmatrix},
\end{equation*}
hence 
\begin{equation*}
\|\mathcal A(f_{d+1})\|_{L^p}
\ge \|\mathcal A(f_d)\|_{L^p}
\end{equation*}
and
\begin{equation*}
\|\mathcal E(f_{d+1})\|_{L^p}
\le \|\mathcal E(f_d)\|_{L^p}+C\|f_d\|_{L^p},
\end{equation*}
hence
\begin{equation*}
C(p,d+1)\ge \frac{\|\mathcal A(f_d)\|_{L^p}}
{\|\mathcal E(f_d)\|_{L^p}+
C\|f_d\|_{L^p}}.
\end{equation*}
To get the result it is then just a matter of
taking $f_{d+1}^r\mathrel{:}=
f_d(rx_1,\dots, rx_d)g(x_{d+1})$
and letting $r\to \infty, \varepsilon\to 0$.
\end{proof}
In $2$ dimensions, the situation simplifies
considerably and we are
able to construct a sequence witnessing
the bound $C(p,2)\ge p-1$.
\begin{proposition}\label{prop:lowerbound2}
For $p\ge 2$, $C(p,2)\ge p-1$.
\end{proposition}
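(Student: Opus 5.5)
The witnesses will be generalised-radial maps whose symmetric and skew parts have a pointwise ratio that approaches $p-1$. The idea is a Korn analogue of the Lehto--Iwaniec extremals $z|z|^{\alpha}$ for the Beurling--Ahlfors operator. Identify $\R^2$ with $\C$ and write $\nabla u$ through the Wirtinger derivatives $u_z, u_{\bar z}$. A short computation gives
\[
|\mathcal A(u)|^2=2(\mathrm{Im}\,u_z)^2,\qquad |\mathcal E(u)|^2=2(\mathrm{Re}\,u_z)^2+2|u_{\bar z}|^2 .
\]
Here the skew part is the infinitesimal rotation $\mathrm{Im}\,u_z$, and the symmetric part consists of the dilation $\mathrm{Re}\,u_z$ together with the anticonformal part $u_{\bar z}$.

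First, take $u(z)=z\,h(|z|)$ with $h(r)=i r^{\alpha}$ and $\alpha\in(-2/p,0)$. Writing $r=|z|$, one has
\[
u_z=h+\tfrac{r}{2}h'=i\bigl(1+\tfrac{\alpha}{2}\bigr)r^{\alpha},\qquad u_{\bar z}=\tfrac{z^2}{2r}h',
\]
so that $|u_{\bar z}|=\tfrac{|\alpha|}{2}r^{\alpha}$. Hence $u_z$ is purely imaginary and
\[
\frac{|\mathcal A(u)|}{|\mathcal E(u)|}=\frac{1+\alpha/2}{|\alpha|/2}
\]
pointwise. As $\alpha\downarrow -2/p$ this ratio tends to $\frac{1-1/p}{1/p}=p-1$. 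Note that $p\ge 2$ makes this the relevant branch, since $p^*-1=p-1$ there.

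Second, localise. Let $u_\alpha(z)=z\,h(|z|)\chi(|z|)$, where $\chi$ is a smooth cutoff equal to $1$ on $[0,1]$ and vanishing for $r\ge 2$. Since $\alpha p>-2$, $|\nabla u_\alpha|\lesssim r^{\alpha}$ is in $L^p$ near $0$, so $u_\alpha\in W^{1,p}_0$. On the unit disc the pointwise identity above holds, and
\[
\int_{B(0,1)}r^{\alpha p}\,\diff x=\frac{2\pi}{\alpha p+2}\to\infty \quad\text{as }\alpha\downarrow -2/p .
\]
On the annulus $1\le r\le 2$, $\nabla u_\alpha$ is bounded uniformly in $\alpha$ in a compact range, so its contribution to both $\|\mathcal A(u_\alpha)\|_{L^p}^p$ and $\|\mathcal E(u_\alpha)\|_{L^p}^p$ is $O(1)$. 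Therefore
\[
\frac{\|\mathcal A(u_\alpha)\|_{L^p}}{\|\mathcal E(u_\alpha)\|_{L^p}}\to p-1,
\]
which gives $C(p,2)\ge p-1$.

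The main point of care is the singular limit. Both norms blow up at the same rate, and the ratio must be controlled uniformly. This holds because on $B(0,1)$ the ratio is exactly the constant $(1+\alpha/2)/(|\alpha|/2)$, while the annulus error is bounded. I would also double-check the Wirtinger formulas for $|\mathcal A|$ and $|\mathcal E|$ in the Frobenius normalisation, since a factor there would change the constant.
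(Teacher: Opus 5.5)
Your proof is correct, including the point you flagged. In the Frobenius normalisation one does have $|\mathcal A(u)|^2=2(\mathrm{Im}\,u_z)^2$ and $|\mathcal E(u)|^2=2(\mathrm{Re}\,u_z)^2+2|u_{\bar z}|^2$. Also, $u_\alpha$ is an admissible competitor: it is continuous at the origin and $C^1$ elsewhere with $|\nabla u_\alpha|\lesssim r^{\alpha}\in L^p$, so its classical gradient is its weak gradient.

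Your route differs from the paper's in the choice of witnesses and in how the ratio is extracted. Both arguments use divergence-free rotational fields $u=\phi(|x|)Qx$, for which $|\mathcal A(u)|=\sqrt2\,|\phi+\tfrac r2\phi'|$ and $|\mathcal E(u)|=\tfrac{r}{\sqrt2}|\phi'|$.

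The paper takes $\phi=(\ln r)^k$ on the unit disc. There the pointwise ratio $|2t/k-1|$ (with $t=-\ln r$) is not constant. So the paper rewrites $\|\mathcal A(u_k)\|_{L^p}^p/\|\mathcal E(u_k)\|_{L^p}^p$ as $\mathbb E|X_k-1|^p$ for a Gamma-distributed $X_k$ and applies Jensen with $\mathbb E X_k\to p$.

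You take $\phi=r^\alpha$ with $\alpha\downarrow-2/p$, the Hardy-critical exponent and the Korn analogue of Lehto's extremals. The ratio is then exactly constant on the disc, and everything reduces to the divergence of $\int_{B(0,1)}r^{\alpha p}\,\mathrm{d}x$ against a uniformly bounded cutoff error. This is shorter, avoids the probabilistic computation, and makes the extremal mechanism transparent.

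The paper's family buys two things yours does not:
\begin{itemize}
\item It already lies in $W^{1,p}_0(B(0,1))$ with no cutoff, since $\ln 1=0$.
\item Its exact Gamma representation also gives the matching upper bound for that family and the rate $(p-1)+\mathcal O(1/k)$, noted in the remark after the proof.
\end{itemize}

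In your family the pointwise ratio $(2+\alpha)/|\alpha|$ on the disc actually exceeds $p-1$. It is the cutoff annulus that pulls the global ratio back to at most $p-1$, consistent with the paper's bound for generalised-radial maps. Keeping the $O(1)$ annulus terms in both norms, as you did, is therefore genuinely needed.

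Since your fields are also divergence-free ($\mathrm{Re}\,u_z=0$), they serve equally well for the trace-free lower bound that the paper derives from the same example.
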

\begin{proof}
Define the vector field $u_k\colon \R^2 \to \R^2$ as 
\begin{equation*}
u_k(x) = Q x \ln(\abs{x})^k \mathds{1}_{\abs{x}<1},
\end{equation*}
where $x=(x_1, x_2) \in \R^2$,
$\abs{x} = \sqrt{x_1^2 + x_2^2}$,
$Q = \mat{0 & 1 \\ -1 & 0}$, $k\in \mathbb N,\ k\ge 1$,
and $\mathds{1}_{\abs{x}<1}$
is the indicator function of the open unit disk
$B_{\R^2}(0,1)$.
Let $r = \abs{x}$. Then
\begin{equation*}
\nabla u_k = \ln(r)^k Q + k \ln(r)^{k-1} r^{-2} M(x),
\end{equation*} where 
\begin{equation*}
M(x) = \mat{x_1x_2 & x_2^2 \\ -x_1^2 & -x_1x_2}.
\end{equation*}
It follows that
\begin{align*}
\mathcal E(u_k)(x)
= k \ln(r)^{k-1} r^{-2} C(x) \mathds{1}_{\abs{x}<1},
\end{align*}
where 
\begin{equation*}
C(x) = \frac{1}{2}(M(x)+M(x)^T)
= \mat{x_1x_2 & (x_2^2-x_1^2)/2
\\ (x_2^2-x_1^2)/2 & -x_1x_2}
\end{equation*}
and that
\begin{equation*}
\mathcal A(u_k)(x) =
\left(\ln(r)^k + \frac{k}{2} \ln(r)^{k-1}\right)
Q \mathds{1}_{\abs{x}<1}
\end{equation*}
Taking the Frobenius norm, since
$|C(x)| = r^2 / \sqrt{2}$ 
it follows that (writing $t=-\ln(r)$)
\begin{align*}
|\mathcal E(u_k)(x)|
&= \abs{k \ln(r)^{k-1} r^{-2}} |C(x)|
= \abs{k} \abs{\ln r}^{k-1} r^{-2} (r^2 / \sqrt{2})
= \frac{\abs{k}}{\sqrt{2}} t^{k-1}\\
|\mathcal A(u_k)(x)|
&= \abs{\ln(r)^{k-1}(\ln r + k/2)} |Q|
= \sqrt{2} \abs{\ln r}^{k-1} \abs{\ln r + k/2}\\
&= \sqrt{2} t^{k-1} \abs{-t + k/2}
= \sqrt{2} t^{k-1} \abs{t - k/2}.
\end{align*}
Taking $L^p$ norms we obtain
\begin{align*}
\norm{\mathcal E(u_k)}_{L^p}^p& = \int_{B_{\R^2}(0,1)}
|\mathcal E(u_k)(x)|^p \diff x
= \int_0^{2\pi} \int_0^1
\left(\frac{\abs{k}}{\sqrt{2}} t^{k-1}\right)^p
r \diff r \diff \theta\\&
= 2\pi \left(\frac{\abs{k}}{\sqrt{2}}\right)^p
\int_0^1 r (-\ln r)^{p(k-1)} \diff r.
\end{align*}
The integral can also be written as
\begin{equation*}
\int_0^\infty e^{-2t} t^{p(k-1)} dt =
\left(\frac{1}{2}\right)^{p(k-1)+1}
\Gamma(p(k-1)+1)
=\mathrel{:}\left(\frac{1}{2}\right)^{p(k-1)+1}I_p.
\end{equation*}
Thus it follows that
\begin{align*}
\norm{\mathcal E(u_k)}_{L^p}^p
&= 2\pi k^p 2^{-p/2} 2^{-(p(k-1)+1)}
\Gamma(p(k-1)+1)\\
&=2\pi k^p 2^{-p/2} 2^{-(p(k-1)+1)} I_p.
\end{align*}
To calculate the norm of the anti-symmetric part,
first observe that 
\begin{align*}
\norm{\mathcal A(u_k)}_{L^p}^p& =
\int_{B_{\R^2}(0,1)} \abs{\mathcal A(u_k)(x)}^p dx
= \int_0^{2\pi} \int_0^1
\left(
\sqrt{2} t^{k-1} \abs{t - k/2}
\right)^p r dr d\theta\\
&= 2\pi (\sqrt{2})^p\int_0^\infty e^{-2t} t^{p(k-1)}
\abs{t - k/2}^p \diff t.
\end{align*}
Define 
\begin{equation*}
J_p\mathrel{:}=\int_0^\infty e^{-t} t^{p(k-1)}
\left|\frac tk - 1\right|^p\diff t
\end{equation*}
so that as a simple calculation via the
change of variable $s=2t$ shows we have
\begin{equation*}
\norm{\mathcal A(u_k)}_{L^p}^p
=2\pi 2^{p/2} J_p 2^{-p(k-1)-1-p}k^p
\end{equation*}
We now want to find a lower bound for the function
$f_k(p) \mathrel{:}=
\norm{\mathcal A(u_k)}_{L^p}/
\norm{\mathcal E(u_k)}_{L^p}$.
Thanks to our calculations we have
\begin{align}
f_k(p)^p = \frac{\norm{\mathcal A(u_k)}_{L^p}^p}
{\norm{\mathcal E(u_k)}_{L^p}^p}
&= \frac{2\pi (\sqrt{2})^p k^p2^{-p} J_p}
{2\pi (k/\sqrt{2})^p I_p}
\nonumber \\
&= \frac{J_p}{I_p} \label{eq:fk_ratio} 
\end{align}
We now prove that $\lim_{k\to \infty} J_p/I_p\ge (p-1)^p$,
which will conclude the proof.
First notice that the ratio admits
an interpretation as an expectation: 
define a probability measure
$d\mu(t)$ on $(0, \infty)$ by
\begin{equation}
 d\mu(t) = \frac{e^{-t} t^{p(k-1)}}{I_p} dt. 
\end{equation}
Then,
\begin{equation}
\frac{J_p}{I_p}
= \int_0^\infty \left|\frac tk - 1\right|^p
\frac{e^{-t} t^{p(k-1)}}{I_p} dt
= \mathbb{E}[\abs{X_k - 1}^p] 
\end{equation}
where $X_k$ is a random variable that is
$(p(k-1)+1,1/k)$ Gamma-distributed.
By Jensen's inequality, $\mathbb{E}[\phi(X)]
\ge \phi(\mathbb{E}[X])$
for convex functions $\phi$, hence
\begin{equation}
\mathbb{E}[|X_k-1|^p] \ge \abs{\mathbb{E}[X_k] - 1}^p 
\end{equation}
Some straightforward calculations
with the $\Gamma$ function prove that
\begin{align*}
\mathbb{E}[X_k]= \frac{(p(k-1)+1)}k,
\end{align*}
so
\begin{align*}
\frac{J_p}{I_p}& \ge \abs{\mathbb{E}[X_k] - 1}^p \\
&= \left|p\frac{k-1}k+\frac1k-1\right|^p
\end{align*}
Substituting this lower bound for $J_p/I_p$ back
into~\eqref{eq:fk_ratio} gives us
\begin{equation*}
 f_k(p) \ge (p-1)\frac{k-1}{k}.
\end{equation*}
Since $C(p,2)\ge f_k(p)$ for any $k$ it follows that
\begin{equation*}
    C(p,2)\ge \liminf_{k\to \infty}f_k(p)=p-1.
\end{equation*}
\end{proof}
\begin{remark}
As we shall see in Section~\ref{sec:calcvar},
for $p\ge 2$, $f_k(p)\le p-1$ as well.
For this particular sequence of radial functions,
this can be proved directly: using the fact that 
(as one easily sees)
\begin{equation*}
    |x-1|^p\le
    (p-1)^p+\frac{(p-1)^{p-1}}{p^{p-2}}(x^p-px^{p-1}),
\end{equation*}
applying this to $x=X_k$ and
taking expected values yields
\begin{equation*}
    \|X_k-1\|_{L^p}^p\le (p-1)^p+
    \frac{(p-1)^{p-1}}{p^{p-2}}
    \mathbb E\left(X_k^p-pX_k^{p-1}\right)=(p-1)^p,
\end{equation*}
where the second step follows by well known 
properties of moments of the Gamma distribution. 
An argument using Chernoff bounds
(see \cite[Section 2.2]{Concentration})
quickly proves $f_k(p)
=(p-1)+\mathcal O\left(1/k \right)$.
\end{remark}
\subsection{Particular cases of Korn's inequality:
Full gradient, divergence-free and low dimensions}
As previously mentioned, there exist many other forms
and variants of the Korn inequality. 
In this section, we shall focus on~\eqref{eq:Kornfull}
and~\eqref{Korntrace}.
We can apply the same method as in the previous sections
to obtain bounds for these as well:
\begin{theorem} For $d\ge 2$ and $p\in (1,\infty)$
\begin{align*}
C_{\mathrm{KN}}(p,d)&\le \begin{cases}
\sqrt{3(p-1)^2+1}&\text{if }p\ge 2\\
\left((\sqrt{3}(p^*-1))^p+1\right)
^{\frac 1p}&\text{if }p\in(1,2)
\end{cases}\\
C_{0}(p,d)&\le \sqrt 3(p^*-1).
\end{align*}
If $p\ge 2$, then
\begin{equation*}
\sqrt{(p-1)^2+1}\le C_{\mathrm{KN}}(p,d)
\end{equation*}
and
\begin{equation*}
p-1\le C_0(p,d).
\end{equation*}
\end{theorem}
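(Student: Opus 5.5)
The four claims follow from what is already established, once we split $\nabla u=\mathcal E(u)+\mathcal A(u)$ orthogonally.

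\textbf{Upper bound for $C_{\mathrm{KN}}$.} Since $\mathrm{Sym}(d)\perp\mathrm{Skew}(d)$ in the Frobenius inner product, we have pointwise $|\nabla u|^2=|\mathcal E(u)|^2+|\mathcal A(u)|^2$.

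For $p\ge 2$, apply Minkowski's inequality in $L^{p/2}$ to these squares:
\[
\|\nabla u\|_{L^p}^2=\bigl\||\mathcal E|^2+|\mathcal A|^2\bigr\|_{L^{p/2}}\le \|\mathcal E(u)\|_{L^p}^2+\|\mathcal A(u)\|_{L^p}^2 .
\]
Inserting the bound $\|\mathcal A(u)\|_{L^p}\le\sqrt3(p-1)\|\mathcal E(u)\|_{L^p}$ from Theorem~\ref{thm:Korn} gives $\sqrt{3(p-1)^2+1}$.

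For $p<2$, the map $t\mapsto t^{p/2}$ is subadditive, so $|\nabla u|^p\le|\mathcal E|^p+|\mathcal A|^p$ pointwise. Integrating and using Theorem~\ref{thm:Korn} gives the stated $\bigl((\sqrt3(p^*-1))^p+1\bigr)^{1/p}$.

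A sharper alternative route for $p\ge 2$ would be to run the martingale argument of Proposition~\ref{prop:upperbound} directly, with the process representing $\nabla u$ itself. One would then need the tensor inequality $|a|^2+|(I-\sigma)a|^2\le 4|a|^2$ from Lemma~\ref{lemma:linalg1}. However, that only yields $2(p^*-1)$, which is worse, so I would use the simple orthogonality argument above.

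\textbf{Upper bound for $C_0$.} For $d\ge 3$ this is exactly the trace-free statement proved after Lemma~\ref{lemma:linalg2}. For $d=2$, I would use the footnoted complex-structure argument: $\mathcal E_0\mapsto\mathcal A_0$ is the conformal/anticonformal splitting, so it is essentially $T_{BA}$ up to an isometry. The bound $\sqrt3(p^*-1)$ then follows from the known $1.575(p^*-1)$ bound of \cite{Banuelos2008}.

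\textbf{Lower bounds, $p\ge2$.} I would reduce to $d=2$ via the tensoring construction of Proposition~\ref{prop:monotonedim}. That argument works verbatim for $C_{\mathrm{KN}}$ and $C_0$, because the extra column $g'f_d$ becomes negligible after the rescaling $r\to\infty$.

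For $C_{\mathrm{KN}}$, I would reuse the fields $u_k$ from Proposition~\ref{prop:lowerbound2}. Pointwise $|\nabla u_k|^2=|\mathcal E|^2+|\mathcal A|^2=t^{2k-2}\bigl(k^2/2+2(t-k/2)^2\bigr)$. The ratio $\|\nabla u_k\|_p^p/\|\mathcal E(u_k)\|_p^p$ therefore equals $\mathbb E\bigl[(1+4(X_k-1)^2)^{p/2}\bigr]$, with $X_k$ Gamma-distributed as before. Jensen's inequality for the convex function $s\mapsto(1+4s^2)^{p/2}$ gives at least $(1+(p-1)^2\cdot(\text{factor}\to1))^{p/2}$. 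Up to the normalisation constants, which I expect to check out exactly as in Proposition~\ref{prop:lowerbound2}, this gives $\sqrt{(p-1)^2+1}$ in the limit.

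For $C_0$, observe that $u_k$ is divergence-free, since $\mathrm{tr}\,\nabla u_k=0$. Hence $\mathcal E_0=\mathcal E$ and $\mathcal A_0=\mathcal A$, and Proposition~\ref{prop:lowerbound2} gives $p-1$ directly.

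\textbf{Main obstacle.} The hardest part is the $d=2$ case of $C_0$, together with verifying that the tensoring step respects traces. The extension adds a $(d+1,d+1)$ entry of $0$, so $\mathrm{Id}/d$ changes to $\mathrm{Id}/(d+1)$. This is harmless for divergence-free $f_d$ but needs care in general. Since the lower bound only uses divergence-free witnesses, I would restrict to those.
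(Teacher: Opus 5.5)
Your proposal is correct and follows the paper's proof essentially step for step. For $C_{\mathrm{KN}}$ you use the orthogonal splitting with Minkowski in $L^{p/2}$ (resp.\ subadditivity of $t^{p/2}$) together with Theorem~\ref{thm:Korn}. For $C_0$ you use Lemma~\ref{lemma:linalg2} when $d\ge 3$ and the identification of $\mathcal E_0\mapsto\mathcal A_0$ with $T_{BA}$ when $d=2$; quoting the $1.575(p^*-1)$ bound instead of \cite{Volberg} suffices there. For the lower bounds you use the divergence-free fields $u_k$ together with Proposition~\ref{prop:monotonedim}.

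One arithmetic slip needs fixing. With $X_k=2t/k$ the pointwise identity is $|\nabla u_k|^2=|\mathcal E(u_k)|^2\bigl(1+(X_k-1)^2\bigr)$, not $1+4(X_k-1)^2$. Taken literally, your version would give $\sqrt{1+4(p-1)^2}$, which exceeds your own upper bound $\sqrt{1+3(p-1)^2}$. With the correct identity, Jensen gives exactly $\bigl(1+(p-1)^2(k-1)^2/k^2\bigr)^{p/2}$.
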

\begin{proof}
We first focus on the full Korn inequality.
Concerning lower bounds,
the same argument as for $C(p,d)$ works
mutatis mutandis: the only necessary change in
Proposition~\ref{prop:monotonedim} is the estimate
\begin{equation*}
\|\nabla f_{d+1}\|_{L^p}\ge \|\nabla f_d\|_{L^p},
\end{equation*}
which follows from
\begin{equation*}
\nabla f_{d+1}=
\begin{pmatrix}g(x_{d+1})\nabla f_d(x_1,\dots, x_d)
& g'(x_{d+1})f_d(x_1,\dots, x_d)\\
0&0\end{pmatrix}
\end{equation*}
together with
\begin{align*}
|\nabla f_{d+1}(x)|
&=
\sqrt{|\nabla f_d(x_1,\dots, x_{d})|^2|g(x_{d+1})|^2
+|g'(x_{d+1})|^2|f_d
(x_1,\dots, x_d)|^2}\\&
\ge|\nabla f_d(x_1,\dots, x_{d})||g(x_{d+1})|
\end{align*}
which implies
$\|\nabla f_{d+1}\|_{L^p}
\ge \|\nabla f_d\|_{L^p}\|g\|_{L^p}
=\|\nabla f_d\|_{L^p}$.
Concerning Proposition~\ref{prop:lowerbound2},
the same method works with a slightly
different computation. Concerning the upper bounds, 
for $p\ge 2$ it follows from
\begin{equation*}
(\mathrm{Id}+2P_{\mathrm{Skew}}R\otimes R)
(\mathcal E(u))
=\mathcal E(u)+\mathcal A(u)=\nabla u
\end{equation*}
together with Minkowski's integral inequality and
the pointwise orthogonality of
$P_{\mathrm{Skew}}R\otimes R(\mathcal E(u))$
and $\mathcal E(u)$
\begin{align*}
\|\nabla u\|_{L^p}
&=
\left(
\int_{\R^d} \sqrt{|\mathcal E(u)(x)|^2
+|2P_{\mathrm{Skew}}R\otimes R
(\mathcal E(u))(x)|^2}^p\mathrm{d}x
\right)^{\frac 1p}
\\&\le
\Big\|\big(\|\mathcal E(u)\|_{L^p},\|2P_{\mathrm{Skew}}
R\otimes R\|_{L^p\to L^p}
\|\mathcal E(u)\|_{L^p}\big)\Big\|_{\ell_2}.
\end{align*}
For $p\le 2$, a similar proof works,
using the convexity of $t^{2/p}$ instead.
For $C_0(p,d)$, Lemma~\ref{lemma:linalg2}
proves the result for $d\ge 3$. For $d=2$,
the result follows (with a stronger constant)
from~\cite{Volberg}. Indeed, $\mathrm{Sym}_0(2)
=\mathrm{CO}^-(2)$ and $\mathrm{Skew}_0(2)=
\mathrm{CO}^+(2)$, so the map
$\mathcal E_0(u)\to \mathcal A_0(u)$
corresponds
to (after identifying $\C$ with $\R^2$), 
$T_{BA}$ and the result follows by
the estimates of \cite{Volberg} and duality.
The lower bound follows by 
noticing that in Proposition~\ref{prop:lowerbound2}
the sequence $u_k$ is composed
of divergence-free vector fields. 
\end{proof}
\section{Rank-one convexity,
improved lower bounds and sharpness}
\label{sec:calcvar}
We briefly remind the reader of $C^{\mathrm{rc}}(p,d)$
and its connection to $C(p,d)$.
Let $f_{p,c(p,d)}$ be defined as follows:
\begin{equation*}
f_{p,c(p,d)}(A)\mathrel{:}=
c(p,d)^p|P_{\mathrm{Sym}}(A)|^p
-|P_{\mathrm{Skew}}(A)|^p.
\end{equation*}
It is easy
to see that if $c(p,d)\ge C(p,d)$,
then $f_{p,c(p,d)}^{\mathrm{qc}}(0)=0$,
where $f^{\mathrm{qc}}$
denotes the quasiconvex
envelope of $f$, since
\begin{align*}
f^{\mathrm{qc}}_{p,c(p,d)}(0)
&=\inf_{\phi \in C^{\infty}_0(B_{\R^d}(0,1),\R^d)} 
\fint_{B_{\R^d}(0,1)} f_{p,c(p,d)}(\nabla \phi)
\\&=\inf_{\phi\in C^\infty_0(B_{\R^d}(0,1),\R^d)}
c(p,d)^p\|\mathcal E(\phi)\|_{L^p}^p-
\|\mathcal A (\phi)\|_{L^p}^p.
\end{align*} 
In particular, we can recast our conjecture as
\begin{equation*}
f_{p,p^*-1}^{\mathrm{qc}}(0)=0.
\end{equation*}
A necessary condition for this is, of course, that the
same must be true for the
rank-one convex envelope of the integrand.
Let us denote by $C^{\mathrm{qc}}(p,d)$
the minimal constant
necessary so that $f^{\mathrm{qc}}_{p,c(p,d)}(0)=0$
and similarly $C^{\mathrm{rc}}(p,d)$.
We recall the following result of
Ball and Sivaloganathan
\begin{proposition}[{\cite[Proposition 3.4]{Ball}
    and~\cite[Proposition 2.4]{Sivaloganathan}}]
\label{prop:radial}
Let $u\colon \R^d\to \R^d$ be
a generalised-radial function,
i.e. a function of the form
\begin{equation*}
u(x)=r(\|x\|)x
\end{equation*}
where $r\in C^1([0,\infty), M_d(\R))$
is compactly supported in $[0,1]$. Then, for any given
rank-one convex integrand $f\colon M_d(\R)\to \R$
we have
\begin{equation*}
\fint_{B_{\R^d}(0,1)} f(Du)\diff x\ge f(0).
\end{equation*}
\end{proposition}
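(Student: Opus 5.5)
Fix a rank-one convex integrand $f$ and a generalised-radial map $u(x)=r(|x|)x$ with $r\in C^1([0,\infty),M_d(\R))$ supported in $[0,1]$. The plan is to reduce the averaged integral to a superposition of one-dimensional rank-one averages along rays and then apply Jensen's inequality along rank-one lines. First compute the gradient: writing $\rho=|x|$ and $\omega=x/\rho$,
\begin{equation*}
Du(x)=r(\rho)+\rho\, r'(\rho)\,\omega\otimes\omega^t ,
\end{equation*}
so that for fixed $\rho$ the matrices $Du$ at the points $\rho\omega$ form the family $r(\rho)+\rho r'(\rho)\omega\otimes\omega^t$, $\omega\in\mathbb S^{d-1}$. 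Passing to polar coordinates gives
\begin{equation*}
\fint_{B(0,1)} f(Du)\diff x=\frac{d}{\omega_{d-1}}\int_{\mathbb S^{d-1}}\int_0^1 f\bigl(r(\rho)+\rho r'(\rho)\omega\otimes\omega^t\bigr)\rho^{d-1}\diff\rho\diff\omega .
\end{equation*}

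The key step is to exhibit, for each fixed $\omega$, the inner integral as an average along a family of rank-one segments whose barycentre is $0$. Along the ray $t\mapsto t\omega$, the function $v_\omega(t):=u(t\omega)=t\,r(t)\omega$ has derivative $\frac{d}{dt}(t r(t))\omega=Du(t\omega)\omega$. Hence the matrices $Du(t\omega)$ act on $\omega$ as derivatives of a one-variable map vanishing at $t=1$. I intend to use the weighted identity
\begin{equation*}
\int_0^1 \rho^{d-1}\bigl(r(\rho)+\rho r'(\rho)\bigr)\diff\rho=\int_0^1\rho^{d-1}\tfrac{d}{d\rho}(\rho r)\diff\rho=-(d-1)\int_0^1\rho^{d-1}r\diff\rho ,
\end{equation*}
and to decompose $Du=r(\rho)(I-\omega\omega^t)+\frac{d}{d\rho}(\rho r)\,\omega\omega^t$. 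This is the Ball--Sivaloganathan trick: averaging over $\omega$ for a fixed $\rho$ and then integrating in $\rho$, the total measure $\nu=(Du)_\#\frac{\diff x}{|B|}$ should be a laminate with barycentre $0$. Concretely, I would first show that the distribution $\mu_\rho$ of $Du$ on the sphere of radius $\rho$, namely $r(\rho)+\rho r'(\rho)\omega\omega^t$, together with a small correction term, splits along the rank-one directions $\omega\omega^t$.

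The cleaner route I would actually follow is induction on a discretisation. Approximate $r$ by step functions $r_n$ (with $u$ kept Lipschitz via the one-variable identity above), so that $\nu$ becomes a finite mixture. Then verify the laminate property by building a dyadic martingale with rank-one increments whose terminal law is $\nu$. This matches the paper's correspondence between pre-laminates and martingales with rank-one increments, and it gives $\langle f,\nu\rangle\ge f^{\mathrm{rc}}(0)=f(0)$ by \eqref{eq:rcenv}. The increments used when moving from shell $\rho$ to shell $\rho-\diff\rho$ are multiples of $\omega\otimes\omega^t$, which are rank one, so each splitting step uses convexity of $f$ along one rank-one line.

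The main obstacle is the middle step: checking that the ``radial'' averaging produces a genuine laminate, i.e.\ that after adding $\rho r'\omega\omega^t$ the conditional barycentres are consistent across $\omega$. A uniform average over $\omega$ of the rank-one matrices $\omega\omega^t$ gives $I/d$, which is not rank one, so the splitting must be organised per direction $\omega$ rather than per shell. I would handle this with a one-dimensional Jensen argument along each fixed ray. With $g_\omega(s):=f(A+s\,\omega\omega^t)$ convex in $s$, the weighted integration by parts above shows that the $\rho^{d-1}$-weighted average of $Du(\rho\omega)$, projected onto the components that vary, lies on the line $A+\R\omega\omega^t$ and averages to the correct point. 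Applying Jensen ray by ray and then in $\omega$ should reduce the claim to $f$ evaluated at averages whose outer average is $0$. If this per-ray argument fails because the $r(\rho)(I-\omega\omega^t)$ part also varies, I would fall back on citing the original proofs of \cite{Ball,Sivaloganathan}, whose conclusion is precisely the statement.
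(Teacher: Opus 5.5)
Your proposal does not contain a working argument: each mechanism you sketch breaks down, and you end by falling back on the citation. That citation is all the paper itself offers, since it gives no proof of its own. The specific failures are these.
\begin{itemize}
\item On a fixed sphere, the values $Du(\rho\omega)=r(\rho)+\rho\,r'(\rho)\,\omega\otimes\omega$ do lie on rank-one lines through $r(\rho)$. But their average over $\omega$ is $r(\rho)+\frac{\rho}{d}r'(\rho)\neq r(\rho)$, so they are not a rank-one splitting of $r(\rho)$.
\item Along a fixed ray, the values do not lie on a single rank-one line, because $r(\rho)$ itself varies with $\rho$. This is exactly the failure you anticipate. Moreover, the rank-one direction is $(r'(\rho)\omega)\otimes\omega$, not $\omega\otimes\omega$, so $g_\omega$ is the wrong function to use.
\item The assertion that passing from one shell to the next changes $Du$ by rank-one matrices is false. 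Indeed $\partial_\rho\bigl[Du(\rho\omega)\bigr]=r'(\rho)+(\rho r')'(\rho)\,\omega\otimes\omega$, which is generically of full rank.
\item Replacing $r$ by step functions makes $u$ jump across spheres, so $Du$ acquires singular parts. You are then left with no Lipschitz map, no finite mixture and no dyadic rank-one martingale to work with.
\item The claim ``$\nu=(Du)_\#\frac{\diff x}{|B|}$ is a laminate with barycentre $0$'' is, in Pedregal's Jensen sense, precisely the statement to be proved. Realising $\nu$ as a limit of finite-order laminates is a separate and harder task that the proposal does not carry out.
\end{itemize}

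The missing idea is that no exact splitting is needed. Use rank-one convexity only through its supporting-line inequality at $r(\rho)$. Then let an integration by parts in $\rho$ absorb the drift $\frac{\rho}{d}r'(\rho)$; this integration by parts must be applied to the scalar function $f\circ r$, not to $r$ as in your weighted identity. For $f\in C^1$, convexity of $s\mapsto f\bigl(r(\rho)+s\,r'(\rho)\,\omega\otimes\omega\bigr)$ gives
\[
f\bigl(Du(\rho\omega)\bigr)\ge f\bigl(r(\rho)\bigr)+\rho\,\bigl\langle Df\bigl(r(\rho)\bigr),r'(\rho)\,\omega\otimes\omega\bigr\rangle .
\]
Averaging over $\omega$, using $\fint_{\mathbb S^{d-1}}\omega\otimes\omega\,\diff\omega=\frac1d\mathrm{Id}$, yields $\fint_{\mathbb S^{d-1}}f(Du(\rho\omega))\diff\omega\ge f(r(\rho))+\frac{\rho}{d}(f\circ r)'(\rho)$. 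Multiplying by $d\rho^{d-1}$ and integrating in $\rho$ gives
\[
\fint_{B_{\R^d}(0,1)}f(Du)\diff x\ge\int_0^1\frac{\diff}{\diff\rho}\bigl(\rho^d f(r(\rho))\bigr)\diff\rho=f(r(1))=f(0).
\]
For a general rank-one convex $f$, apply this to the mollifications $f*\eta_\varepsilon$. These are smooth, rank-one convex and converge locally uniformly to $f$. Since $Du$ is bounded, the inequality passes to the limit. This is the Ball--Sivaloganathan argument.
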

Thanks to our radial example
in section~\ref{sec:lowerbound},
it follows that $f_{p,c(p,d)}^{\mathrm{rc}}(0)=0$
is possible
only if $c(p,d)\ge (p-1)$,
which is only interesting when $p\ge 2$.
We now prove that
$C^{\mathrm{rc}}(p,d)\ge (p^*-1)$
for all $p\in (1,\infty)$.
This leads to the sharpened form of the lower bounds
for $C(p,d)$ and $C_{\mathrm{KN}}(p,d)$
stated in Theorem~\ref{thm:Korn}.
\begin{theorem}\label{thm:envelope}
For $p\in(1,\infty)$ and $d\in \N\setminus\{0,1\}$,
\begin{equation}
C^{\mathrm{rc}}(p,d)=p^*-1.
\end{equation}
Similarly, in the trace-free case
we have
\begin{equation*}
C^{\mathrm{rc}}_0(p,d)=p^*-1.
\end{equation*}
\end{theorem}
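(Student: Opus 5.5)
The plan is to prove the two inequalities $C^{\mathrm{rc}}(p,d)\le p^*-1$ and $C^{\mathrm{rc}}(p,d)\ge p^*-1$ separately. In both directions we use the duality formula~\eqref{eq:rcenv}. By that formula, $f^{\mathrm{rc}}_{p,c}(0)$ is the infimum of $\langle f_{p,c},\mu\rangle$ over laminates of finite order with barycentre $0$. Such laminates are exactly the laws of $M_d(\R)$-valued dyadic martingales $M_n$ with $M_0=0$ whose increments $\mathrm dM_n$ are rank-one. So the whole question reduces to Burkholder's discrete Theorem~\ref{thm:burk} applied to the pair $(P_{\mathrm{Skew}}M_n,P_{\mathrm{Sym}}M_n)$, and in the trace-free case to $(P_{\mathrm{Skew}}M_n+P_{\mathrm{span}(\mathrm{Id})}M_n,\,P_{\mathrm{Sym}_0}M_n)$.

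\textbf{Upper bound.} The key pointwise fact concerns a rank-one matrix $a\otimes b$. We have
\[
|P_{\mathrm{Sym}}(a\otimes b)|^2-|P_{\mathrm{Skew}}(a\otimes b)|^2=\langle a\otimes b,b\otimes a\rangle=\langle a,b\rangle^2\ge 0 .
\]
Hence along any rank-one martingale, $X_n=P_{\mathrm{Skew}}M_n$ satisfies $|\mathrm dX_n|\le|\mathrm dY_n|$ with $Y_n=P_{\mathrm{Sym}}M_n$, and $X_0=Y_0=0$. Theorem~\ref{thm:burk} then gives $\mathbb E|X_n|^p\le (p^*-1)^p\,\mathbb E|Y_n|^p$, that is, $\langle f_{p,p^*-1},\mu\rangle\ge 0$ for every laminate $\mu\in\mathcal M_0$. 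Therefore $f^{\mathrm{rc}}_{p,p^*-1}(0)=0$ and $C^{\mathrm{rc}}\le p^*-1$.

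For the trace-free case the same computation applies, with $\mathcal E_0$ and $\mathcal A_0$ in place of $\mathcal E$ and $\mathcal A$:
\[
|P_{\mathrm{Sym}_0}(a\otimes b)|^2=|P_{\mathrm{Sym}}(a\otimes b)|^2-\frac{\langle a,b\rangle^2}{d},
\qquad
|P_{\mathrm{Skew}}(a\otimes b)|^2+\frac{\langle a,b\rangle^2}{d}.
\]
Their difference is $\langle a,b\rangle^2(1-2/d)\ge 0$ for every $d\ge 2$, so subordination still holds.

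\textbf{Lower bound.} Let $S=\tfrac12(e_{1,2}+e_{2,1})$ and $K=\tfrac12(e_{1,2}-e_{2,1})$. Then $S\pm K$ equals $e_{1,2}$ or $e_{2,1}$, which are both rank-one, and $|S|=|K|$. Moreover $S\in\mathrm{Sym}_0(d)$ and $K\in\mathrm{Skew}(d)$ is orthogonal to $\mathrm{Id}$, so the construction serves both the full and the trace-free cases.

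Fix $c<p^*-1$. By the sharpness statement in Theorem~\ref{thm:burk}, there is a finite dyadic pair $(X_n,Y_n)$ with $Y_0=0$, $Y$ real-valued, $\mathrm dX_n=\varepsilon_n\,\mathrm dY_n$ for a predictable sign sequence $\varepsilon_n$, and $\mathbb E|X_N|^p>c^p\,\mathbb E|Y_N|^p$. Set $M_n=Y_nS+X_nK$. Each increment is $\mathrm dY_n(S+\varepsilon_nK)$, a scalar multiple of $e_{1,2}$ or $e_{2,1}$, so it is rank-one. Thus $\mu=\mathrm{Law}(M_N)$ is a finite-order laminate with barycentre $0$, and
\[
\langle f_{p,c},\mu\rangle=|S|^p\bigl(c^p\,\mathbb E|Y_N|^p-\mathbb E|X_N|^p\bigr)<0 .
\]
So $f^{\mathrm{rc}}_{p,c}(0)<0$. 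By the elementary lemma on positively $p$-homogeneous integrands, this forces $f^{\mathrm{rc}}_{p,c}(0)=-\infty$. Hence $C^{\mathrm{rc}}\ge p^*-1$, and the same holds for $C^{\mathrm{rc}}_0$.

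The main point requiring care is the identification of finite dyadic martingales with rank-one increments with finite-order laminates in~\eqref{eq:rcenv}. Concretely, one must check that each dyadic split of an atom along a rank-one segment is a legitimate elementary splitting. This holds because the atoms are split into two halves of equal mass whose centres differ by a rank-one matrix, which I would check against the definition of laminates in~\cite[Section 9.1]{Rindler}.
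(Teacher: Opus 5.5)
Your proof is correct and is essentially the paper's argument made self-contained. Your identity $|P_{\mathrm{Sym}}(a\otimes b)|^2-|P_{\mathrm{Skew}}(a\otimes b)|^2=\langle a,b\rangle^2$, together with its trace-free analogue, is exactly the paper's lemma $C(\mathrm{Sym}(d))=C(\mathrm{Sym}_0(d))=1$, which drives the upper bound through \cite[Proposition 3.7]{Cassese2025}, and your lower-bound martingale $Y_nS+X_nK$ is, up to relabelling, the paper's $f_n(e_1\otimes e_d+e_d\otimes e_1)+g_n(e_1\otimes e_d-e_d\otimes e_1)$. The one imprecision is harmless: finite-order laminates may split with weights $\lambda\neq\frac12$, so in the upper bound represent them by martingales on general (not dyadic) filtrations, which Theorem~\ref{thm:burk} already covers.
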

This theorem is a direct
corollary of~\cite[Theorem 2.2]{Cassese2025}
together with 
the following elementary lemma:
\begin{lemma}
Let $X=\mathrm{Sym}_0(d)$ or 
$X=\mathrm{Sym}(d)$. Then
\begin{equation*}
C(X)\mathrel{:}=
\sup_{R:\mathrm{rank}(R)=1}
\frac{|P_{X^\perp}(R)|}{|P_{X}(R)|}=1.
\end{equation*}
\end{lemma}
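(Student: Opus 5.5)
The lemma reduces to an explicit computation on rank-one matrices. Every rank-one $R$ can be written as $R=a\otimes b^t$ with $a,b\in\R^d\setminus\{0\}$, and both ratios are invariant under scaling $R$. So I would normalise $|a|=|b|=1$, giving $|R|^2=|a|^2|b|^2=1$, and set $c\mathrel{:}=\langle a,b\rangle=\mathrm{Tr}(R)\in[-1,1]$. Both projections then depend only on the single parameter $s\mathrel{:}=c^2\in[0,1]$, so the supremum becomes a one-variable maximisation.

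\textbf{Case $X=\mathrm{Sym}(d)$.} Here $X^\perp=\mathrm{Skew}(d)$. Using $\langle R,R^t\rangle=\mathrm{Tr}(ab^tba^t)=c^2$, I compute
\begin{equation*}
|P_{\mathrm{Sym}}(R)|^2=\tfrac14|R+R^t|^2=\tfrac12(1+s),\qquad
|P_{\mathrm{Skew}}(R)|^2=\tfrac14|R-R^t|^2=\tfrac12(1-s).
\end{equation*}
Hence the squared ratio equals $(1-s)/(1+s)\le 1$. Equality holds exactly when $s=0$, i.e. for $a\perp b$, which is possible since $d\ge2$; for instance $R=e_1\otimes e_2^t$. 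So the supremum is attained and equals $1$.

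\textbf{Case $X=\mathrm{Sym}_0(d)$.} The orthogonal decomposition is $M_d(\R)=\mathrm{Sym}_0(d)\oplus\mathrm{span}(\mathrm{Id})\oplus\mathrm{Skew}(d)$, so $X^\perp=\mathrm{Skew}(d)\oplus\mathrm{span}(\mathrm{Id})$. This gives $P_{X^\perp}(R)=P_{\mathrm{Skew}}(R)+\tfrac{c}{d}\mathrm{Id}$, with squared norm $\tfrac12(1-s)+\tfrac{s}{d}$. By Pythagoras, $|P_X(R)|^2=1-|P_{X^\perp}(R)|^2=\tfrac12(1+s)-\tfrac sd$. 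The difference of the two squared norms is
\begin{equation*}
|P_{X^\perp}(R)|^2-|P_X(R)|^2=-s\left(1-\tfrac 2d\right)\le 0 \qquad (d\ge 2).
\end{equation*}
The denominator is strictly positive, since $|P_X(R)|^2=\tfrac12+s\left(\tfrac12-\tfrac1d\right)\ge\tfrac12$. Therefore the ratio is at most $1$, with equality at $s=0$ (again $a\perp b$); when $d=2$ the ratio is identically $1$.

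The only step needing care is identifying $X^\perp$ correctly in the trace-free case and checking that the denominator never vanishes. Once the reduction to the scalar parameter $s$ is made, both are immediate, and the supremum equals $1$ in both cases.
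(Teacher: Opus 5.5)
Your computation is correct, and it is exactly the elementary verification the paper omits: the lemma is stated there without proof. Your reduction to $R=ab^t$ with unit $a,b$ and the single parameter $s=\langle a,b\rangle^2$ settles both cases, including the correct identification $X^\perp=\mathrm{Skew}(d)\oplus\mathrm{span}(\mathrm{Id})$ for $X=\mathrm{Sym}_0(d)$.

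One slip: $\langle R,R^t\rangle=\mathrm{Tr}(R^2)=\mathrm{Tr}(ab^tab^t)=\langle a,b\rangle^2$, whereas the expression $\mathrm{Tr}(ab^tba^t)$ you wrote equals $|R|^2=1$. The value $c^2$ you then use is the right one.
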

\begin{proof}[Proof of Theorem~\ref{thm:envelope}]
This result is a corollary
of~\cite[Theorem 2.2]{Cassese2025}.
Indeed,~\cite[Proposition 3.7]{Cassese2025}
implies $C^{\mathrm{rc}}(p,d)
\le C(\mathrm{Sym}(d))(p^*-1)$,
which together with the above lemma
proves the upper bound,
and~\cite[Theorem 2.2]{Cassese2025}
proves the lower bound. More explicitly, the martingale
$M_n\mathrel{:}= f_n(e_1\otimes e_d+e_d\otimes e_1)
+g_n(e_1\otimes e_d-e_d\otimes e_1)$ proves
(for a suitable choice of $(f_n,g_n)$)
that $C^{\mathrm{rc}}(p,d)\ge p^*-1$.
\end{proof}

A very similar argument,
using~\cite{Ivanisvili2015InequalityTransform}, proves
\begin{proposition}\label{prop:lowerboundfull}
\begin{equation*}
C_{\mathrm{KN}}^{\mathrm{rc}}(p,d)=
c(p)\ge \sqrt{1+(p^*-1)^2},
\end{equation*}
where $c(p)$ is defined as
\begin{equation*}
c(p)\mathrel{:}=\begin{cases}
\sqrt{1+(p^*-1)^2}& \text{if }p
\ge 2 \text{ or } p\le p_0\\
\left(
    1-\frac{2^{1-p}(1-s_0)^{p-1}}{(p-1)(1-s_0)+(2-p)}
\right)^{-\frac1p}&\text{if }p\in(p_0,2)
\end{cases}
\end{equation*}
where $p_0$ ($p_0\approx 1.638$)
and $s_0(p)$ are defined as follows: given
\begin{equation*}
u_p(t)=(p-1)(1+t^2)^{\frac{2-p}2}-p+(1+t)^{2-p}-t(2-p)
\end{equation*}
and $g(x)=(1+x)/(1-x)$, $s_0(p)$ is defined as 
the root of $u_p\circ g=0$, whereas $p_0$ is defined
as the root in $p$ of $u_p(g(-1+2/p))=0$.
The same result holds for
$C^{\mathrm{rc}}_{\mathrm{KN},0}(p,d)$.
\end{proposition}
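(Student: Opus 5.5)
We treat the full-gradient integrand
\[
F_c(A)\mathrel{:}=c^p|P_{\mathrm{Sym}}(A)|^p-|A|^p
=c^p|P_{\mathrm{Sym}}(A)|^p-\bigl(|P_{\mathrm{Sym}}(A)|^2+|P_{\mathrm{Skew}}(A)|^2\bigr)^{p/2}
\]
and reduce the rank-one convexity at $0$ of $F_c$ to a scalar martingale problem, exactly as in the proof of Theorem~\ref{thm:envelope}. The plan has three steps.

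\textbf{Step 1 (reduction to martingales).} By \eqref{eq:rcenv}, $F_c^{\mathrm{rc}}(0)=0$ holds if and only if $\mathbb E F_c(M_n)\ge 0$ for every dyadic $M_d(\R)$-valued martingale $M_n$ with $M_0=0$ and rank-one increments.

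\textbf{Step 2 (lower bound).} We use the same two-dimensional family as in Theorem~\ref{thm:envelope}:
\[
M_n=f_n(e_1\otimes e_d+e_d\otimes e_1)+g_n(e_1\otimes e_d-e_d\otimes e_1),
\]
with $|df_n|=|dg_n|$. This constraint is exactly what makes each increment rank one, since the increment equals $(df_n+dg_n)e_1\otimes e_d+(df_n-dg_n)e_d\otimes e_1$. On this family the symmetric and skew parts are orthogonal and have norms $\sqrt2|f_n|$ and $\sqrt2|g_n|$. The condition $\mathbb E F_c(M_n)\ge0$ therefore becomes
\[
c^p\,\mathbb E|f_n|^p\ \ge\ \mathbb E\bigl(f_n^2+g_n^2\bigr)^{p/2}
\]
for all $\pm1$ transforms $g$ of $f$ started at $0$. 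This is precisely the sharp inequality for $\|\sqrt{f^2+g^2}\|_p$ under $\pm1$ transforms. Its best constant is computed in~\cite{Ivanisvili2015InequalityTransform}, and it equals the stated $c(p)$, including the regime $p\in(p_0,2)$ where the extremal Bellman function changes form and yields the $s_0$, $u_p$ formula. We quote that result directly. Checking that the case split $p_0$, $s_0$ matches their parametrisation after the substitution $g(x)=(1+x)/(1-x)$ is bookkeeping. Hence $C^{\mathrm{rc}}_{\mathrm{KN}}\ge c(p)$.

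\textbf{Step 3 (upper bound).} This is where the main work lies: the lower-bound family might not be optimal among all rank-one martingales. Following the mechanism of \cite[Proposition 3.7]{Cassese2025}, for a general rank-one martingale we set $f_n=|P_{\mathrm{Sym}}(M_n)|$-type coordinates. More precisely, $X_n=P_{\mathrm{Sym}}M_n$ and $Y_n=P_{\mathrm{Skew}}M_n$ are Hilbert-space-valued martingales with $|dY_n|\le|dX_n|$; this follows from the preceding lemma, $C(\mathrm{Sym}(d))=1$. We then need the Ivanisvili et al.\ inequality in its Hilbert-space-valued, differentially subordinate form, and the difficulty is to show that its Bellman function $B(x,y)$ is still valid there. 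We would first try to show that the extremal function depends only on $(|x|,|y|)$ and satisfies the concavity condition under $|k|\le|h|$, not just $|k|=|h|$. This is the analogue of Burkholder's footnote remark that the same function works; we would check it via the standard reduction to the two-variable radial function, in the spirit of \cite{Burkholder1988Sharp}. If that goes through, $\mathbb E F_{c(p)}(M_n)\ge B(0,0)=0$, which gives equality.

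The same construction gives the trace-free case. There the test martingale already lies in $\mathrm{Sym}_0\oplus\mathrm{Skew}$, and the preceding lemma covers $X=\mathrm{Sym}_0(d)$.

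Finally, the stated inequality $c(p)\ge\sqrt{1+(p^*-1)^2}$ is immediate for $p\ge2$ and for $p\le p_0$. For $p\in(p_0,2)$ it follows because the constant in~\cite{Ivanisvili2015InequalityTransform} is a supremum over a class that contains the $\sqrt{1+(p^*-1)^2}$ witnesses. Alternatively, it follows by combining $C^{\mathrm{rc}}\ge p^*-1$ with the pointwise orthogonality of the symmetric and skew parts along the extremal sequences of Theorem~\ref{thm:envelope}.
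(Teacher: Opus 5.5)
Your Steps 1--2 match the paper's argument and are complete. The test martingale $f_n(e_1\otimes e_d+e_d\otimes e_1)+g_n(e_1\otimes e_d-e_d\otimes e_1)$ turns $\mathbb E F_c(M_n)\ge 0$ into $\|(f^2+g^2)^{1/2}\|_p\le c\|f\|_p$ for real $\pm1$ transforms. Starting at $0$ costs nothing, by the symmetry $(f,g)\mapsto(-f,-g)$ after the first step. This gives $C^{\mathrm{rc}}_{\mathrm{KN}}(p,d)\ge c(p)$, also in the trace-free case. Your final paragraph is right too, but the justification for $p\in(p_0,2)$ should be made explicit. It is the reverse Minkowski inequality in $L^{p/2}$, valid because $p<2$: $\|(f^2+g^2)^{1/2}\|_p\ge(\|f\|_p^2+\|g\|_p^2)^{1/2}$, applied to near-extremal pairs for Burkholder's constant $p^*-1$. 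For $p>2$ this inequality reverses, but there the claim holds by definition.

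The genuine gap is Step 3, which you leave conditional ("if that goes through"). As written you prove only $C^{\mathrm{rc}}_{\mathrm{KN}}(p,d)\ge c(p)$, not the equality, and hence not the radial bound with $c(p)$ that the paper deduces from it. The missing ingredient is concrete. You need a function $U(x,y)=u(|x|,|y|)$ on $H\times H$ with $U\le c(p)^p|x|^p-(|x|^2+|y|^2)^{p/2}$, $U(0,0)=0$, and $r\mapsto U(x+rh,y+rk)$ convex whenever $|k|\le|h|$. At minimum this is needed for $(h,k)=(\mathrm{sym}(a\otimes b),\mathrm{skew}(a\otimes b))$.

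\cite{Ivanisvili2015InequalityTransform} only solves the real-valued $\pm1$ problem. Passing to real multipliers in $[-1,1]$ is free, because the functional is concave in the multiplier and so is minimised at $\pm1$-valued ones. Passing to vector-valued increments is not free. Write $u=u(s,t)$ and take $h\perp x$ with $|h|=1$ and $k=\lambda y/|y|$ with $|\lambda|\le1$. The second derivative of $r\mapsto U(x+rh,y+rk)$ at $r=0$ is then $u_s(|x|,|y|)/|x|+\lambda^2u_{tt}(|x|,|y|)$. This contains the first-order term $u_s/|x|$, which convexity along the planar directions $(1,\theta)$, $|\theta|\le1$, does not control.

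The rank-one structure does not exclude this configuration either. Take $dX+dY=\pm\varepsilon\,a\otimes b$ with $a\perp b$, $Y$ a multiple of $a\otimes b-b\otimes a$, and $a^tXb=0$. Burkholder verified the analogous condition directly for $\mathcal G_p$. Here it must be verified for the explicit extremal function, in particular in the $s_0$-regime $p\in(p_0,2)$.

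The paper's one-line "very similar argument" tacitly relies on the same extension, so you have located the real crux correctly. Still, the proof is not complete until you supply that verification, or an upper-bound argument that uses more of the rank-one structure than $|dY|\le|dX|$.
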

It is interesting to note
that the above bound is indeed sharper than 
$\sqrt{1+(p^*-1)^2}$ for $p\in (p_0,2)$,
but not by much:
$\|c(p)-\sqrt{1+(p^*-1)^2}\|_\infty\le 0.0005$. 
\begin{figure}[ht!]
\centering 
\includegraphics[width=0.7\textwidth]{
    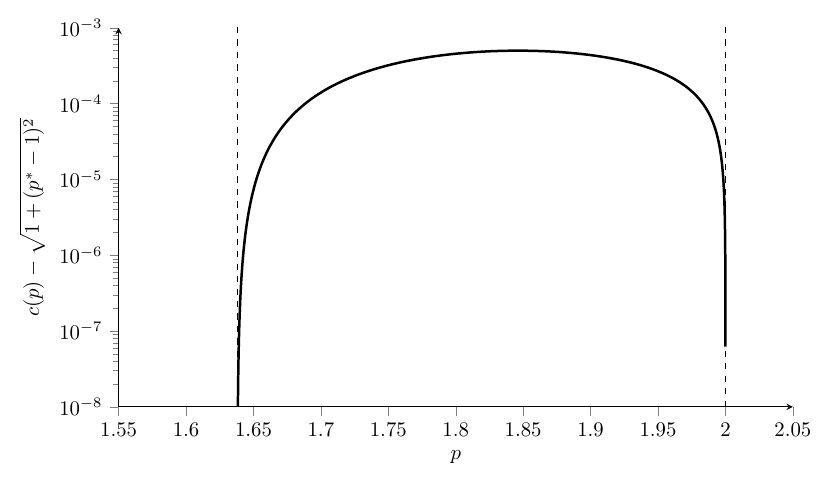
}
\caption{The improvement over the natural bound}
\label{fig:error_plot_pgf}
\end{figure}
The martingale argument we
used in Theorem~\ref{thm:envelope}
can actually be strengthened to give us a partial
description of the rank-one convex
envelope of $f_{p,p^*-1}$:
\begin{proposition}\label{prop:lowerconvex}
Let $\mathcal G_p(x,y)$ be
defined as in \eqref{eq:burkfunct}
\begin{equation*}
\mathcal G_p(x,y)\mathrel{:}=   \begin{cases}
(p^*-1)^p|y|^p-|x|^p&\text{if } |x|\le (p^*-1)|y|\\
p\left(1-\frac1{p^*}\right)^{p-1}
((p^*-1)|y|-|x|)(|x|+|y|)^{p-1}
&\text{otherwise}.
\end{cases}
\end{equation*}
Then
\begin{equation*}
f_{p,p^*-1}^{\mathrm{rc}}(A)
\ge \mathcal G_p(|P_{\mathrm{Skew}(d)}(A)|,
|P_{\mathrm{Sym}(d)}(A)|).
\end{equation*}
Moreover, equality holds whenever
$f_{p,p^*-1}(A)\ge 0$. In dimension $2$,
equality also holds
provided $\mathrm{Tr}(A)= 0$.  
\end{proposition}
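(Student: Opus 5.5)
We prove the lower bound by exhibiting a rank-one convex minorant of $f:=f_{p,p^*-1}$, and the equality cases by exhibiting laminates that realise $\mathcal G_p$.

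\emph{Lower bound.} Set $g(A):=\mathcal G_p(|P_{\mathrm{Skew}}(A)|,|P_{\mathrm{Sym}}(A)|)$. By the first property of Burkholder's function, $\mathcal G_p\le V_p$, so $g\le f$. It remains to show that $g$ is rank-one convex. Fix $A$ and a rank-one $R$, and put $x=P_{\mathrm{Skew}}(A)$, $y=P_{\mathrm{Sym}}(A)$, $h=P_{\mathrm{Skew}}(R)$, $k=P_{\mathrm{Sym}}(R)$. The lemma preceding the proof of Theorem~\ref{thm:envelope} gives $|h|\le|k|$ for every rank-one $R$. The second property of $\mathcal G_p$ (with $H=M_d(\R)$, a Hilbert space) then says that $t\mapsto \mathcal G_p(x+th,y+tk)$ is convex. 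Since $\mathcal G_p$ depends on its arguments only through $|x|$ and $|y|$, this function is exactly $t\mapsto g(A+tR)$. Hence $g$ is rank-one convex, and by the definition of the envelope $f^{\mathrm{rc}}\ge g$.

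\emph{Equality where $f(A)\ge0$.} In the region $|x|\le(p^*-1)|y|$ we have $\mathcal G_p=V_p$, so $g(A)=f(A)\ge f^{\mathrm{rc}}(A)$. Combined with the lower bound this gives $f^{\mathrm{rc}}(A)=g(A)$, and no construction is needed.

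\emph{Equality in $d=2$ with $\mathrm{Tr}A=0$ and $f(A)<0$.} Here we need upper bounds on the envelope, which we obtain from the laminate formula \eqref{eq:rcenv}. We reduce to the extremal dyadic $\pm1$-transforms of \eqref{eq:maxdyad}. Write $x=a\,S$ and $y=b\,E$ with $S\in\mathrm{Skew}(2)$, $E\in\mathrm{Sym}_0(2)$ unit matrices and $a,b\ge0$. For each unit $E'\in\mathrm{Sym}_0(2)$ there is a unit $S\in\mathrm{Skew}(2)$ with $E'\pm S$ of rank one: in $d=2$ the matrices $\pm(E'\pm S)$ are conformal/anticonformal with $|\det|=0$. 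This mirrors the martingale $f_n(e_1\otimes e_2+e_2\otimes e_1)+g_n(e_1\otimes e_2-e_2\otimes e_1)$ used in the proof of Theorem~\ref{thm:envelope}. Take scalar dyadic pairs $(f_n,g_n)$ with $|df_n|=|dg_n|$ and initial values $(b,a)$ that nearly attain the infimum in \eqref{eq:maxdyad}, and set $M_n=f_nE+g_nS$. Every increment is a multiple of $E\pm S$, hence rank-one, so the laws of $M_n$ are laminates with barycentre $A$. Evaluating \eqref{eq:rcenv} on them gives
\[
f^{\mathrm{rc}}(A)\le \inf\bigl((p^*-1)^p\mathbb E|f_n|^p-\mathbb E|g_n|^p\bigr)=\mathcal G_p(a,b).
\]

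The main obstacle is this construction. Two points need checking. First, a fixed pair $E,S$ must serve all the scalar martingales, so that the increments stay rank-one in directions compatible with $x$ and $y$. Second, the trace-free hypothesis must be used exactly where it is needed. If $\mathrm{Tr}A\neq0$, then $y$ has a component along $\mathrm{Id}$, and $\mathrm{Id}$ is not paired with any skew direction by a rank-one matrix, so the construction breaks down. In $d=2$, $\mathrm{Skew}(2)$ is one-dimensional, so $S$ is forced up to sign and the rank-one condition reduces to a one-line check. In the remaining case $b=0$ we take $E$ arbitrary. The infimum in \eqref{eq:maxdyad} is attained only in the limit, which is harmless because we only need approximate minimisers.
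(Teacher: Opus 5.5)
For $p\ge2$ your argument is correct, and for the lower bound it is the dual of the paper's.

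The paper writes an arbitrary laminate with barycentre $A$ as the law of a dyadic martingale $M_n$ with rank-one increments. It applies \eqref{eq:genburk} to $X_n=P_{\mathrm{Skew}}M_n$ and $Y_n=P_{\mathrm{Sym}}M_n$, which satisfy $|dX_n|\le|dY_n|$, and then takes the infimum in \eqref{eq:rcenv}. You instead check directly that $g$ is a rank-one convex minorant of $f$, and then use the definition of $f^{\mathrm{rc}}$ as a supremum.

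Both routes rest on the same two properties of $\mathcal G_p$ and on $|P_{\mathrm{Skew}}R|\le|P_{\mathrm{Sym}}R|$ for rank-one $R$. Yours avoids the laminate representation for this half, and it also shows that $g$ is itself rank-one convex.

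Your equality arguments are the paper's, with one change. You choose unit $E\in\mathrm{Sym}_0(2)$ and $S\in\mathrm{Skew}(2)$ adapted to $A$, using $\det(E\pm S)=\det E+\det S=-\tfrac12+\tfrac12=0$. The paper instead normalises $A$ by $\mathrm{O}(2)$-conjugation.

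The step that fails is one you take on trust from the paper: the displayed $\mathcal G_p$ is zigzag convex only for $p\ge2$. For $1<p<2$, take $y_0\neq0$; near $(0,y_0)$ you are on the branch $\mathcal G_p=V_p$. Along $(th,y_0+tk)$ with $|h|=|k|=1$ this equals $(p^*-1)^p|y_0+tk|^p-|t|^p$. That function is not convex at $t=0$, because $|t|^p\gg t^2$ there.

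This cannot be repaired, because the proposition itself is false for $1<p<2$. Take $A=e_1\otimes e_1$ and $R=e_1\otimes e_2$. Then for small $t\neq0$
\[
f^{\mathrm{rc}}(A)\le\tfrac12 f(A+tR)+\tfrac12 f(A-tR)=(p^*-1)^p\bigl(1+\tfrac{t^2}{2}\bigr)^{p/2}-2^{-p/2}|t|^p<(p^*-1)^p=f(A)=\mathcal G_p(0,1).
\]
This contradicts both the inequality and the asserted equality on $\{f\ge0\}$. The paper's proof breaks at the same point: \eqref{eq:genburk} and \eqref{eq:maxdyad} are false for this $\mathcal G_p$ already for one-step martingales started at $(0,1)$.

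You should therefore restrict to $p\ge2$ and actually verify the convexity there. On the branch $|x|\le(p^*-1)|y|$, write $X=x+th$ and $Y=y+tk$ with $|h|\le|k|$. Then
\[
\partial_t^2V_p\ge p|Y|^{p-2}|k|^2(p^*-1)^{p-2}\bigl((p^*-1)^2-(p-1)\bigr)\ge0.
\]
The other branch is Burkholder's function, and the two branches agree to first order on $|x|=(p^*-1)|y|$.

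For $1<p<2$ the branches must be interchanged, so that $\mathcal G_p=V_p$ on $\{|x|\ge(p^*-1)|y|\}$, where $V_p$ is zigzag convex. With that change your minorant argument goes through unchanged, and equality then holds wherever $f(A)\le0$.
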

\begin{proof}
Thanks to Burkholder's construction, it follows that
given two $M_d(\R)$-valued martingales
$X_n,Y_n$ such that
$X_n\preccurlyeq Y_n$, then
\begin{equation*}
    (p^*-1)^p\|Y_n\|_{L^p}^p-\|X_n\|_{L^p}^p\ge
    \mathcal G_p(|X_0|,|Y_0|).
\end{equation*}
Using the same connection between laminates
and martingales as in the previous
proof, it follows that 
$f_{p,p^*-1}^{\mathrm{rc}}(A)
\ge \mathcal G_p(|P_{\mathrm{Skew}(d)}(A)|,
|P_{\mathrm{Sym}(d)}(A)|)$.
Since whenever $f_{p,p^*-1}(A)\ge 0$, we have 
\begin{equation*}
f_{p,p^*-1}(A)\ge f_{p,p^*-1}^{\mathrm{rc}}(A)
\ge \mathcal G_p(|P_{\mathrm{Skew}(d)}(A)|,
|P_{\mathrm{Sym}(d)}(A)|)
=f_{p,p^*-1}(A),
\end{equation*}
the first claim follows.
To prove equality for $d=2$ when $\mathrm{Tr}(A)=0$,
first note that by invariance under conjugation by 
$\mathrm{O}(2)$ of the integrand $f_{p,p^*-1}$
we may assume
$A$ to be of the form 
\[A=\begin{pmatrix}0&a+b\\a-b&0\end{pmatrix}.\]
Then the martingale
\begin{equation*}
    M_n\mathrel{:}=
    \frac{f_n}{\sqrt 2}(e_1\otimes e_2+e_2\otimes e_1)+
    \frac{g_n}{\sqrt 2}(e_1\otimes e_2-e_2\otimes e_1),
\end{equation*}
where $f_0=\sqrt 2a,$ $g_0=\sqrt 2b$
and $|dg_n|=|df_n|$, has 
rank-one increments and starts at $A$, hence
its law describes a laminate with barycentre $A$. Since
\[
\mathbb E(f_{p,p^*-1}(M_n))=
(p^*-1)^p\|f_n\|_{L^p}^p-
\|g_n\|_{L^p}^p,
\]
by \eqref{eq:maxdyad} it follows that
\begin{equation*}
    \inf_{(f_n,g_n)}\mathbb E(f_{p,p^*-1}(M_n))
    =\mathcal G_p(|g_0|, |f_0|)=
    \mathcal G_p(|P_{\mathrm{Skew}(2)}(A)|,
    |P_{\mathrm{Sym}(2)}(A)|)
\end{equation*}
and the result is proved.
\end{proof}
\begin{remark}
Proposition~\ref{prop:lowerconvex} can be easily
adapted into one for projection onto 
arbitrary subspaces $X$ of $M_d(\R)$. In the
notation of \cite{Cassese2025}
\[f_{X,p,C(X)(p^*-1)}^{\mathrm{rc}}(A)
\ge \mathcal G_p(|P_{X^\perp}(A)|, C(X)|P_X(A)|)
.\]
In particular, for $X=\mathrm{CO}^+(2)$, 
this provides an alternative
proof of the exact rank-one convex envelope.
The reason this argument is sharp in that case
is that the condition
$|P_{\text{CO}^+(2)}(A)|=|P_{\text{CO}^-(2)}(A)|$
is equivalent to $\det(A)=0$, so martingale
transforms in this setting
coincide exactly with martingales
having rank-one increments, which is
why one gets sharpness.
In Korn's case, we don't expect this bound to be
sharp in arbitrary dimension, since
the set of pairs of symmetric--skew-symmetric 
martingales $(A_n,B_n)$ with 
$\mathrm{rank}(dA_n+dB_n)\le 1$ seems 
rather different from that of
differentially subordinate
pairs.
To calculate the exact envelope,
a Bellman function argument
is probably needed, though the geometry
of the constraint 
$\mathrm{rank}(dA_n+dB_n)\le 1$ frustrates
naïve attempts.
\end{remark}
\begin{corollary}[Radial bounds in
Theorem~\ref{thm:Korn}]
Let $u\in W^{1,p}_0(B_{\R^d}(0,1),\R^d)$ be a
generalised-radial map, i.e.
\begin{equation*}
u(x)=r(\|x\|)x
\end{equation*}
for some function $r\colon\R^+\to M_d(\R)$. Then
\begin{equation*}
\|\mathcal A(u)\|_{L^p}
\le (p^*-1)\|\mathcal E(u)\|_{L^p}.
\end{equation*}
Similarly we have
\begin{equation*}
\|\nabla u\|_{L^p}\le c(p)\|\mathcal E(u)\|_{L^p}.
\end{equation*}
Moreover, for $p\ge 2$ both inequalities are sharp
in dimension $2$.
\footnote{It is unclear whether these inequalities
are sharp in higher dimensions as well,
but preliminary numerical evidence suggests
this is unlikely to be the case.}
The exact same results hold for the trace-free
versions of the inequality.
\end{corollary}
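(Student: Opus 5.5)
The plan is to read the corollary off Proposition~\ref{prop:radial} (Ball--Sivaloganathan), Theorem~\ref{thm:envelope}, Proposition~\ref{prop:lowerboundfull} and the explicit radial witnesses of Section~\ref{sec:lowerbound}.

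\emph{Upper bound.} Set $c=p^*-1$. By Theorem~\ref{thm:envelope}, $C^{\mathrm{rc}}(p,d)=p^*-1$, so $f_{p,c}^{\mathrm{rc}}(0)=0$. By the elementary lemma on envelopes, $g:=f_{p,c}^{\mathrm{rc}}$ is then a finite rank-one convex integrand with $g\le f_{p,c}$ and $g(0)=0$. For a generalised-radial $u$ with $r\in C^1$ compactly supported in $[0,1)$, Proposition~\ref{prop:radial} applied to $g$ gives
\[
\fint_{B} f_{p,c}(\nabla u)\ge \fint_B g(\nabla u)\ge g(0)=0 .
\]
Since $P_{\mathrm{Sym}}\nabla u=\mathcal E(u)$ and $P_{\mathrm{Skew}}\nabla u=\mathcal A(u)$, this is exactly $c^p\|\mathcal E(u)\|_{L^p}^p\ge\|\mathcal A(u)\|_{L^p}^p$.

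The same argument with the integrand $c(p)^p|P_{\mathrm{Sym}}A|^p-|A|^p$ and Proposition~\ref{prop:lowerboundfull}, which gives $C^{\mathrm{rc}}_{\mathrm{KN}}=c(p)$, yields the full-gradient bound. The trace-free versions use $C^{\mathrm{rc}}_0=p^*-1$ and the trace-free part of Proposition~\ref{prop:lowerboundfull}.

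\emph{Extension to $W^{1,p}_0$.} To pass from $C^1$ profiles to general generalised-radial $u\in W^{1,p}_0(B,\R^d)$, I would approximate $u$ by generalised-radial maps of the same form. Mollify the profile $r$ in the radial variable and cut off near $|x|=1$ with a radial cutoff. Both operations preserve the form $r(|x|)x$. The only point needing care is that $\nabla u$ converges in $L^p$, including near the origin. This step is where I expect the only genuine technical effort. I would handle it by writing
\[
\nabla u=r(|x|)+r'(|x|)\,x\otimes x/|x|,
\]
and noting that $\nabla u\in L^p$ controls $r$ and $|x|\,r'(|x|)$ in the weighted space $L^p(\rho^{d-1}\,d\rho)$. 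Standard radial approximation then converges in that space.

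\emph{Sharpness for $p\ge2$, $d=2$.} The maps $u_k=Qx\ln(|x|)^k\mathds 1_{|x|<1}$ from Proposition~\ref{prop:lowerbound2} are generalised-radial, with $r(\rho)=Q\ln(\rho)^k$, and they vanish on the boundary. They satisfy $f_k(p)\to p-1=p^*-1$, so the constant cannot be lowered. For the full gradient, the same family gives ratios tending to $\sqrt{(p-1)^2+1}=c(p)$ for $p\ge2$, which is the computation indicated in the proof of the $C_{\mathrm{KN}}$ lower bound. Since the $u_k$ are divergence-free, $\mathcal E_0=\mathcal E$ and the trace-free statements follow from the same family.
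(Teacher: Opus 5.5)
Your proposal is correct and follows the paper's route. The paper likewise applies Proposition~\ref{prop:radial} to the rank-one convex envelopes, which vanish at $0$ by Theorem~\ref{thm:envelope} and Proposition~\ref{prop:lowerboundfull}, uses a limiting argument to reach general generalised-radial $u\in W^{1,p}_0$, and takes sharpness for $p\ge 2$, $d=2$ from the radial, divergence-free maps $u_k$ of Proposition~\ref{prop:lowerbound2}. You make explicit what its two-line proof leaves implicit. For the approximation step, dilating and then convolving with a radial mollifier already preserves the form $r(|x|)x$, which spares you the weighted bookkeeping near the origin.
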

\begin{proof}
The result quickly follows
from Proposition~\ref{prop:radial}
and a limiting argument to deal with
general $u\in W^{1,p}_0$. 
Sharpness follows from the examples constructed
in Proposition~\ref{prop:lowerbound2}, which are radial.
\end{proof}
For $p\in (1,2)$, the 
behaviour of the constant
is, somewhat surprisingly,
much tamer. Indeed, 
we have a Korn inequality
even for $L^1$:
\begin{proposition}
Let $u\in W^{1,1}(\R^d)$ be generalised-radial.
Then for $d\ge 3$
\begin{equation*}
\|\mathcal A(u)\|_{L^1}\le
\left(1+\mathcal O\left(\frac1{\sqrt d}\right)\right)
\|\mathcal E(u)\|_{L^1}.
\end{equation*}
Similarly
\begin{equation*}
\|\mathcal A_0(u)\|_{L^1}
\le \left(1+\mathcal O
\left(\frac1{\sqrt d}\right)\right)
\|\mathcal E_0(u)\|_{L^1}.
\end{equation*}
Both constants are asymptotically sharp,
meaning that as $d\to \infty$ the constants
tend to $1$.
\end{proposition}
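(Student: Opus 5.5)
The plan is to work directly with the radial structure rather than through an explicit rank-one convex minorant. For $u(x)=r(\rho)x$ with $\rho=|x|$ and $\omega=x/\rho$, one computes
\[
Du(x)=r(\rho)+\rho\,(r'(\rho)\omega)\otimes\omega=:a+(b\omega)\otimes\omega,\qquad a=r(\rho),\ b=\rho r'(\rho).
\]
The $L^1$ norms then split into a radial integral $\int_0^1\rho^{d-1}\,\mathrm{d}\rho$ of a spherical average over $\omega\in\mathbb S^{d-1}$.

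\textbf{Pointwise estimate for the rank-one part.} The key elementary observation concerns $v\otimes\omega$ with $|\omega|=1$:
\[
|P_{\mathrm{Sym}}(v\otimes\omega)|^2=\tfrac12\bigl(|v|^2+\langle v,\omega\rangle^2\bigr),\qquad
|P_{\mathrm{Skew}}(v\otimes\omega)|^2=\tfrac12\bigl(|v|^2-\langle v,\omega\rangle^2\bigr).
\]
Hence the rank-one part $b\omega\otimes\omega$ has skew part pointwise dominated by its symmetric part. (This is the same fact as the lemma that $C(\mathrm{Sym}(d))=1$.) Everything therefore reduces to controlling the zeroth-order term $a=r(\rho)$. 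This term has no a priori reason to be small, but it is tied to $b$ by the boundary condition. Since $r$ is compactly supported in $[0,1)$, integration by parts gives
\[
\int_0^1\rho^{d-1}\bigl(d\,a+b\bigr)\,\mathrm{d}\rho=\int_0^1(\rho^d r)'\,\mathrm{d}\rho=0.
\]
Equivalently, in the pointwise (ODE) form, $a(\rho)=-\rho^{-d}\int_0^\rho s^{d-1}b(s)\,\mathrm{d}s$. This is a Hardy-type averaging operator of norm $\lesssim 1/d$ on $L^1(\rho^{d-1}\mathrm{d}\rho)$.

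\textbf{Main estimate.} The steps are as follows.
\begin{enumerate}
\item Bound the skew part: $\int|P_{\mathrm{Skew}}Du|\le\int|P_{\mathrm{Skew}}(b\omega\otimes\omega)|+\int|P_{\mathrm{Skew}}a|$.
\item Bound the symmetric part from below: $\int|P_{\mathrm{Sym}}Du|\ge\int|P_{\mathrm{Sym}}(b\omega\otimes\omega)|-\int|P_{\mathrm{Sym}}a|$.
\item Use the Hardy bound to get $\int\rho^{d-1}|a|\lesssim\frac1d\int\rho^{d-1}|b|$.
\item Show that the spherical average of $|P_{\mathrm{Sym}}(b\omega\otimes\omega)|\ge|b\omega|/\sqrt2$ is at least $c\,|b|/\sqrt d$ for a universal $c$. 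I expect this to follow from $\fint_{\mathbb S^{d-1}}|b\omega|\ge\fint|\langle b^te_1',\omega\rangle|\approx\sqrt{2/(\pi d)}\,\|b\|_{\mathrm{op}}$ together with $|b|\ge\|b\|_{\mathrm{op}}$.
\end{enumerate}

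\textbf{Main obstacle.} Step 4 as stated is \emph{not} good enough. Rank-one $b$ saturates $\fint|b\omega|\sim|b|/\sqrt d$, but for general $b$ the Frobenius norm can exceed the operator norm by $\sqrt d$. So I would instead run step 3 with the weighted quantity $\fint_{\mathbb S^{d-1}}|b\omega|$ in place of $|b|$. Concretely, I would bound $|a|$ by testing the ODE against a fixed unit direction, $|a|=\sup_{|\Phi|=1}\langle\Phi,a\rangle$, and note that $\langle\Phi,b\rangle=d\fint\langle\Phi\omega,b\omega\rangle\,\mathrm{d}\omega\le d\,\|\Phi\|_{\mathrm{op}}\fint|b\omega|$. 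Using $\|\Phi\|_{\mathrm{op}}\le 1$ gives $\int\rho^{d-1}|a|\lesssim\int\rho^{d-1}\fint|b\omega|$ up to a constant of order $1$, which loses all smallness in $d$. Recovering the $1/\sqrt d$ gain (comparing the Frobenius and operator norms of $\Phi$) is the step I expect to require the real work. I would try to treat the $\mathrm{Sym}$ and $\mathrm{Skew}$ parts of $a$ separately, exploiting that only $P_{\mathrm{Skew}}a$ enters the upper bound. If this fails, I would revert to the Ball–Sivaloganathan route (Proposition~\ref{prop:radial}): there I would try to build a rank-one convex minorant of $(1+K/\sqrt d)|P_{\mathrm{Sym}}A|-|P_{\mathrm{Skew}}A|$ of the form $\max$ of that integrand and a null-Lagrangian correction.

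\textbf{Trace-free case and sharpness.} The trace-free case is identical, with the $\mathrm{Id}$ component moved to the skew side; this only changes $a$ by a multiple of the identity, which is controlled the same way. For asymptotic sharpness, I would take $r(\rho)=\psi(\rho)B$ with $B$ a fixed matrix of rank $\sim d$, so that $\langle B\omega,\omega\rangle^2\approx|B\omega|^2/d$ concentrates. Then $|P_{\mathrm{Skew}}(b\omega\otimes\omega)|/|P_{\mathrm{Sym}}(b\omega\otimes\omega)|\to1$ uniformly in $\omega$, while the contribution of $a$ is lower order.
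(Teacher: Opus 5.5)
Your skeleton is the paper's. The rank-one part $b\omega\otimes\omega$ has skew part dominated by its symmetric part, so $\|\mathcal A(u)\|_{L^1}\le\|\mathcal E(u)\|_{L^1}+\sqrt2\|a\|_{L^1}$; a Hardy bound controls $a$ by $b$; then one absorbs. But the proposal stops at the one step that carries the content, and the obstacle you diagnose there is not real.

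Step~4 holds with the Frobenius norm. This is Lemma~\ref{lemma:integralmatrix}: $\fint_{\mathbb S^{d-1}}|b\omega|\,\mathrm d\omega\ge c_d|b|$ with $c_d=\fint_{\mathbb S^{d-1}}|\omega_1|\,\mathrm d\omega=\Gamma(\frac d2)/(\sqrt\pi\,\Gamma(\frac{d+1}2))\sim\sqrt{2/(\pi d)}$. The missing idea is concavity. By the singular value decomposition and rotation invariance you may take $b=\mathrm{diag}(\lambda_1,\dots,\lambda_d)$ with $|b|=1$. Then $\fint(\sum_i\lambda_i^2\omega_i^2)^{1/2}\,\mathrm d\omega$ is concave in $(\lambda_1^2,\dots,\lambda_d^2)$ on the simplex, so it is minimised at a vertex, i.e.\ at rank one. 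The gap between $|b|$ and $\|b\|_{\mathrm{op}}$ only shows that your lower bound via $\|b\|_{\mathrm{op}}$ is lossy: spreading the singular values raises $\fint|b\omega|$ correspondingly (for $b=\mathrm{Id}$ it equals $1=|b|/\sqrt d$). So the detour of testing $a$ against $\Phi$ is unnecessary.

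Your fallback is in fact closed. The martingales $f_n(e_1\otimes e_d+e_d\otimes e_1)+g_n(e_1\otimes e_d-e_d\otimes e_1)$, with $g$ a $\pm1$ transform of $f$ and $f_0=g_0=0$, have rank-one increments. Since such transforms are unbounded on $L^1$, the rank-one convex envelope of $C|P_{\mathrm{Sym}}A|-|P_{\mathrm{Skew}}A|$ is $-\infty$ at $0$ for every $C$. No rank-one convex minorant vanishing at $0$ exists, so Proposition~\ref{prop:radial} cannot deliver an $L^1$ bound; the estimate must use the structure $Du=a+b\omega\otimes\omega$ directly.

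With the lemma your steps close. Put $\alpha=\int_0^1\rho^{d-1}|a|\,\mathrm d\rho$, $\beta=\int_0^1\rho^{d-1}|b|\,\mathrm d\rho$ and $E=\|\mathcal E(u)\|_{L^1}/\omega_{d-1}$. Steps~2 and~4 give $\frac{c_d}{\sqrt2}\beta\le E+\alpha$, and Step~3 gives $\alpha\le\beta/d$, so $(1-\frac{\sqrt2}{c_d d})\alpha\le\frac{\sqrt2}{c_d d}E$. The absorption needs $c_d d>\sqrt2$, which holds exactly when $d\ge3$ ($3c_3=\frac32$, $2c_2=\frac4\pi$). This is the source of the hypothesis $d\geq 3$, which your plan never uses. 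You get $\|\mathcal A(u)\|_{L^1}\le(1+\frac{2}{c_d d-\sqrt2})\|\mathcal E(u)\|_{L^1}$.

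Three smaller corrections.
\begin{enumerate}
\item Your pointwise formula $a(\rho)=-\rho^{-d}\int_0^\rho s^{d-1}b(s)\,\mathrm ds$ is false. One integrated identity cannot give a pointwise one, and that operator is not even bounded on $L^1(\rho^{d-1}\mathrm d\rho)$ (a $\log(1/s)$ factor appears). The right relation is $b=\rho a'$, i.e.\ $a(\rho)=-\int_\rho^1 b(s)\,\frac{\mathrm ds}{s}$, and Fubini gives $\alpha\le\beta/d$. This is the paper's $\int r^{d-1}|M|\le\frac1d\int r^d|M'|$.
\item In the trace-free case, not only $a$ changes: the trace $\langle v,\omega\rangle$ of the rank-one part $v\otimes\omega$ also moves to the skew side. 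What survives is that its skew-plus-trace part has squared norm smaller than that of its trace-free symmetric part by $(1-\frac2d)\langle v,\omega\rangle^2\ge0$. The latter squared norm is still $\ge\frac12|v|^2$, so the argument goes through.
\item For sharpness, ``rank $\sim d$'' is the wrong condition ($B=\mathrm{Id}$ gives $\mathcal A(u)=0$). You need $\langle B\omega,\omega\rangle$ negligible against $|B\omega|$, e.g.\ $B$ skew. The paper takes $B=\begin{pmatrix}0&\mathrm{Id}\\-\mathrm{Id}&0\end{pmatrix}$ ($d$ even) and $\psi(\rho)=1-\rho^2$. Then $\langle B\omega,\omega\rangle=0$, the rank-one parts have equal symmetric and skew norms, and $\mathrm{tr}\,\nabla u=0$, so one example covers both inequalities.
\end{enumerate}
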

Before proving the result, we need an auxiliary lemma:
\begin{lemma}\label{lemma:integralmatrix}
Let $A\in M_d(\R)$. Then
\begin{equation*}
\fint_{\mathbb{S}^{d-1}}|A\theta|\mathrm d\theta
\ge c_d |A|,
\end{equation*}
where $c_d$ is defined as
\begin{equation*}
c_d\mathrel{:}=\fint_{\mathbb{S}^{d-1}}
|\theta_1|\mathrm{d}\theta
=\frac{\Gamma(\frac d2)}
{\sqrt\pi\,\Gamma(\frac{d+1}{2})}.
\end{equation*}
\end{lemma}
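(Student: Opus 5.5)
We want $\fint_{\mathbb S^{d-1}}|A\theta|\,\mathrm d\theta\ge c_d|A|$ for every $A\in M_d(\R)$, with $c_d=\fint_{\mathbb S^{d-1}}|\theta_1|\,\mathrm d\theta$. The plan is to reduce to diagonal matrices, express the spherical average as a Gaussian expectation, and then use concavity of the square root. Equality will hold for rank-one $A$.

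\emph{Reduction to diagonal form.} Write the singular value decomposition $A=U\Sigma V^t$ with $U,V\in\mathrm O(d)$ and $\Sigma=\mathrm{diag}(s_1,\dots,s_d)$, $s_i\ge 0$. Since $|U\Sigma V^t\theta|=|\Sigma V^t\theta|$ and the normalised surface measure is invariant under $\theta\mapsto V^t\theta$, the left-hand side equals $\fint_{\mathbb S^{d-1}}|\Sigma\theta|\,\mathrm d\theta$. The right-hand side is $c_d|A|=c_d(\sum_i s_i^2)^{1/2}$.

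\emph{The key estimate.} We need
\begin{equation*}
\fint_{\mathbb S^{d-1}}\Big(\sum_i s_i^2\theta_i^2\Big)^{1/2}\mathrm d\theta\ \ge\ c_d\Big(\sum_i s_i^2\Big)^{1/2}.
\end{equation*}
Both sides are $1$-homogeneous in $s$, so we may normalise $\sum_i s_i^2=1$. Set $\lambda_i=s_i^2$, so that $\lambda$ ranges over the standard simplex $\Delta$. The function
\[
\Phi(\lambda):=\fint_{\mathbb S^{d-1}}\Big(\sum_i\lambda_i\theta_i^2\Big)^{1/2}\mathrm d\theta
\]
is concave on $\Delta$, because it is an average of the concave functions $\lambda\mapsto(\sum_i\lambda_i\theta_i^2)^{1/2}$ (square roots of nonnegative linear forms). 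A concave function on a simplex attains its minimum at a vertex. At the vertex $e_j$ we get $\Phi(e_j)=\fint|\theta_j|\,\mathrm d\theta=c_d$, by symmetry of the sphere. Hence $\Phi\ge c_d$ on $\Delta$, which is the claim, with equality for rank-one $A$, so the constant is sharp.

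\emph{Evaluating $c_d$.} To get the closed form, write a standard Gaussian vector as $G=|G|\theta$, where $\theta$ is uniform on the sphere and independent of $|G|$. Then
\[
c_d=\frac{\mathbb E|G_1|}{\mathbb E|G|}=\frac{\sqrt{2/\pi}}{\sqrt2\,\Gamma(\frac{d+1}2)/\Gamma(\frac d2)}=\frac{\Gamma(\frac d2)}{\sqrt\pi\,\Gamma(\frac{d+1}2)}.
\]
The only step requiring care is the concavity and vertex-minimum argument. It is elementary, but the substitution $\lambda_i=s_i^2$ is essential: the integrand is not concave as a function of $s$ itself, only as a function of $\lambda$.
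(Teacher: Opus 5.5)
Your proof is correct and follows essentially the same route as the paper: reduce to a diagonal matrix via the singular value decomposition, normalise $|A|=1$, and use concavity of $\lambda\mapsto\fint_{\mathbb S^{d-1}}(\sum_i\lambda_i\theta_i^2)^{1/2}\,\mathrm d\theta$ on the simplex to place the minimum at a vertex, where it equals $c_d$. Your Gaussian derivation of the closed form of $c_d$ and your observation that equality holds for rank-one $A$ are both correct, and both go beyond the paper, which states the formula for $c_d$ without proof and does not discuss sharpness.
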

\begin{proof}
We can assume without loss of generality that $|A|=1$.
Using the singular value decomposition
we can assume that $A$ is diagonal,
$A=\mathrm{diag}(\lambda_1,\dots, \lambda_d)$ so
\begin{equation*}
\fint_{\mathbb{S}^{d-1}}|A\theta|\mathrm{d}\theta
=\fint \sqrt{\sum \lambda_i^2\theta_i^2}\mathrm{d}\theta
=\colon f(\lambda_1^2,\dots, \lambda_d^2).
\end{equation*}
The function $f(s_1,\dots, s_d)$ is concave
and defined on
the simplex $\{\sum s_i=1, s_i\ge 0\}$,
so its min is attained at the vertices of the simplex,
and the result follows.
\end{proof}
\begin{proof}
We assume without loss of generality
$u=M(\|x\|)x$ with $M(r)$
smooth and compactly supported in $[0,1]$.
As a straightforward calculation shows, we have
\begin{equation*}
\nabla u= M(r)+(rM'(r) \theta)\otimes \theta,
\end{equation*}
where $r=\|x\|, \theta=x/\|x\|$. It follows that
\begin{equation*}
    |\mathcal A(u)|\le |\mathcal E(u)|+\sqrt 2 |M(r)|.
\end{equation*}
It thus suffices to prove that
$\|M(r)\|_{L^1}\le
\mathcal O\left(1/\sqrt d\right)
\|\mathcal E(u)\|_{L^1}$.
Since
\begin{equation*}
    \frac{1}{\omega_{d-1}}\|M\|_{L^1}=
    \int_0^\infty r^{d-1}|M(r)|\mathrm{d}r\le
    \frac1d \int_0^\infty r^{d}|M'(r)|\mathrm{d}r
\end{equation*}
where $\omega_{d-1}$ is the surface area
of $\mathbb S^{d-1}$,
we focus our attention on estimating $M'(r)$.
Define the radial averages
$e(r), d(r)$ respectively as
\begin{align*}
e(r)
&\mathrel{:}=
\fint_{\mathbb S^{d-1}}
|\mathcal E(u)(r\theta)|\mathrm{d}\theta\\ 
d(r)
&\mathrel{:}=
\fint_{\mathbb S^{d-1}}
|\mathrm{Sym}(rM'(r)\theta\otimes \theta)|
\mathrm{d}\theta.
\end{align*}
As one easily checks, 
\begin{equation*}
d(r)\ge \frac{r}{\sqrt{2}}
\fint_{\mathbb S^{d-1}} |M'(r)\theta|\mathrm{d}\theta.
\end{equation*}
Thanks to Lemma~\ref{lemma:integralmatrix},
we can bound the right-hand side from below, obtaining
\begin{equation*}
d(r)\ge \frac{c_d}{\sqrt{2}}r|M'(r)|.
\end{equation*}
Moreover $d(r)\le e(r)+|\mathrm{Sym}(M(r))|$, hence
\begin{equation*}
    |M'(r)|\le \frac{\sqrt 2}{c_d r}
    \left(e(r)+|\mathrm{Sym}(M(r))|\right).
\end{equation*}
Integrating this against $r^d\mathrm{d}r$ we obtain
\begin{equation*}
    \int_0^\infty |M'(r)|r^d\mathrm{d}r
    \le \frac{\sqrt 2}{c_d \omega_{d-1}}
    (\|\mathcal E(u)\|_{L^1}+\|\mathrm{Sym}(M)\|_{L^1}).
\end{equation*}
On the other hand, since $\fint_{\mathbb{S}^{d-1}}
\theta\otimes \theta=1/d \mathrm{Id}$, we have
\begin{equation*}
    \fint_{\mathbb S^{d-1}} \mathcal E(u)(r\theta)
    \mathrm d \theta
    =\mathrm{Sym} M(r)+\frac rd\mathrm{Sym} M'(r)
\end{equation*}
and so we can estimate
$|\mathrm{Sym}(M(r))|$ from above by 
\begin{equation*}
    |\mathrm{Sym}(M(r))|\le e(r)+\frac{r}{d}|M'(r)|.
\end{equation*}
Integrating this proves
\begin{equation*}
    \|\mathrm{Sym}(M(r))\|_{L^1}
    \le \|\mathcal E(u)\|_{L^1}+
    \frac{\omega_{d-1}}d \int_0^\infty
    |M'(r)|r^d\mathrm{d}r.
\end{equation*}
Combining the two inequalities, it follows that
\begin{equation*}
\left(1-\frac{\sqrt 2}
{c_d d}\right)\int_0^\infty |M'(r)|r^{d}\mathrm{d}r
\le \frac{2\sqrt 2}{c_d\omega_{d-1}}
\|\mathcal E(u)\|_{L^1}. 
\end{equation*}
Therefore if $c_d d>\sqrt 2$
(which is always true provided $d>2$) we have
\begin{align*}
    \|M\|_{L^1}&\le \frac{\omega_{d-1}}d
    \frac{1}{1-\frac{\sqrt 2}{c_d d}}
    \frac{2\sqrt 2}{c_d\omega_{d-1}}
    \|\mathcal E(u)\|_{L^1}
    \\&=\frac{2\sqrt 2}
    {c_dd-\sqrt 2}\|\mathcal E(u)\|_{L^1}\\
    &=\mathcal O\left(\frac{1}{\sqrt d}\right)
    \|\mathcal E(u)\|_{L^1}.
\end{align*}
This completes the proof of the estimate.
The proof for the trace-free
version of Korn follows in exactly the same way.
To prove asymptotic sharpness, it suffices to consider 
explicit witnesses: for example, in even dimensions,
$u(x)=\chi_{|x|\le 1}(1-|x|^2)
\begin{pmatrix}
0&\mathrm{Id}\\ -\mathrm{Id}&0
\end{pmatrix}x$
witnesses the asymptotic sharpness of the estimate.
\end{proof}
\begin{remark}
A similar behaviour, though with the roles
of the intervals $(1,2)$ and $(2,\infty)$
switched, is known for the Beurling-Ahlfors transform
(see~\cite{Banuelos2013},~\cite
{Strzelecki2017TheFunctions}).
\end{remark}
These considerations suggest the following
strengthening of our conjecture:
\begin{conjecture}
$-\mathcal G_p(|P_{\mathrm{Skew}(d)}(A)|,
|P_{\mathrm{Sym}(d)}(A)|)$
is quasiconvex at some matrix $A_0$.
\footnote{Note that quasiconvexity at $A_0$ implies,
by homogeneity of the function, quasiconvexity at $0$.}
\end{conjecture}
\begin{theorem}\label{thm:Korn2d-bound}
    \begin{equation} \label{eq:Korn2d-bound}
        p^*-1\le C(p,2)\le 1.158(p^*-1).
    \end{equation}
\end{theorem}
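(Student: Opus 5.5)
The lower bound $p^*-1\le C(p,2)$ is already available: by Theorem~\ref{thm:envelope}, $C^{\mathrm{rc}}(p,2)=p^*-1$, and since quasiconvex integrands are rank-one convex we have $C(p,2)=C^{\mathrm{qc}}(p,2)\ge C^{\mathrm{rc}}(p,2)$. So the real work is the upper bound. I will \emph{not} use Lemma~\ref{lemma:linalg1}, which only yields $\sqrt 3$. Instead I will exploit the complex structure of $\R^2$, which the footnote after the $\mathfrak K(d)$ proposition and the trace-free discussion already point to.

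\textbf{Step 1 (complex reformulation).} Identify $u$ with $f=u_1+iu_2$. Then $\mathrm{Sym}(2)=\mathrm{span}(\mathrm{Id})\oplus\mathrm{Sym}_0(2)$, and $\mathrm{Sym}_0(2)=\mathrm{CO}^-(2)$ corresponds to $\bar\partial f$. The conformal part $\mathrm{CO}^+(2)=\mathrm{span}(\mathrm{Id})\oplus\mathrm{Skew}(2)$ corresponds to $\partial f$, with $\mathrm{Re}\,\partial f$ giving the trace and $\mathrm{Im}\,\partial f$ giving $\mathcal A(u)$. Since $\partial f=T_{BA}(\bar\partial f)$, we get pointwise, up to fixed normalising constants,
\[
|\mathcal A(u)|\simeq|\mathrm{Im}\,T_{BA}(\bar\partial f)|,\qquad
|\mathcal E(u)|^2\simeq|\mathrm{Re}\,\partial f|^2+|\bar\partial f|^2\ \ge\ |\bar\partial f|^2 .
\]
It therefore suffices to bound the operator $g\mapsto \mathrm{Im}\,T_{BA}g$ on complex-valued $g$. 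Dropping the trace term only costs us something if it is used; I will first try without it.

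\textbf{Step 2 (martingale structure).} Run the heat-martingale construction of Proposition~\ref{prop:upperbound} in the complex form of Bañuelos--Janakiraman. With $Y_t$ the heat martingale of $g=\bar\partial f$, the martingale $Y_t$ is \emph{conformal}, meaning $\langle Y\rangle$ has equal real and imaginary quadratic variations and vanishing covariation. The martingale $X_t$ representing $\mathrm{Im}\,T_{BA}g$ is then \emph{real-valued}, and $\mathrm d\langle X\rangle_t\le \mathrm d\langle Y\rangle_t$ (up to the fixed normalisation). This is the point where the Sym--Skew problem is better than the full Iwaniec problem: only one real component of $T_{BA}g$ has to be controlled.

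\textbf{Step 3 (sharp inequality for the pair).} For a real martingale differentially subordinate to a conformal martingale, Burkholder-type functions with the conformal refinement are available, as in \cite{Banuelos2008} and \cite{Volberg}. These give $\|X\|_p\le \kappa_p\|Y\|_p$ with an explicit $\kappa_p$, smaller than the $\sqrt{2}$-type loss that appears when $X$ is complex. I would then pass to the limit $T\to\infty$ exactly as in Proposition~\ref{prop:upperbound}, using $L^p$ convergence of the conditional expectations, and use duality to handle $p<2$.

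\textbf{Step 4 (extracting the constant).} Finally I would compute $\sup_p \kappa_p/(p^*-1)$ and check numerically that it is at most $1.158$.

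I expect the main obstacle to be Steps~3--4: choosing the right special function, and confirming that the supremum of the ratio really stays below $1.158$ uniformly in $p$, including near $p=2$ and as $p\to\infty$. If the cleanest available bound for real-versus-conformal pairs is not good enough, my fallback is to reinstate the discarded $\mathrm{Re}\,\partial f$ term. That amounts to treating $Y$ as the pair (conformal part, real part) with a weighted quadratic-variation comparison, and then optimising a one-parameter family of representations of $\mathcal A(u)$, in the spirit of $\mathcal L_{\alpha,\beta}$, before applying the Burkholder-type bound.
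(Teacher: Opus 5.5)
Your lower bound and Step 1 are correct and match the paper: $C(p,2)\le\|\mathcal I\|_{L^p(\C;\C)\to L^p(\C;\R)}$ with $\mathcal I g=\Im(T_{BA}g)$.

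\textbf{The gap is in Step 2.} The heat martingale $Y_t=P_{T-t}g(\mathcal B_t)$ of a complex-valued $g$ is not conformal. Its increment is $\mathrm dY_t=\partial_1Ug\,\mathrm d\mathcal B^1_t+\partial_2Ug\,\mathrm d\mathcal B^2_t$. Conformality would force $(\partial_1Ug)^2+(\partial_2Ug)^2=4\,\partial Ug\,\bar\partial Ug\equiv 0$, which fails for general $g$; being of the form $g=\bar\partial f$ does not help.

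In the Ba\~nuelos--Janakiraman picture the conformal martingale is the \emph{output}. Up to sign conventions, $T_{BA}g$ is represented by $Z_t=\int_0^t 2\,\partial Ug\,(\mathrm d\mathcal B^1_s-i\,\mathrm d\mathcal B^2_s)$. For your real martingale $X=\Im Z$ you only get $\mathrm d\langle X\rangle_t=4|\partial Ug|^2\,\mathrm dt\le 2\,\mathrm d\langle Y\rangle_t$, with equality wherever $\bar\partial Ug=0$. Burkholder then gives only $\sqrt2(p^*-1)$. The conformal dominating martingale your Step 3 special function is meant to exploit is simply not present in the primal problem. Consequently Step 4 has no concrete $\kappa_p$ behind it, and $1.158$ cannot be ``checked'' from the structure you describe.

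\textbf{The missing idea is the duality step.} In the paper it is not a device for passing to $p<2$; it is what creates the usable conformal structure. Write $T_{BA}=A+iB$ with $A,B$ real and self-adjoint. Then $\mathcal I(f_1+if_2)=Bf_1+Af_2$, so $\mathcal I^*h=(Bh,Ah)$ and $|\mathcal I^*h|=|T_{BA}h|$ for real-valued $h$. Hence
\[
\|\mathcal I\|_{L^p(\C;\C)\to L^p(\C;\R)}=\|T_{BA}\|_{L^{p'}(\C;\R)\to L^{p'}(\C;\C)} .
\]
For real-valued inputs the driving martingale $Y$ is real, and the output $Z$ is conformal with $\langle Z\rangle=2\langle Y\rangle$ exactly. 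This is the setting in which improved conformal-martingale estimates apply.

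The paper does not re-derive such an estimate. It imports $\|T_{BA}\|_{L^q(\C;\R)\to L^q(\C;\C)}\le 1.158(q^*-1)$ from \cite{Banuelos2026Cotlar} and takes $q=p'$. Your Steps 3--4 need to be replaced by this reduction together with that (or an equivalent) bound for $T_{BA}$ on real-valued functions. Your fallback of reinstating $\Re\,\partial f$ is then unnecessary.
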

\begin{proof}
    The proof will take advantage of the complex structure on 
    $\R^2$. More precisely,
identify a function $u\colon \R^2\to \R^2$
with $g\colon \C\to \C$
defined as
\[
g(z)=u_1(x,y)+iu_2(x,y).
\]
Then it follows that
\begin{align*}
    |\bar \partial g(z)|&=
    \sqrt 2|\mathcal E_0(u)(x,y)|,\\
    |\Im(\partial g)(z)|&=
    \sqrt 2|\mathcal A(u)(x,y)|.
\end{align*}
For compactly supported $u$, the Beurling--Ahlfors
transform satisfies
\[
    T_{BA}(\bar\partial g)=\partial g.
\]
The operator relevant to Korn's inequality is therefore the
real-linear operator\footnote{Not to be confused with the complex-linear operator $\Im(T_{BA})$}
\[
    \mathcal I f\mathrel{:=}\Im(T_{BA}f),
    \qquad
    \mathcal I\colon L^p(\C;\C)\longrightarrow L^p(\C;\R).
\]
The identities above imply
\begin{equation}\label{eq:Korn-via-BA}
    C(p,2)\le \|\mathcal I\|_{L^p(\C;\C)\to L^p(\C;\R)}.
\end{equation}
To estimate this norm, write $T_{BA}=A+iB$ on real-valued
functions. If $f=f_1+if_2$, then
\[
    \mathcal I f=Bf_1+Af_2.
\]
Since $A$ and $B$ are self-adjoint, the adjoint of $\mathcal I$,
after identifying $\C$ with $\R^2$, satisfies
\[
    \mathcal I^*h=(Bh,Ah),
    \qquad |\mathcal I^*h|=|T_{BA}h|
\]
for real-valued $h$. Consequently,
\begin{align*}
\|\mathcal I\|_{L^p(\C;\C)\to L^p(\C;\R)}
&=\|\mathcal I^*\|_{L^{p'}(\C;\R)\to L^{p'}(\C;\C)}\\
&=\|T_{BA}\|_{L^{p'}(\C;\R)\to L^{p'}(\C;\C)}.
\end{align*}
Since the Beurling-Ahlfors transform, when restricted to real-valued functions, satisfies
(see~\cite[p.~46]{Banuelos2026Cotlar})
\[
    \|T_{BA}\|_{L^q(\C;\R)\to L^q(\C;\C)}
    \le 1.158(q^*-1)
\]
for $q\in(1,\infty)$,
taking $q=p'$ in this
estimate yields
\[
    C(p,2)\le 1.158(p^*-1).
\]
Together with the lower bound from Theorem~\ref{thm:envelope},
the proof is complete.
\end{proof}
\begin{remark}\label{rmk:iwaniecreal}
It follows from the proof of the above theorem that 
\[
p^*-1=C(p,2)
\]
would be a consequence of the Iwaniec
conjecture for real-valued functions.
\end{remark}
\section{Korn in other function spaces}
As an application of the previous 
results (in particular of the
differential subordination obtained),
we briefly outline some
other dimension-free estimates that can be obtained
for Korn's inequality in other spaces:
Sobolev-Orlicz spaces and weighted Sobolev spaces.
\subsection{Orlicz spaces}
\label{sec:orlicz}
Let us briefly recall the necessary notions
concerning Orlicz spaces. Given a Young function $\Phi$,
it is well known (see~\cite{Breit2011}) that
Korn's inequality holds in a Sobolev-Orlicz
space $\dot{W}^1L^{\Phi}$
if and only if
$\Phi$ satisfies both $\nabla_2$ and $\Delta_2$.
Before stating our result, let us recall
some results and notation. 
Given a Young function $\Phi$ which we assume $C^1$,
we define its Simonenko indices as follows:
\begin{align*}
    i_\Phi=
    \inf_{x\in (0,\infty)} \frac{x\Phi'(x)}{\Phi(x)}
\end{align*}
and its upper index as
    \begin{align*}
    s_\Phi=
    \sup_{x\in (0,\infty)} \frac{x\Phi'(x)}{\Phi(x)}
\end{align*}
Recall that $1\le i_\Phi\le s_\Phi\le +\infty$
and that $1<i_\Phi\le s_\Phi<+\infty$ is equivalent to
$\nabla_2$ and $\Delta_2$ being true for $\Phi$. In this
case, the constant in the Korn inequality is 
dimension-free and, indeed, its dependence
on $\Phi$ is in some sense the best one could hope for.
This essentially follows from Theorem~\ref{thm:Korn} and
interpolation theory for Orlicz spaces.
\begin{theorem}\label{thm:Orlicz}
Let $\Phi$ be a Young function satisfying 
both the $\Delta_2$ and the $\nabla_2$
conditions and let $p=i_{\Phi}$
and $q=s_{\Phi}$ be its Simonenko indices. Then 
\begin{equation*}
    \left\|\mathcal A(u)\right\|_{L^{\Phi}}
    \le \sqrt{3}K\max(p^*-1,q^*-1)
    \left\|\mathcal E(u)\right\|_{L^{\Phi}},
\end{equation*}
where the constant $K=K(p,q)$ satisfies 
\begin{equation*}
K\le 2^{\frac{1}{pq'}
+\min\left(\frac1p,\frac1{q'}\right)}<4
\end{equation*}
In particular, if $p,q\in (1,2]$ or $p,q\in [2,\infty)$
then we get
\begin{equation*}
    C(\Phi, \R^d)\le 2\sqrt 3\max(p^*-1,q^*-1).
\end{equation*}
\end{theorem}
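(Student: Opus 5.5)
The plan is to realise the operator $\mathcal E(u)\mapsto\mathcal A(u)$ as the linear operator $S\mathrel{:}=2P_{\mathrm{Skew}}(R\otimes R)$ on $L^p(\R^d,\mathrm{Sym}(d))$, using \eqref{eq:RieszKorn1}. Theorem~\ref{thm:Korn} gives $\|S\|_{L^r\to L^r}\le\sqrt3(r^*-1)$ for every $r\in(1,\infty)$, with a bound independent of $d$. I will then interpolate between two exponents $p_0<p\le q<q_0$ using the Orlicz-space Riesz--Thorin theorem of~\cite{RieszThorinOrlicz}. That theorem says that if a linear operator is bounded on $L^{p_0}$ and on $L^{q_0}$, and the indices of $\Phi$ satisfy $p_0\le i_\Phi\le s_\Phi\le q_0$, then it is bounded on $L^\Phi$ with norm at most $K\max(\|S\|_{p_0},\|S\|_{q_0})$, where $K$ is an explicit constant depending only on the exponents. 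The theorem is vector-valued here, since the functions take values in $\mathrm{Sym}(d)$ and $\mathrm{Skew}(d)$. I will state it for functions valued in a finite-dimensional Hilbert space with the Frobenius norm; the interpolation argument is insensitive to this, because the Luxemburg norm only involves $|f|$.

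The key steps, in order, are as follows.

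First, I reduce to $u$ smooth and compactly supported. This is possible by density in $\dot W^1L^\Phi$, which holds because $\Phi\in\Delta_2$. In this setting $\mathcal A(u)=S\mathcal E(u)$ pointwise almost everywhere.

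Second, I take the endpoint exponents to be $p_0=p=i_\Phi$ and $q_0=q=s_\Phi$. These lie in $(1,\infty)$ precisely because of the $\Delta_2$ and $\nabla_2$ conditions. The Orlicz interpolation result in its sharp form (Simonenko indices, endpoints equal to the indices) then gives
\[
\|S\|_{L^\Phi\to L^\Phi}\le K(p,q)\max\bigl(\|S\|_{L^p},\|S\|_{L^q}\bigr)\le\sqrt3\,K\max(p^*-1,q^*-1).
\]

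Third, I bound $K$. I will track the constant in the proof from~\cite{RieszThorinOrlicz}, which comes from comparing the Luxemburg norm with the Orlicz (Amemiya) norm, a comparison costing at most a factor $2$. To this is added the loss from the complex-interpolation step, which is expressed through the modular inequalities $\Phi(\lambda t)\le\lambda^{q}\Phi(t)$ and $\Phi(\lambda t)\ge \lambda^{p}\Phi(t)$ for $\lambda\ge1$; these follow by integrating $p\le x\Phi'/\Phi\le q$. Combining these, I expect the exponent $\tfrac1{pq'}+\min(\tfrac1p,\tfrac1{q'})$ to emerge, and it is $<2$, so that $K<4$. In the special cases $p,q\in(1,2]$ or $p,q\in[2,\infty)$ I expect the simpler estimate $K\le2$. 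The reason is that on each side of $2$ the function $r\mapsto r^*-1$ is monotone, so no endpoint mismatch is incurred, and only the norm-equivalence factor $2$ remains. The modular version of the inequality follows in the same way, by applying the modular form of the interpolation theorem.

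The main obstacle is the third step: extracting the explicit constant from~\cite{RieszThorinOrlicz} and checking that it applies verbatim to Hilbert-space-valued functions. If that bookkeeping does not give exactly the stated exponent, I would fall back on a Marcinkiewicz-type real interpolation. Using the weak-type bounds at two exponents slightly inside $(p,q)$ gives a dimension-free $K$ but a worse explicit constant. The dimension-independence claim would be unaffected either way.
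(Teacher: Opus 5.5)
Your overall strategy is the paper's. You treat $\mathcal E(u)\mapsto\mathcal A(u)$ as the linear operator $S=2P_{\mathrm{Skew}}(R\otimes R)$, bound it by $\sqrt3(r^*-1)$ on every $L^r$, and conclude with the Orlicz Riesz--Thorin theorem of~\cite{RieszThorinOrlicz}.

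A small point first. The bound you need comes from the \emph{proof} of Theorem~\ref{thm:Korn}, not its statement. Interpolation requires $S$ to be bounded on all of $L^r(\R^d,\mathrm{Sym}(d))$, not only on the range of $\mathcal E$, and the martingale argument does give this.

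The real gap is in your second step. The interpolation theorem, as available here, needs the exponents to bracket the indices \emph{strictly}, $p_0<i_\Phi\le s_\Phi<q_0$. Your restatement with non-strict inequalities, and the appeal to a ``sharp form'' with $p_0=i_\Phi$ and $q_0=s_\Phi$, is not what the cited result gives. So the bound $\|S\|_{L^\Phi}\le K(p,q)\max(\|S\|_{L^p},\|S\|_{L^q})$ is unjustified as written.

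The missing ingredient is the paper's ``limiting argument'', and your first paragraph already had the right set-up. Apply the theorem with $p_0<p\le q<q_0$ to get $\|S\|_{L^\Phi}\le\sqrt3\,K(p_0,q_0)\max(p_0^*-1,q_0^*-1)$, then let $p_0\uparrow p$ and $q_0\downarrow q$. This works only because the Korn bound holds at \emph{every} exponent and is continuous in $r$. It also uses that the explicit bound $2^{1/(p_0q_0')+\min(1/p_0,1/q_0')}$ is continuous in $(p_0,q_0)$. For an operator known to be bounded only on $L^p$ and $L^q$ this route would not be available. For the same reason your fallback points the wrong way: Marcinkiewicz/Boyd-type interpolation needs exponents slightly \emph{outside} $[p,q]$, not inside $(p,q)$.

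Your third step is not really an obstacle. The bound on $K$ is supplied by the cited theorem and need not be re-derived. However, your justification of the special case $K\le 2$ is wrong. $K(p,q)$ is a constant of the interpolation theorem and does not depend on the operator, so the monotonicity of $r\mapsto r^*-1$ plays no role. The claim is simple arithmetic on the exponent. Write $a=1/p$ and $b=1/q'$, and recall $p\le q$.
\begin{itemize}
\item If $2\le p\le q$, then $a\le\frac12\le b$, so the exponent equals $a(1+b)<2a\le 1$.
\item If $p\le q\le 2$, then $b\le\frac12\le a$, so the exponent equals $b(1+a)<2b\le 1$.
\end{itemize}
In both cases $K<2$, which gives $C(\Phi,\R^d)\le 2\sqrt3\max(p^*-1,q^*-1)$.
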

To prove the result, we need a suitable 
version of Riesz-Thorin:
\begin{theorem}
    [{\cite[Theorem 5.1 and Remark 5.2]
    {RieszThorinOrlicz}}]
Take $p,q\in (1,\infty)$ and let
$\Phi$ be a $C^1$ $N$-function
such that $p<i_\Phi<s_\Phi<q$.
Then, given a linear operator $T$ bounded
on $L^p$ and $L^q$
respectively, $T$ is also bounded on $L^{\Phi}$
(with both the Luxemburg and the Orlicz norm) 
and its norm is bounded as follows:
\begin{equation*}
\|T\|_{L^\Phi}\le K\max(\|T\|_{L^p}, \|T\|_{L^q}),
\end{equation*}
where the constant $K=K(p,q)$ satisfies 
\begin{equation*}
K\le 2^{\frac{1}{pq'}+
\min\left(\frac1p,\frac1{q'}\right)}<4.
\end{equation*}
\end{theorem}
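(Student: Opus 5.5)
The plan is to prove the interpolation bound by a level-set (Marcinkiewicz-type) splitting adapted to the Orlicz modular, rather than by complex interpolation. Throughout, write $M\mathrel{:}=\max(\|T\|_{L^p},\|T\|_{L^q})$.

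\emph{Step 1: structure of $\Phi$.} Since $\Phi$ is $C^1$ and $p<i_\Phi\le s_\Phi<q$, the definition of the Simonenko indices gives that $t\mapsto\Phi(t)/t^p$ is nondecreasing and $t\mapsto\Phi(t)/t^q$ is nonincreasing. Consequently, for $\lambda\ge1$,
\[
\lambda^p\Phi(t)\le\Phi(\lambda t)\le\lambda^q\Phi(t),
\]
with the reversed inequalities for $\lambda\le1$. I will then write $\Phi$, up to a controlled constant, as a superposition of the elementary Young functions $\phi_s(t)\mathrel{:}=\min\{(t/s)^p,(t/s)^q\}$, namely
\[
\Phi(t)\simeq\int_0^\infty \phi_s(t)\,\diff\nu(s)
\]
for a positive measure $\nu$. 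The natural candidate is to define $\nu$ through $\Phi'$ and then compare using the two monotonicity properties above. The strict inequalities $p<i_\Phi$ and $s_\Phi<q$ are exactly what make $\nu$ positive and the comparison two-sided.

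\emph{Step 2: uniform estimate for each elementary function.} Fix $s>0$ and split $f=f\mathds 1_{|f|>s}+f\mathds 1_{|f|\le s}=:f_1+f_2$. Use $|Tf|\le|Tf_1|+|Tf_2|$, together with the elementary inequality $\phi_s(a+b)\le 2^{p-1}(a/s)^p+2^{q-1}(b/s)^q$ (and its variants that exploit the minimum). Apply the $L^p$ bound to $f_1$ and the $L^q$ bound to $f_2$. This gives
\[
\int\phi_s\!\left(\frac{|Tf|}{cM}\right)\lesssim\int \bigl(|f|/s\bigr)^p\mathds 1_{|f|>s}+\bigl(|f|/s\bigr)^q\mathds 1_{|f|\le s}=\int\phi_s(|f|).
\]
Here $c$ is a power of $2$ that is to be optimised. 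The choice of where to put the power $2$ (on the $p$-piece or on the $q$-piece) is what produces the exponent $\min(1/p,1/q')$.

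\emph{Step 3: integrate and pass to the Luxemburg norm.} Integrating the estimate of Step 2 against $\diff\nu(s)$ and using Fubini yields a modular inequality
\[
\int\Phi\!\left(\frac{|Tf|}{cM}\right)\le C\int\Phi(|f|).
\]
Normalising $\|f\|_{L^\Phi}=1$, I then absorb the multiplicative constant $C$ into the argument by means of the lower growth $\Phi(\lambda t)\ge\lambda^p\Phi(t)$, at the cost of a factor $C^{1/p}$. This is presumably the source of the factor $2^{1/(pq')}$, and it gives $\|Tf\|_{L^\Phi}\le KM$. The Orlicz-norm version then follows by duality, since the complementary function $\Phi^*$ has indices $s_\Phi'\le i_\Phi'$ lying strictly between $q'$ and $p'$, and $T^*$ is bounded on $L^{q'}$ and on $L^{p'}$ with the same norms.

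The main obstacle is the bookkeeping of constants needed to land exactly on $K\le 2^{1/(pq')+\min(1/p,1/q')}$: both the superposition comparison in Step 1 and the splitting inequality in Step 2 must be nearly lossless. If the clean bound does not come out, I would fall back on deducing only the qualitative statement with some dimension-free $K<4$ and citing \cite{RieszThorinOrlicz} for the explicit constant. Theorem~\ref{thm:Orlicz} then follows by applying the interpolation result to $T=2P_{\mathrm{Skew}}R\otimes R$ on $\mathrm{Sym}$-valued functions, with the $L^p$ and $L^q$ bounds $\sqrt3(p^*-1)$ and $\sqrt3(q^*-1)$ from Theorem~\ref{thm:Korn}.
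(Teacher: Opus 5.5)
\paragraph{Verdict.} The proposal has a genuine gap: it does not prove the stated constant $K$, and the scheme as set up cannot reach it. Note first that the paper gives no proof of this statement. It is quoted from \cite{RieszThorinOrlicz}, and its substance is the explicit bound on $K$.

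\paragraph{Two steps need repair before anything else.}

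\emph{Step 2.} Your elementary inequality is false. Take $s=1$, $p=2$, $q=4$, $a=0.6$, $b=0.4$. Then $\phi_1(a+b)=1$, while $2^{p-1}a^p+2^{q-1}b^q=0.72+0.2048=0.9248<1$. What does hold is
\[
\phi_s(a+b)\le\phi_s(2a)+\phi_s(2b)\le(2a/s)^p+(2b/s)^q .
\]
With the split $f=f\mathds{1}_{|f|>s}+f\mathds{1}_{|f|\le s}$, this gives $\int\phi_s(|Tf|/(2M))\le\int\phi_s(|f|)$. So the split costs a full factor $c=2$.

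\emph{Step 1.} Defining $\nu$ through $\Phi'$ does not give a positive measure in general. Set $\kappa(u)\mathrel{:}=u^{-p/(q-p)}\Phi(u^{1/(q-p)})$. The index conditions say exactly that $\kappa$ is nondecreasing with $\kappa(u)/u$ nonincreasing, i.e.\ quasi-concave. A positive superposition of the $\phi_s$ corresponds to $\kappa$ being concave, which is stronger. The standard fix is the least concave majorant $\hat\kappa$, which satisfies $\kappa\le\hat\kappa\le2\kappa$. Then $\tilde\Phi(t)\mathrel{:}=t^p\hat\kappa(t^{q-p})$ is a positive superposition of the $\phi_s$ with $\Phi\le\tilde\Phi\le2\Phi$. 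The strictness of $p<i_\Phi$ and $s_\Phi<q$ only kills the boundary terms $t^p$ and $t^q$; it does not remove this factor $2$.

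\paragraph{What the repaired argument proves.} After these repairs you get a correct proof of a weaker statement.
\begin{itemize}
\item Integrating in $s$ gives $\int\Phi(|Tf|/(2M))\le2\int\Phi(|f|)$.
\item Absorbing the $2$ via $\Phi(t/\lambda)\le\lambda^{-p}\Phi(t)$ for $\lambda\ge1$ gives $\|T\|_{L^\Phi}\le2^{1+1/p}M$ for the Luxemburg norm.
\item Your duality step, which is correct, gives $2^{1+1/q'}M$ for the Orlicz norm.
\end{itemize}
Both constants are dimension-free and $<4$, so they cover the qualitative use in Theorem~\ref{thm:Orlicz}. But both exceed the claimed bound for every $1<p<q<\infty$, since
\[
2^{1/(pq')+\min(1/p,1/q')}<2^{1+\min(1/p,1/q')}.
\]
For example, with $p,q$ near $2$ you get about $2^{3/2}$ against $2^{3/4}$. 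You also lose the special case $K\le2$ for $p,q$ on one side of $2$, which the paper uses for the $2\sqrt3$ bound in Theorem~\ref{thm:Orlicz}.

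\paragraph{The constant bookkeeping is not borne out.} You attribute $2^{1/(pq')}$ to the absorption step and $\min(1/p,1/q')$ to where the factor $2$ is placed. In the repaired scheme, absorption costs $2^{1/p}$ and the split costs $2$. Reaching the stated $K$ needs a sharper mechanism than a triangle-inequality split plus a factor-$2$ concave comparison. Your fallback of citing \cite{RieszThorinOrlicz} is exactly what the paper does, but it is not a proof of the statement.
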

Theorem~\ref{thm:Orlicz} is now a quick corollary
of Theorem~\ref{thm:Korn}
and the interpolation
theorem with a limiting argument.
As a quick application of this result,
we derive a sharpened form of the Korn inequality
obtained by Acerbi and
Mingione~\cite{AcerbiMingione}, namely
\begin{corollary}
Let $G_{\lambda,p}(t)=(1+\lambda t)^{p-2}t^2$
with $p\in (1,2]$. Then
\begin{equation*}
\|\mathcal A(u)\|_{L^{G_{\lambda,p}}}
\le 2\sqrt 3(p^*-1)
\|\mathcal E(u)\|_{L^{G_{\lambda,p}}}
\end{equation*}
\end{corollary}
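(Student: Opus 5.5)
The plan is to apply Theorem~\ref{thm:Orlicz} to the Young function $G_{\lambda,p}$. The corollary then reduces to computing the Simonenko indices of $G_{\lambda,p}$ and checking that both lie in $(1,2]$.

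First, $G_{\lambda,p}$ is a $C^1$ $N$-function for $\lambda\ge 0$. Since $G(t)=(1+\lambda t)^{p-2}t^2$, logarithmic differentiation gives
\begin{equation*}
\frac{tG'(t)}{G(t)}=2+(p-2)\frac{\lambda t}{1+\lambda t}.
\end{equation*}
As $t$ ranges over $(0,\infty)$, the quantity $\lambda t/(1+\lambda t)$ increases monotonically from $0$ to $1$. Because $p\le 2$, the ratio therefore decreases from $2$ to $p$. Hence $i_G=p$ and $s_G=2$ when $\lambda>0$, and $i_G=s_G=2$ when $\lambda=0$. In every case $1<i_G\le s_G<\infty$, so $\Delta_2$ and $\nabla_2$ hold and Theorem~\ref{thm:Orlicz} applies.

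Both indices lie in $(1,2]$, so the final clause of Theorem~\ref{thm:Orlicz} gives the constant $2\sqrt3\max(i_G^*-1,s_G^*-1)$. Here $s_G^*-1=1$ and $i_G^*-1=p'-1=p^*-1\ge1$, so the maximum is $p^*-1$. This yields $\|\mathcal A(u)\|_{L^{G_{\lambda,p}}}\le 2\sqrt3(p^*-1)\|\mathcal E(u)\|_{L^{G_{\lambda,p}}}$, with a constant uniform in $\lambda$ and $d$.

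The main obstacle is the boundary cases, since the interpolation theorem quoted from~\cite{RieszThorinOrlicz} requires the strict inequalities $p<i_\Phi<s_\Phi<q$, whereas here the indices are exactly $p$ and $2$. I would handle this through the limiting argument already mentioned for Theorem~\ref{thm:Orlicz}. One interpolates between $L^{p-\varepsilon}$ and $L^{2+\varepsilon}$ using the bounds of Theorem~\ref{thm:Korn}, with $\varepsilon$ small enough that $p-\varepsilon>1$. Both the constant $K(p-\varepsilon,2+\varepsilon)$ and the $L^r$ bounds $\sqrt3(r^*-1)$ depend continuously on the exponents, so letting $\varepsilon\to0$ recovers the stated bound. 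The constant $K$ needs separate care: $2^{1/(pq')+\min(1/p,1/q')}$ equals $2$ only when $1/p+1/q'-1\le$ the relevant quantity. I would check directly that for $q=2$ the exponent is $\frac1{2p}+\frac12\le1$ for $p\ge1$, which gives $K\le2$.
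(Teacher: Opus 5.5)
Your proposal is correct and follows the paper's route: the paper gets the corollary as a quick application of Theorem~\ref{thm:Orlicz}, since the Simonenko indices of $G_{\lambda,p}$ are $p$ and $2$, both in $(1,2]$, which gives the constant $2\sqrt3\max(p^*-1,1)=2\sqrt3(p^*-1)$. Your endpoint discussion is the same ``limiting argument'' the paper already builds into the proof of Theorem~\ref{thm:Orlicz}, so it is consistent with the paper but not needed once you invoke that theorem as stated.
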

\begin{remark}
It is possible to prove the estimate on Orlicz spaces
by directly mimicking the process used for $L^p$:
namely, provided one has a version of
Burkholder's inequality for
Orlicz spaces the same proof
carries through mutatis mutandis
and this allows one to obtain sharper bounds on $K$.
This will be done in follow-up work,
but let us note that some progress
in this direction (that is, extending
Burkholder's inequality
to Orlicz spaces) has already been made (under some
additional assumptions on $\Phi$)
in~\cite{OsekowskiOrlicz}.
The results of that paper, together with
Theorem~\ref{thm:Korn}, imply
e.g. that if we assume $\Phi$ is increasing, convex
and $C^2$ with $\Phi(0)=\Phi'(0)=0$,
$\lim_{x\to \infty}\Phi''(x)=0$ and 
\begin{equation*}
9(C-1)\Phi'(x)\le x\Phi''(x)< \Phi'(x)
\end{equation*}
for some $C>1$ then 
\begin{equation*}
C(\Phi,\R^d)\le \frac1{3(C-1)},
\end{equation*}
and a similar result is available
in the dual case $\Phi''(0)=0$.
In particular, this allows us to get
an improved estimate
in the previous corollary, as a quick
calculation shows that we can take
$C-1=(p-1)/9$, hence $C(G_{\lambda,p}, d)\le 3(p^*-1)$.
Note however that an improved estimate on $C(p,d)$
is likely to improve
further on this bound.
\end{remark}
The above remark can be made more precise.
Let $C(\Phi, \R^d)$
be the best constant such that,
for all $u\in C^\infty_c(\R^d, \R^d)$
we have
\begin{equation*}
\int_{\R^d}\Phi\Bigl(
C(\Phi,d) |P_{\mathrm{Sym}(d)}(\nabla u)|
\Bigr)
\mathrm{d}x
\ge \int_{\R^d} \Phi\Bigl(
|P_{\mathrm{Skew}(d)}(\nabla u)|
\Bigr)\mathrm{d}x.
\end{equation*}
Equivalently, this can be stated
in terms of the quasiconvexity
at $0$ of
\[
f_C(A):=\Phi(C(\Phi,d) |P_{\mathrm{Sym}(d)}(A)|)-
\Phi(|P_{\mathrm{Skew}(d)}(A)|).
\]
Then the arguments in Theorems~\ref{thm:envelope}
and~\ref{thm:Korn} easily adapt to prove 
\begin{align*}
\sqrt 3 C_{\mathrm{Diff}}(\Phi, H(d))
&\ge C^{\mathrm{qc}}(\Phi,d),\\ 
C_{\mathrm{Diff}}(\Phi, H(d))
&\ge C^{\mathrm{rc}}(\Phi,d)\ge 
C_{\mathrm{Sc}}(\Phi, \mathbb{R})
\end{align*}
where $C_{\mathrm{Diff}}(\Phi, H)$
is the best constant for differential
subordination bounds in $L^\Phi$
for $H$-valued martingales,
$C_{\mathrm{Sc}}(\Phi)$ the
best constant for $\pm1$ dyadic
martingale transform bounds and 
$H(d)$ is any $d(d+1)/2$-dimensional Hilbert space.
Since given an $H$-valued
continuous martingale $X_t$
there exists a continuous martingale $\tilde X_t$ taking values in $\R^2$
such that almost surely we 
have
\begin{align*}
    |X_t|&=|\tilde X_t|\\
    \langle X_t\rangle&=\langle \tilde X_t\rangle
\end{align*}
as proved in \cite{KallenbergSztencel},
it follows that if $\mathrm{dim}(H)\ge 2$,
$C_{\mathrm{Diff}}(\Phi, H)=
C_{\mathrm{Diff}}(\Phi, \R^2)$, so the previous bounds become dimension-free.
It is natural to expect that
$C_{\mathrm{Diff}}(\Phi, H)=
C_{\mathrm{Diff}}(\Phi, \R)=C_{\mathrm{Sc}}(\Phi, \R)$
for any
separable Hilbert space $H$, but this is not known at the moment.
However, these bounds are already quite
useful in and of themselves:
since $C_{\mathrm{Sc}}(\Phi, \R)<+\infty$
if and only if $\Phi$ satisfies the
$\Delta_2$ and $\nabla_2$ conditions, 
the above estimate implies a quick proof of a weak
form of~\cite{Breit2011}:
\begin{proposition}
Given a bounded open set $\Omega\subset \mathbb R^d$,
the inequality 
\begin{equation}\label{eq:KornOrlicz}
C_1+\int_{\Omega}\Phi\Bigl(
C_2 |P_{\mathrm{Sym}(d)}(\nabla u)|\Bigr)
\mathrm{d}x
\ge \int_{\Omega} \Phi\Bigl(
|P_{\mathrm{Skew}(d)}(\nabla u)|\Bigr)
\mathrm{d}x
\end{equation}
holds for some $C_1,C_2$
for all $u\in C^\infty_0(\Omega, \R^d)$ if and only if 
$\Phi$ satisfies both $\nabla_2(\infty)$
and $\Delta_2(\infty)$
and the same holds for the trace-free
version of Korn's inequality.
\end{proposition}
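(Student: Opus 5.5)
We prove the two implications separately. The sufficiency direction is handled by splitting $\Phi$ at large values, and the necessity direction by a blow-up argument that feeds the inequality into the rank-one convex / martingale framework of Section~\ref{sec:calcvar}.

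\emph{Sufficiency.} Assume $\Phi$ satisfies $\Delta_2(\infty)$ and $\nabla_2(\infty)$. Since $\Omega$ is bounded, only the behaviour of $\Phi$ at infinity matters, up to an additive constant. We first replace $\Phi$ by an $N$-function $\tilde\Phi$ that agrees with $\Phi$ on $[t_0,\infty)$ and is modified near $0$ (for instance $\tilde\Phi(t)=ct^2$ on $[0,t_0]$, glued convexly), chosen so that $\tilde\Phi$ satisfies the global $\Delta_2$ and $\nabla_2$ conditions. Then $|\Phi-\tilde\Phi|\le M$ on $[0,\infty)$, so the two modulars differ by at most $M|\Omega|$; this is absorbed into $C_1$.

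For $\tilde\Phi$ we use the modular bound $C^{\mathrm{qc}}(\tilde\Phi,d)\le\sqrt3\,C_{\mathrm{Diff}}(\tilde\Phi,H(d))$ stated just above. By the Kallenberg--Sztencel reduction, $C_{\mathrm{Diff}}(\tilde\Phi,H(d))=C_{\mathrm{Diff}}(\tilde\Phi,\R^2)$. This constant is finite: Burkholder-type modular estimates for differentially subordinate martingales hold under $\Delta_2$ and $\nabla_2$. One can obtain them, for example, by applying the same heat-martingale argument with the Orlicz interpolation of Theorem~\ref{thm:Orlicz} at the level of the Riesz-type operator $2P_{\mathrm{Skew}}R\otimes R$, and then passing from norms to modulars by the standard $\Delta_2$ comparison $\Phi(\lambda t)\le C\lambda^{s}\Phi(t)$. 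This yields \eqref{eq:KornOrlicz} with $C_2=\sqrt3\,C_{\mathrm{Diff}}$. The trace-free case is identical, using Lemma~\ref{lemma:linalg2} in place of Lemma~\ref{lemma:linalg1} (with $d=2$ handled through $T_{BA}$).

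\emph{Necessity.} Suppose \eqref{eq:KornOrlicz} holds on $\Omega$. We test it on $u_\lambda=\lambda\phi$ with $\lambda\to\infty$ and $\phi\in C_c^\infty$ supported in a ball inside $\Omega$. Dividing by $\Phi(\lambda)$ makes $C_1/\Phi(\lambda)$ vanish, so the inequality becomes a statement about the blow-up of $\Phi$ at infinity. Rank-one convexity at $0$ is a necessary condition for quasiconvexity at $0$, and laminates can be realised by Lipschitz maps supported in a small ball up to an arbitrarily small error. Hence we obtain, asymptotically at large scales, the rank-one convex inequality for $A\mapsto\Phi(C_2|P_{\mathrm{Sym}}A|)-\Phi(|P_{\mathrm{Skew}}A|)$ at $0$.

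Through the laminate--martingale correspondence, this forces a modular Burkholder inequality for dyadic $\pm1$ transforms. Following the proof of Theorem~\ref{thm:envelope}, we use martingales of the form $f_n(e_1\otimes e_d+e_d\otimes e_1)+g_n(e_1\otimes e_d-e_d\otimes e_1)$, scaled so that all values are large, so that $\Phi$ is only evaluated where it coincides with its behaviour at infinity. The result is the inequality $C_{\mathrm{Sc}}(\Phi,\R)<\infty$ "at infinity". We then invoke the known characterisation: finiteness of $\pm1$ transform modular bounds is equivalent to $\Delta_2$ and $\nabla_2$, and here we need the version localised at infinity.

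\emph{Main obstacle.} The hardest step is the localisation at infinity in the necessity direction. The classical counterexamples to $\Delta_2$ and $\nabla_2$ for martingale transforms use martingales that spread across all scales, so we must check that they can be taken with values bounded away from $0$. We also need the additive constant $C_1$ to be harmless, which should follow from choosing the initial value $Y_0$ large and using homogeneity-like growth. We would first try to build these martingales explicitly from sequences $t_k\to\infty$ that witness the failure of $\Delta_2(\infty)$ or $\nabla_2(\infty)$. We would then check that the resulting laminate has mass concentrated where $\Phi(t_k)$ dominates $C_1/|\Omega|$.
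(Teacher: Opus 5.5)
Your architecture is the paper's. For sufficiency you modify $\Phi$ near $0$, and your explicit bookkeeping of the error $M|\Omega|$ is a useful addition. For necessity you pass to $f_{C_2}^{\mathrm{rc}}(0)>-\infty$ and test on the laminates generated by $f_n(e_1\otimes e_d+e_d\otimes e_1)+g_n(e_1\otimes e_d-e_d\otimes e_1)$; since the first matrix is trace-free, these also serve the trace-free case. There are two gaps, one in each direction.

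\textbf{Necessity.} You leave the step you call the main obstacle unresolved, and it is much simpler than the construction you sketch. Restricting to $u$ supported in a ball $B\subset\Omega$ gives $f_{C_2}^{\mathrm{qc}}(0)\ge -C_1/|B|$. Hence $f_{C_2}^{\mathrm{rc}}(0)=:-c_0>-\infty$, and the normalisation by $\Phi(\lambda)$ plays no role. This yields $\mathbb E\Phi(\sqrt2|g_n|)\le c_0+\mathbb E\Phi(\sqrt2C_2|f_n|)$ for every dyadic $\pm1$ transform $g$ of $f$ on $[0,1]$ with $f_0=g_0=0$. The missing idea is that the transform is linear. Apply the inequality to $(f,g)/\lambda$ with $\lambda=\|\sqrt2C_2f_n\|_{L^\Phi[0,1]}$; convexity and $\Phi(0)=0$ then give $\|g_n\|_{L^\Phi[0,1]}\le(1+c_0)C_2\|f_n\|_{L^\Phi[0,1]}$. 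So $C_1$ simply disappears, and the Haar system is unconditional in the rearrangement-invariant space $L^\Phi[0,1]$. Its Boyd indices only see $\Phi$ near infinity, so \cite[Theorem 2.c.6]{Lindenstrauss1996} gives $\Delta_2(\infty)$ and $\nabla_2(\infty)$ directly. This is how the paper concludes, and no martingales with values bounded away from $0$ are needed. The paper also first rules out Young functions taking the value $+\infty$ (e.g.\ $\Phi_\infty$) using the radial maps of Proposition~\ref{prop:lowerbound2}, since the envelope argument needs $f_{C_2}$ to be real-valued; you should do the same.

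\textbf{Sufficiency.} The derivation you offer for the modular estimate fails. Suppose $\|\mathcal A(u)\|_{L^{\tilde\Phi}}\le K\|\mathcal E(u)\|_{L^{\tilde\Phi}}=KN$. The comparison $\tilde\Phi(\lambda t)\le c\lambda^{s}\tilde\Phi(t)$ then only gives $\int\tilde\Phi(|\mathcal A(u)|)\le c(KN)^{s}$. Meanwhile $\int\tilde\Phi(C_2|\mathcal E(u)|)$ is in general only bounded below by $(C_2N)^{i}$, where $i\le s$ is the lower index. When $i<s$ this does not close as $N\to\infty$ for any $C_1,C_2$, so the $\Delta_2$ comparison does not turn norm bounds into modular ones. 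Moreover, interpolating the single operator $2P_{\mathrm{Skew}}R\otimes R$ cannot establish $C_{\mathrm{Diff}}(\tilde\Phi,\R^2)<\infty$, which is a statement about all differentially subordinate pairs. You need a genuinely modular input; any of the following works.
\begin{itemize}
\item The modular Marcinkiewicz theorem, applied directly to $T=2P_{\mathrm{Skew}}R\otimes R$ and bypassing $C_{\mathrm{Diff}}$: boundedness on $L^{p_0}$ and $L^{p_1}$ with $p_0<i_{\tilde\Phi}\le s_{\tilde\Phi}<p_1$ gives $\int\tilde\Phi(|Tf|)\le K\int\tilde\Phi(|f|)\le\int\tilde\Phi(K|f|)$ for $K\ge1$.
\item A direct proof of $C_{\mathrm{Diff}}<\infty$, combining the modular Burkholder--Davis--Gundy inequality for continuous-path martingales (which uses $\Delta_2$) with Doob's modular maximal inequality (which uses $\nabla_2$).
\item The rearrangement bound $(Tf)^*\lesssim Sf^*$ together with modular Hardy inequalities, as in Proposition~\ref{prop:KornOrliczcondition}.
\end{itemize}
The paper's own ``standard interpolation arguments'' are equally terse, but they have to be read in one of these modular senses.
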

\begin{proof}
We can without loss of generality assume $\Phi$
to be real-valued, for otherwise a smooth 
approximation of $u_1(x)$ defined as
in Proposition~\ref{prop:lowerbound2}
(and similar functions 
in higher dimensions)
proves the inequality cannot hold.
Let $f_C$ denote the integrand associated
to~\eqref{eq:KornOrlicz},
that is
\[
f_C(A)\mathrel{:}=
\Phi(C |P_{\mathrm{Sym}(d)}(A)|)
-\Phi(|P_{\mathrm{Skew}(d)}(A)|).
\]
Then a necessary condition for~\eqref{eq:KornOrlicz}
to hold is $f_C^{\mathrm{rc}}(0)>-\infty$.
By considering the martingale
$f_n (e_1\otimes e_d+e_d\otimes e_1)
+g_n (e_1\otimes e_d-e_d\otimes e_1)$,
this can only happen if
\begin{equation*}
\mathbb E(\Phi(C |f_n|))\ge \mathbb E(\Phi(|g_n|)).
\end{equation*}
This only happens if the martingale $\pm1$ transform
is bounded,
which by \cite[Theorem 2.c.6]{Lindenstrauss1996}
can only happen
if $\Phi$ satisfies both
$\Delta_2(\infty)$ and $\nabla_2(\infty)$.
For sufficiency,
standard interpolation arguments
such as the ones outlined previously in the section suffice,
since by modifying
$\Phi$ on a finite interval $[0,a]$ we can assume without loss
of generality that
it satisfies $\nabla_2$ and $\Delta_2$ globally.
\end{proof}
This technique can be pushed further: 
the same argument proves a slightly weaker form of one of
the main results of 
\cite{BreitCianchiDiening2017}:
\begin{proposition}\label{prop:KornOrliczcondition}
Given a bounded open set $\Omega\subset \mathbb R^d$,
the inequality
\begin{equation}
\label{eq:KornOrlicz2}
C_1+\int_{\Omega}\Phi\Bigl(
C_2 |P_{\mathrm{Sym}(d)}(\nabla u)|\Bigr)
\mathrm{d}x
\ge \int_{\Omega} \Psi\Bigl(
|P_{\mathrm{Skew}(d)}(\nabla u)|\Bigr)
\mathrm{d}x
\end{equation}
holds for some $C_1,C_2$ for all $u\in C^\infty_0(\Omega, \R^d)$
if and only if the Calderón operator $S=C+C^*$, where 
\begin{equation*}
Cf(t)\mathrel{:}=\frac1t\int_0^t f(s)\mathrm{d}s
\end{equation*}
and $C^*$ is its formal adjoint, is bounded between
$L^{\Phi}([0,1])$
and $L^{\Psi}([0,1])$. The same holds for Korn's
trace-free inequality.
\end{proposition}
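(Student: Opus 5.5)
Following the pattern of the preceding proposition, we treat the two implications separately: necessity through the rank-one convex envelope, sufficiency through the martingale representation of Theorem~\ref{thm:Korn}.

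\emph{Necessity.} Suppose \eqref{eq:KornOrlicz2} holds. By translation and dilation invariance we may assume $\Omega$ contains the unit ball. Since laminates with barycentre $0$ supported in a fixed bounded set can be approximated by gradients of maps in $C^\infty_0$ (via the usual staircase-laminate construction), the integrand
\[
f(A)\mathrel{:}=\Phi\bigl(C_2|P_{\mathrm{Sym}(d)}(A)|\bigr)-\Psi\bigl(|P_{\mathrm{Skew}(d)}(A)|\bigr)
\]
must satisfy $\langle f,\mu\rangle\ge -C_1/|\Omega|$ for every laminate $\mu$ with barycentre $0$. We test this with the martingale
\[
M_n=f_n(e_1\otimes e_d+e_d\otimes e_1)+g_n(e_1\otimes e_d-e_d\otimes e_1),
\]
where $dg_n=\pm df_n$. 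This martingale has rank-one increments $df_n\cdot 2e_1\otimes e_d$ or $df_n\cdot 2e_d\otimes e_1$. We obtain
\[
\mathbb E\,\Phi\bigl(\sqrt2 C_2|f_n|\bigr)+C_1'\ge \mathbb E\,\Psi\bigl(\sqrt2|g_n|\bigr)
\]
uniformly over all $\pm1$ transforms of dyadic martingales starting at $0$. The key step is then to convert boundedness of martingale transforms from $L^\Phi$ to $L^\Psi$, in this modular and inhomogeneous form on a probability space, into boundedness of $S$ from $L^\Phi([0,1])$ to $L^\Psi([0,1])$. For this we take the extremal examples for $S$: decreasing rearrangements $f^*$, for which $Sf^*(t)\approx f^*(t)+\int_t^1 f^*(s)\,\mathrm ds/s$. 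We realise these as martingale transforms by the standard ``exit-time'' dyadic construction on $[0,1]$, in which the martingale walks to the level $f^*(2^{-k})$ on $[2^{-k-1},2^{-k}]$, so that the transform accumulates the tail sums $\sum_{j\ge k}$. This is the step I expect to be the main obstacle. The difficulty is that the transform bound is only modular with additive constant $C_1$, so one has to rescale and use the standard characterisation (Boyd / Cianchi style) of $S\colon L^\Phi\to L^\Psi$ through the Hardy-type inequalities for $C$ and $C^*$ separately. I would try first to reproduce the argument of \cite{BreitCianchiDiening2017} at the level of martingales, treating $C$ (transforms from large to small levels) and $C^*$ (the dual direction, obtained by swapping roles of $f_n,g_n$) separately.

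\emph{Sufficiency.} Assume $S\colon L^\Phi\to L^\Psi$ is bounded. By the Proposition on the Riesz representation we write $\mathcal A(u)$ as $2P_{\mathrm{Skew}}(R\otimes R)\mathcal E(u)$. The operator $R\otimes R$ is a Calderón--Zygmund operator, and it is bounded on every $L^p$ with dimension-free norm by Theorem~\ref{thm:Korn}. Hence, by the classical rearrangement estimate $(Tg)^{*}\lesssim S(g^*)$ for such operators (Bennett--Sharpley), applied componentwise, we get $(|\mathcal A(u)|)^*\lesssim S(|\mathcal E(u)|^*)$ on $[0,|\Omega|]$, with $\mathcal E(u)$ extended by zero. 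Boundedness of $S$ together with rearrangement invariance of the modulars then yields \eqref{eq:KornOrlicz2}. The constant $C_1$ absorbs the passage from norm to modular on a set of finite measure, and $C_2$ absorbs the implied constants. Finally, the trace-free case follows identically, using \eqref{eq:RieszKornTrace} in the sufficiency part and the same martingale $M_n$ in the necessity part, since $M_n$ is trace-free.
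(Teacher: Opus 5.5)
Your overall architecture matches the paper's. For necessity you show $f_C^{\mathrm{rc}}(0)>-\infty$ and test it on the martingale $f_n(e_1\otimes e_d+e_d\otimes e_1)+g_n(e_1\otimes e_d-e_d\otimes e_1)$, which has rank-one increments. For sufficiency you use Calderón's rearrangement estimate $((R\otimes R)g)^*\lesssim S g^*$ together with rearrangement invariance of the modulars; this half is essentially identical to the paper's.

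The gap is in necessity. Reducing to ``$\pm1$ dyadic martingale transforms are bounded from $L^\Phi$ to $L^\Psi$'' is fine. You then still need that this forces $S\colon L^\Phi([0,1])\to L^\Psi([0,1])$ to be bounded. You leave exactly this step as the ``main obstacle'' and give only a tentative plan, yet that implication is the whole content of the direction. The paper does not reprove it: it invokes \cite{Astashkin2023}, which says that boundedness of martingale transforms between the two spaces is equivalent to boundedness of $S$. As written, your necessity argument stops before the conclusion. Citing that theorem, or proving it completely, is what is missing.

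Several parts of your plan would also need correcting.
\begin{itemize}
\item The additive constant is not a real obstruction. Apply your inequality to the pair $(f,g)/(\sqrt2C_2)$ with $\|f\|_{L^\Phi}\le 1$ to get $\mathbb E\,\Psi(|g|/C_2)\le 1+C_1'$. By convexity and $\Psi(0)=0$ you have $\Psi(x/K)\le \Psi(x)/K$ for $K\ge1$. This gives the plain norm bound $\|g\|_{L^\Psi}\le C_2(1+C_1')\|f\|_{L^\Phi}$ on the probability space. So no Boyd/Cianchi-type Hardy characterisation is needed before quoting the transform-to-$S$ equivalence.
\item Swapping $f_n$ and $g_n$ cannot produce the $C$ half. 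Since $df_n=\varepsilon_n\,dg_n$, the class of transform pairs is invariant under the swap, so you just recover the same family of inequalities. The right mechanism is duality. Transforms with predictable signs are self-adjoint, so they are also bounded from $L^{\widetilde\Psi}$ to $L^{\widetilde\Phi}$. Since $C$ is the adjoint of $C^*$, a $C^*$-type construction for the complementary pair $(\widetilde\Psi,\widetilde\Phi)$ then yields the $C$ half.
\item You skip the paper's preliminary reduction to a finite-valued $\Psi$, which uses the radial example of Proposition~\ref{prop:lowerbound2}. Without it the integrand can take the value $-\infty$, and the envelope/laminate argument is not meaningful.
\end{itemize}
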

\begin{proof}
We can without loss of generality assume $\Psi$
to be real-valued, for otherwise a smooth 
approximation of $u_1(x)$ defined as
in Proposition~\ref{prop:lowerbound2}
(and similar functions 
in higher dimensions)
proves the inequality cannot hold.

Let $f_C$ denote the integrand associated
to~\eqref{eq:KornOrlicz2},
that is
\[
f_C(A)\mathrel{:}=
\Phi(C |P_{\mathrm{Sym}(d)}(A)|)
-\Psi(|P_{\mathrm{Skew}(d)}(A)|).
\]
Then a necessary condition for~\eqref{eq:KornOrlicz2} to hold is
$f_C^{\mathrm{rc}}(0)>-\infty$.
By considering the martingale
$f_n (e_1\otimes e_d+e_d\otimes e_1)+
g_n (e_1\otimes e_d-e_d\otimes e_1)$,
this can only happen if
\begin{equation*}
\mathbb E(\Phi(C |f_n|))\ge \mathbb E(\Psi(|g_n|)).
\end{equation*}
This only happens if the martingale $\pm1$ transform is bounded, 
which by~\cite{Astashkin2023} is equivalent to the
boundedness of $S$. The sufficiency of this condition follows
from standard interpolation arguments:
since $R\otimes R$ is self-adjoint
and bounded on $L^2$, it follows that
(see \cite[Appendix]{Calderon1966}) 
\[(R\otimes R f)^*(x)\lesssim S f^*(x),\]
where $f^*$ is the decreasing right-continuous
rearrangement of $f$.
The result follows, since rearrangement
preserves Orlicz modulars and if $S$
is bounded between $L^{\Phi}$ and $L^{\Psi}$
it automatically satisfies the required modular inequality.
\end{proof}
\begin{remark}
The above argument can be extended
to prove equivalence with the norm
inequality, and to more general function spaces and
differential operators, though
this requires some new arguments.
This will be explored in future work, 
in part jointly with Bernd Kirchheim, Jan 
Kristensen and Wen Qi Zhang.
\end{remark}
\subsection{Korn in weighted Sobolev spaces}
Recently, Domelevo, Petermichl and
Skreb (\cite{Petermichl}) have 
extended Burkholder's differential subordination results
to weighted $L^p$ spaces.
This result, combined
with the space-time martingale
representation of $R\otimes R$,
yields a weighted Korn inequality.\par 
It is well known that, given a Calderón-Zygmund operator
and a Muckenhoupt $A_p$ weight $w$, it acts continuously
on weighted $L^p(w)$ spaces,
with the following bound:
\begin{equation*}
    \|T\|_{L^p(w)\to L^p(w)}
    \le C_{d,p}(T)[w]_{A_p}^{\max(1,\frac1{p-1})}.
\end{equation*}
This result, known as the $A_p$ conjecture,
was proved by Hytönen in~\cite{Hytonen2}. In our context, it is 
natural to conjecture $\|R\otimes R\|_{L^p(w)\to L^p(w)}
\lesssim_d (p^*-1)[w]_{A_p}^{\max(1,1/(p-1))}
$ and perhaps even that the inequality
should hold dimension-free.
We will prove this is indeed the case as a straightforward
application of the weighted $L^p$ estimates
proved in~\cite{Petermichl},
provided one replaces $[\cdot]_{A_p}$ by
an equivalent characteristic.\par
We define the following notion of weights,
which, given our
reliance on heat methods,
is much better suited to our approach
than Muckenhoupt's original definition:
\begin{definition}[Heat weight]
Given a function $f\in L^1_{\mathrm{loc}}(\R^d), $ 
we say $f\in  A_p^{\mathrm{Heat}}$ (the $p$-class of heat 
weights) if $f>0$ almost everywhere and 
\begin{equation*}
    [f]_{A^{\text{\upshape Heat}}_p}
    \mathrel{:}=
    \sup_{\R^{d+1}_+}|
    U f(x,t) U g(x,t)^{p-1}|<\infty,
\end{equation*}
where $U f$ denotes the heat extension of $f$
and $U g$ that of $g(x)\mathrel{:}=f(x)^{-1/(p-
1)}$.
\end{definition}
Considering this class of weights does not entail any loss of 
generality, thanks to the following well-known result
\begin{proposition}
    [{\cite[Theorem 3.4]{Fefferman}, 
    \cite[Theorem 3.2]
    {Petermichl2002HEATINGQUASIREGULAR}}]
For $d\ge 1$, $p\in (1,\infty)$
\begin{equation*}
A^{\text{\upshape Heat}}_p= A_p
\end{equation*}
\end{proposition}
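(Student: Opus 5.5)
The plan is to prove the two inclusions separately, each with a quantitative comparison of the characteristics. Throughout, $h_t(x,y)=(2\pi t)^{-d/2}\exp(-|x-y|^2/2t)$ is the heat kernel, so $Uf(x,t)=\int h_t(x,y)f(y)\diff y$. We write $\sigma=w^{-1/(p-1)}$, and $\langle f\rangle_B$ for the average of $f$ over a ball $B$. The constants may depend on $d$ and $p$; the proposition only asserts equality of the classes.

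\emph{Step 1: $A_p^{\mathrm{Heat}}\subset A_p$.} Fix a ball $B=B(x,r)$ and set $t=r^2$. For $y\in B$ we have $h_t(x,y)\ge (2\pi t)^{-d/2}e^{-1/2}$. Since $|B|$ is a dimensional multiple of $t^{d/2}$, this gives the pointwise bound
\[
h_t(x,\cdot)\ge c_d\,|B|^{-1}\mathds{1}_B .
\]
By positivity of $w$ and $\sigma$ it follows that $Uw(x,t)\ge c_d\langle w\rangle_B$ and $U\sigma(x,t)\ge c_d\langle \sigma\rangle_B$. Hence
\[
\langle w\rangle_B\langle\sigma\rangle_B^{p-1}\le c_d^{-p}[w]_{A_p^{\mathrm{Heat}}},
\]
and taking the supremum over all balls gives the inclusion.

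\emph{Step 2: $A_p\subset A_p^{\mathrm{Heat}}$.} Fix $(x,t)$ and let $B_k=B(x,2^k\sqrt t)$ for $k\ge 0$. We split $\R^d$ into $B_0$ and the annuli $B_k\setminus B_{k-1}$, and bound the Gaussian on each piece by its maximum there. This yields
\[
h_t(x,\cdot)\le \sum_{k\ge0} a_k|B_k|^{-1}\mathds{1}_{B_k},\qquad a_k\lesssim_d 2^{kd}e^{-4^{k-1}/2}.
\]
Consequently $Uw(x,t)\le\sum_k a_k\langle w\rangle_{B_k}$, and the same bound holds for $\sigma$. To handle the power $p-1$ we split into two cases.
\begin{itemize}
\item If $p\ge2$, Jensen's inequality (after normalising by $\sum_k a_k$) gives $\bigl(\sum_k a_kx_k\bigr)^{p-1}\le A^{p-2}\sum_k a_kx_k^{p-1}$, where $A=\sum_k a_k$.
\item If $p<2$, subadditivity of $s\mapsto s^{p-1}$ gives $\bigl(\sum_k a_kx_k\bigr)^{p-1}\le\sum_k a_k^{p-1}x_k^{p-1}$.
\end{itemize}
In either case $Uw\,(U\sigma)^{p-1}$ is bounded by a double sum of cross terms $\langle w\rangle_{B_j}\langle\sigma\rangle_{B_k}^{p-1}$, each with a weight decaying like a Gaussian in both $j$ and $k$.

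To bound a cross term we use only nestedness and positivity. Set $m=\max(j,k)$. Then $\langle w\rangle_{B_j}\le 2^{d(m-j)}\langle w\rangle_{B_m}$, and similarly $\langle\sigma\rangle_{B_k}\le 2^{d(m-k)}\langle\sigma\rangle_{B_m}$. Therefore
\[
\langle w\rangle_{B_j}\langle\sigma\rangle_{B_k}^{p-1}\le 2^{d(|j-k|)\max(1,p-1)}[w]_{A_p}.
\]
The Gaussian factor $e^{-c4^{j}}e^{-c4^{k}}$ in the weights dominates this exponential growth. So the double series converges to a constant $C_{d,p}$, and we obtain $[w]_{A_p^{\mathrm{Heat}}}\le C_{d,p}[w]_{A_p}$.

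\emph{Main obstacle.} The main point requiring care is Step 2. One must organise the sums so that the nonlinearity $(\cdot)^{p-1}$ and the mismatch between the radii for $w$ and for $\sigma$ do not destroy summability. The approach I would take is the one above: bound everything by the average on the larger of the two balls, at an exponential cost, and let the super-exponential Gaussian decay absorb that cost. A secondary point is that $Uw$ and $U\sigma$ must be finite, which requires $w,\sigma\in L^1_{\mathrm{loc}}$ with subgaussian growth. For $A_p$ weights this follows from the $A_p$ condition itself, since nested ball averages grow at most polynomially in the radius. In fact the same estimate run on each $B_k$ shows this directly.
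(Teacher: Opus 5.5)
Your proof is correct, and it takes a different route from the paper, which gives no argument of its own. The paper cites Fefferman and Petermichl--Volberg. The remark that follows records only the comparison $[w]_{A_p}\lesssim[w]_{A_p^{\mathrm{Heat}}}\lesssim D(w)[w]_{A_p}$, with the doubling constant $D(w)$ of $w\,\mathrm{d}x$ on the right. It also notes that linear equivalence of the characteristics does not follow from equality of the classes.

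Your Step 1 is the standard easy direction. The substantive difference is in Step 2. A natural approach would transport each annular average back to the smallest ball, which is where a doubling constant enters. Instead, you enlarge the smaller of the two balls in each cross term $\langle w\rangle_{B_j}\langle\sigma\rangle_{B_k}^{p-1}$ to $B_{\max(j,k)}$. This uses only the inclusion $B_j\subset B_{\max(j,k)}$ and positivity. It costs the volume factor $2^{d|j-k|\max(1,p-1)}$, which the super-exponentially decaying Gaussian coefficients absorb in both the convex case ($p\ge 2$) and the subadditive case ($p<2$).

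What this buys is $[w]_{A_p^{\mathrm{Heat}}}\le C_{d,p}[w]_{A_p}$. So the two characteristics are in fact linearly equivalent, with constants depending only on $d$ and $p$. By contrast, the paper's stated bound combined with $D(w)\lesssim 2^{dp}[w]_{A_p}$ can be quadratic in $[w]_{A_p}$. Consequently, your comparison transfers the weighted Korn estimate to the usual Muckenhoupt characteristic with the same exponent $\max(1,1/(p-1))$, at the price of a dimension-dependent constant.

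Your treatment of finiteness is also adequate. Expanding the product of the two nonnegative series by Tonelli, your bound shows $Uw\,(U\sigma)^{p-1}<\infty$. Since both heat extensions are strictly positive, each must then be finite.
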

\begin{remark}
It is important to note that,
while the two spaces $A_p$
and $A^{\text{\upshape{Heat}}}_p$ coincide, this 
does not imply that the Muckenhoupt and the heat
characteristic are linearly equivalent;
Instead, what we obtain is
\begin{equation*}
[w]_{A_p}
\lesssim
[w]_{A^{\text{\upshape Heat}}_p}
\lesssim D(w) [w]_{A_p},
\end{equation*}
where $D(w)$ is the doubling constant
associated to the measure $w\diff x$ (which
can be bounded from above by
$D(w)\lesssim 2^{dp}[w]_{A_p}$).
We also remark that the previous
result fails for the Poisson kernel because 
of its slow decay, see section 3
of~\cite{Fefferman} for more details.
This is, in particular, the reason
why we choose to use the space-time
Brownian motion representation of $R_iR_j$
instead of the background-diffusion one.
\end{remark}
We will now use a weighted version of
Burkholder's method
(\cite[corollary 1]{Petermichl}) to obtain
\begin{theorem}
Given a weight $w\in A_{p}^{\text{\upshape Heat}}$
we have
\begin{equation*}
C(p,w\diff x, \mathbb R^d)
\le K (p^*-1) 
[w]_{A_{p}^{\text{\upshape Heat}}}
^{\max(1,\frac{1}{p-1})},
\end{equation*}
where $K$ is an absolute constant and $C(p,w\diff x, \R^d)$ is the best constant
for the associated Korn inequality.
\end{theorem}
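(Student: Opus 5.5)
The plan is to repeat the proof of the upper bound in Theorem~\ref{thm:Korn}, with Burkholder's theorem (Theorem~\ref{thm:burkholder}) replaced by the weighted differential subordination estimate, Theorem~\ref{thm:burkweight}. Fix $T>0$, $u\in C^\infty_c(\R^d,\R^d)$, set $f=\mathcal E(u)$, and let $Y^T_t=(\mathcal B_t,T-t)$ be the space-time Brownian motion started from Lebesgue measure. As before, $M^T_t=Uf(Y^T_t)$ and
\begin{equation*}
(Z^T_t)_{i,j}=\sum_k\int_0^t\bigl(\partial_iUf_{j,k}-\partial_jUf_{i,k}\bigr)(Y^T_s)\,\mathrm d\mathcal B^k_s .
\end{equation*}
Lemma~\ref{lemma:linalg1} gives $\mathrm d\langle Z^T\rangle\le 3\,\mathrm d\langle M^T\rangle$. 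Hence $X:=Z^T/\sqrt3$ is differentially subordinate to $M^T-M^T_0$, and both vanish at time $0$ after subtracting the initial value, which tends to $0$ in $L^p$ as $T\to\infty$.

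Next I attach to the filtration the weight martingale $w_t:=Uw(Y^T_t)=P_{T-t}w(\mathcal B_t)$. By the same Itô argument it is a positive martingale whose terminal value is $w(\mathcal B_T)$. For the martingale characteristic, note that by the Markov property
\begin{equation*}
\mathbb E\bigl(w(\mathcal B_T)^{-\frac1{p-1}}\mid\mathcal F_t\bigr)=Ug(Y^T_t),\qquad g=w^{-\frac1{p-1}} .
\end{equation*}
Therefore $w_t\,\mathbb E(w^{-1/(p-1)}\mid\mathcal F_t)^{p-1}=Uw\,(Ug)^{p-1}(Y^T_t)$, and
\begin{equation*}
[w]_{A_p^{\mathrm{mart}}}\le[w]_{A_p^{\mathrm{Heat}}}
\end{equation*}
uniformly in $T$. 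This is exactly why the heat characteristic is used. A localisation may be needed to meet the integrability requirement $\sup_t\|w_t\|<\infty$ in Definition~\ref{def:weightmart}. I will handle this by truncating $w$, noting that truncation does not increase the heat characteristic, or by working on $\sigma$-finite spaces.

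Theorem~\ref{thm:burkweight} then gives
\begin{equation*}
\sup_t\mathbb E\bigl(|Z^T_t|^pw_t\bigr)^{1/p}\lesssim\sqrt3\,(p^*-1)[w]_{A_p^{\mathrm{Heat}}}^{\max(1,\frac1{p-1})}\sup_t\mathbb E\bigl(|M^T_t|^pw_t\bigr)^{1/p}.
\end{equation*}
For the right-hand side, at each time $t$ the optional/tower property gives $\mathbb E(|M_t|^pw_t)=\mathbb E(|M_t|^pw(\mathcal B_T))$. Conditional Jensen applied to the convex function $|\cdot|^p$ bounds this by $\mathbb E(|f(\mathcal B_T)|^pw(\mathcal B_T))=\|f\|^p_{L^p(w)}$, since $\mathcal B_T$ is Lebesgue-distributed. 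On the left, I use the representation $\mathcal A(u)=\lim_T\mathbb E(Z^T_T\mid\mathcal B_T=x)$ together with the equality $\mathbb E(|Z^T_T|^pw(\mathcal B_T))\ge\|\mathbb E(Z^T_T\mid\mathcal B_T)\|^p_{L^p(w)}$, which again follows from conditional Jensen because $w(\mathcal B_T)$ is $\sigma(\mathcal B_T)$-measurable.

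The main obstacle is the limit $T\to\infty$ in the weighted norm. The convergence $(R\otimes R)_T f\to R\otimes R f$ is known in unweighted $L^p$, so I only obtain pointwise a.e. convergence along a subsequence. I will invoke Fatou's lemma, which only requires a.e. convergence, to pass the bound to the limit. Finally, density of $C^\infty_c$ in $\dot W^{1,p}(w\,\mathrm dx)$ for $A_p$ weights extends the estimate to all $u$.
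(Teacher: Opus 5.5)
Your architecture matches the paper's: heat-extension martingales, differential subordination from pointwise linear algebra, the weight martingale $Uw(Y_t)$ with $[\,\cdot\,]_{A_p^{\mathrm{mart}}}\le[w]_{A_p^{\mathrm{Heat}}}$ via the Markov property, and conditional Jensen with respect to $\sigma(\mathcal B_T)$ on the left-hand side. Using the skew-projected martingale and Lemma~\ref{lemma:linalg1} instead of the paper's $R\otimes R$ martingale plus \eqref{eq:RieszKorn1} only changes the absolute constant. Your Fatou argument for $T\to\infty$ is more careful than the paper's. Two steps, however, fail as written.

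\emph{Localisation.} Truncating the values of $w$ cannot give the integrability that Definition~\ref{def:weightmart} demands of a martingale weight. With Lebesgue initial distribution, $\mathbb E(w_t)=\int_{\R^d}w\,\mathrm dx$, and this is infinite for every $A_p$ weight; already $w\equiv1$ fails, and no truncation changes it. The obstruction is the infinite mass of the starting measure, not the size of $w$. A weighted estimate for such non-integrable weights on $\sigma$-finite spaces is not what Theorem~\ref{thm:burkholderweighted} provides. The missing device, used in the paper, is to localise the Brownian motion instead of the weight: start it from the uniform distribution on $[-n,n]^d$. This gives a probability space, and conditional expectations are still heat extensions. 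Hence $[W^{T,n}]_{A_p^{\mathrm{mart}}}\le[w]_{A_p^{\mathrm{Heat}}}$ uniformly in $T,n$, and one lets $n\to\infty$ at the end.

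\emph{The right-hand side.} Conditional Jensen gives $|M_t|^p\le\mathbb E(|M_T|^p\mid\mathcal F_t)$. But $w(\mathcal B_T)$ is not $\mathcal F_t$-measurable, so you only obtain $\mathbb E(|M_t|^pw_T)\le\mathbb E(|M_T|^pw_t)$, not $\mathbb E(|M_T|^pw_T)$. This is exactly the measurability that makes your left-hand-side argument work. The claimed bound $\sup_t\mathbb E(|M_t|^pw_t)\le\|f\|^p_{L^p(w)}$ is in fact false. Take $d=1$, $p=2$, $w=\delta$ on $[0,\varepsilon]$ and $w=1$ elsewhere, and $f=\mathds{1}_{[0,\varepsilon]}$. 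At $T-t=\varepsilon^2$ one has $\int|P_{\varepsilon^2}f|^2P_{\varepsilon^2}w\gtrsim\varepsilon$, while $\|f\|^2_{L^2(w)}=\delta\varepsilon$.

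H\"older and the $A_p$ condition only give $\mathbb E(|M_t|^pw_t)\le[w]_{A_p^{\mathrm{mart}}}\,\mathbb E(|M_T|^pw_T)$. With the $\sup_t$-norms of Theorem~\ref{thm:burkholderweighted}, this costs an extra factor $[w]^{1/p}$ in the final estimate. To keep the stated exponent you need the weighted subordination estimate with the terminal value on the right-hand side:
\[
\mathbb E(|X_T|^pw_T)^{1/p}\lesssim(p^*-1)[w]_{A_p^{\mathrm{mart}}}^{\max(1,\frac1{p-1})}\,\mathbb E(|Y_T|^pw_T)^{1/p}.
\]
Then $\mathbb E(|M_T|^pw_T)$ is, up to the localisation, exactly $\|f\|^p_{L^p(w)}$. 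The paper's proof also takes the bound on $\sup_t\mathbb E(|\mathcal Y^{T,n}_t|^pW^{T,n}_t)$ for granted without comment, so this point needs an argument in either version.
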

Let us now recall the weighted
martingale estimate that
underlies the proof of the $A_p$-version
of our estimates:
\begin{theorem}[{\cite[Corollary 1]{Petermichl}}]
\label{thm:burkholderweighted}
Let $(X_t),(Y_t)$ be Hilbert-space-valued martingales and
assume that $X \preccurlyeq Y$.
Then, given a martingale $w\in A_p$
with $p\in (1,\infty)$ we have
\begin{equation*}
\vvvert{X}_{L^p(w)}\lesssim (p^*-1)
[w]_{A_p^{\text{\upshape mart}}}^{\max\left(\frac{1}{p-1
},1\right)} \vvvert{Y}_{L^p(w)}.
\end{equation*}
\end{theorem}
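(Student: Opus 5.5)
We would prove Theorem~\ref{thm:burkholderweighted} with a weighted version of Burkholder's method, the one recalled in Section~\ref{sec:prereq}. The unweighted function $\mathcal G_p$ is replaced by a Bellman-type function of four variables $B(x,y,w,v)$, where $x,y\in H$ and $w,v>0$. The auxiliary martingales are
\[
w_t=\mathbb E(w\mid\mathcal F_t),\qquad v_t=\mathbb E\big(w^{-1/(p-1)}\mid\mathcal F_t\big),
\]
which live in the domain
\[
\Omega_Q=\{(w,v)\colon 1\le wv^{p-1}\le Q\},\qquad Q=[w]_{A_p^{\mathrm{mart}}}.
\]
By Kallenberg--Sztencel~\cite{KallenbergSztencel} we may assume that $H$ is finite dimensional, in fact $\R^2$, with $X_0=Y_0=0$. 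By a localisation argument we may also assume that all processes are bounded and continuous. The goal is a function $B$ on $H^2\times\Omega_Q$ with three properties:
\begin{enumerate}
\item the majorisation $B(x,y,w,v)\ge |x|^pw-C_p(Q)^p|y|^pw$, with $C_p(Q)\approx (p^*-1)Q^{\max(1,1/(p-1))}$;
\item $B(0,0,w,v)\le 0$;
\item a concavity condition: for all increments $(h,k,\alpha,\beta)$ with $|h|\le|k|$ and directions tangent to $\Omega_Q$, the second-order form of $B$ in those directions is $\le 0$.
\end{enumerate}

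Given such a $B$, we apply It\^o's formula to $B(X_t,Y_t,w_t,v_t)$. The $\mathrm d\langle\cdot\rangle$ terms are nonpositive because $\mathrm d\langle X\rangle_t\le\mathrm d\langle Y\rangle_t$, and the covariations with $(w_t,v_t)$ are controlled by Cauchy--Schwarz. Hence $B(X_t,Y_t,w_t,v_t)$ is a supermartingale, so $\mathbb E B(X_t,Y_t,w_t,v_t)\le B(0,0,w_0,v_0)\le 0$. Combined with majorisation and the identity $\mathbb E(|X_t|^pw)=\mathbb E(|X_t|^pw_t)$, this gives the claim for each $t$; taking $\sup_t$ finishes the proof.

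To build $B$ we would first treat $p=2$ and then transfer to general $p$. For $p=2$ we try
\[
B=\Phi(w,v)\,|y|^2-\frac{|x|^2}{w}\cdot(\text{correction}),
\]
following the Petermichl--Volberg heat/Bellman functions for the Beurling--Ahlfors transform. The key input is the classical inequality that $(w,v)\mapsto$ suitably chosen functions such as $w^{1/2}v^{1/2}$ or $-\log(wv)$ have Hessians bounded away from zero on $\Omega_Q$, by a constant like $1/Q$. This extra concavity in $(w,v)$ absorbs the mixed terms $\mathrm d\langle X,w\rangle$ that appear because the weight moves. For general $p$ we would either construct $B$ directly, by multiplying a modified $\mathcal G_p$ by weight factors $w^{a}v^{b}$ chosen to produce the exponent $\max(1,1/(p-1))$, or use a martingale analogue of Rubio de Francia extrapolation. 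Duality ($w\mapsto w^{-1/(p-1)}\in A_{p'}$, with $[w^{1-p'}]_{A_{p'}}=[w]_{A_p}^{1/(p-1)}$) reduces the range $p<2$ to $p>2$ and explains the $\max$ in the exponent.

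The main obstacle is the concavity verification. The cross terms between $(h,k)$ and the weight increments $(\alpha,\beta)$ must be dominated by the negative definiteness coming from the $(w,v)$ variables and from the subordination gap $|k|^2-|h|^2$, uniformly on $\Omega_Q$ and with only the sharp power of $Q$ lost. I would first try to split $B$ as a sum of a term convex-concave in $(x,y)$ of Burkholder type and a term depending on $(w,v)$ with quantitative concavity $\gtrsim Q^{-1}$. Then I would use the elementary inequality $2ab\le\varepsilon a^2+\varepsilon^{-1}b^2$ with $\varepsilon\sim Q$ to close the estimate. The dimension-freeness of the resulting constant follows because only the norms $|x|,|y|$ enter $B$.
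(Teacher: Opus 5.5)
The paper does not prove this statement. It quotes it from Domelevo, Petermichl and Skreb \cite{Petermichl}, and remarks that their technique differs from Burkholder's special-function method. Your proposal therefore has to stand on its own, and it has a genuine gap at its centre. You reduce the theorem to the existence of a function $B$ on $H^2\times\Omega_Q$ with majorisation, $B(0,0,\cdot,\cdot)\le 0$ and the concavity condition. You then carry out only the routine It\^o/supermartingale verification and never produce $B$. That reduction carries no information by itself. Consider the value function $\sup\mathbb E\bigl(|X_\tau|^pw_\tau-C^p|Y_\tau|^pw_\tau\bigr)$, taken over all admissible continuations from $(x,y,w,v)$. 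It always satisfies majorisation (take $\tau=0$), and by dynamic programming it satisfies the concavity condition. It satisfies $B(0,0,\cdot,\cdot)\le 0$ exactly when the weighted estimate holds with constant $C$. So, up to regularity, the existence of $B$ with $C_p(Q)\approx(p^*-1)Q^{\max(1,1/(p-1))}$ \emph{is} the theorem. Your last paragraph defers the absorption of the cross terms $\mathrm d\langle X,w\rangle$, $\mathrm d\langle Y,v\rangle$ to a future verification, which concedes exactly this. The ingredients you name do not close the gap either. The $p=2$ ansatz contains an unspecified ``correction''. The function $\sqrt{wv}$ is $1$-homogeneous: its second variation $-\tfrac14\sqrt{wv}\,(\alpha/w-\beta/v)^2$ vanishes in the radial direction $(\alpha,\beta)\parallel(w,v)$. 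So it gives no uniform concavity on $\Omega_Q$ with which to absorb weight increments in that direction.

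The transfer to general $p$ rests on steps that are not valid as stated.
\begin{itemize}
\item Differential subordination is not a linear operator $Y\mapsto X$, so there is no adjoint. The substitution $w\mapsto w^{-1/(p-1)}$ therefore does not by itself reduce $p<2$ to $p>2$. You would need a genuinely bilinear reformulation of the estimate, which you do not provide.
\item Multiplying a modified $\mathcal G_p$ by $w^av^b$ is not shown to preserve the joint concavity. Its mixed second derivatives are precisely the problematic cross terms.
\item A sharp martingale version of Rubio de Francia extrapolation cannot simply be invoked. It needs a sharp weighted Doob inequality together with factorisation and reverse H\"older properties of martingale $A_p$ weights. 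These fail for general filtrations, and your sketch does not address them even in the continuous-path case.
\item Localisation gives boundedness, not continuity of paths. Continuity has to come from the hypotheses; it holds in the paper's application, where the weight martingale $Uw(Y^{T,n}_t)$ is a function of space-time Brownian motion. With jumps, It\^o's formula produces non-local terms that your infinitesimal concavity condition does not control.
\item The appeal to Kallenberg--Sztencel is not what that result gives for the joint process $(X,Y,w)$. It is also unnecessary, since $B$ only sees $|x|$ and $|y|$.
\end{itemize}
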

The last ingredient we shall need is the connection between
heat weights and martingale weights,
which is a straightforward consequence
of Itô's formula.
\begin{lemma}
[{\cite[Section 4]{banuelos2017weightedsquarefunctionestimates}}]
Given a heat weight
$w$, define $W$ as $W^{T,n}_t=U w(Y_t^{T,n})$,
where $Y_t^{T,n}$ is the space-time Brownian motion with
uniform initial distribution on $[-n,n]^d$ and $Uw$
the heat extension of $w$. Then
\[
[w]_{A_p^{\mathrm{Heat}}}\ge [W^{T,n}]_{A_p^{\mathrm{mart}}}
\]
\end{lemma}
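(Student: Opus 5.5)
We need to prove the lemma: for a heat weight $w$, $[w]_{A_p^{\mathrm{Heat}}}\ge [W^{T,n}]_{A_p^{\mathrm{mart}}}$, where $W^{T,n}_t=Uw(Y^{T,n}_t)$.

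The plan is to compute both quantities appearing in the martingale characteristic explicitly in terms of heat extensions, and then compare pointwise with the supremum defining $[w]_{A_p^{\mathrm{Heat}}}$.

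\textbf{Step 1: $W$ is a martingale.} Since $(x,s)\mapsto Uw(x,T-s)$ solves the backward heat equation, Itô's formula (exactly as for $M^T_t$ in \eqref{eq:dM}) shows that $W^{T,n}_t=P_{T-t}w(\mathcal B_t)$ is a martingale on $[0,T]$. Its terminal value is $W^{T,n}_T=w(\mathcal B_T)$. Under the uniform initial law on $[-n,n]^d$ the relevant integrability holds: $w\in L^1_{\mathrm{loc}}$, and the Gaussian kernel makes $\mathbb E(w(\mathcal B_T))$ finite, at least after a routine truncation of $w$ and a monotone limit, which I would carry out here.

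\textbf{Step 2: the conditional expectation of $w^{-1/(p-1)}$.} By the Markov property of Brownian motion, $\mathbb E\bigl(w(\mathcal B_T)^{-1/(p-1)}\mid\mathcal F_t\bigr)=P_{T-t}g(\mathcal B_t)=Ug(Y^{T,n}_t)$ with $g=w^{-1/(p-1)}$. This identity is valid for nonnegative $g$ even if it is not integrable. Consequently
\[
W^{T,n}_t\,\mathbb E\bigl(w^{-\frac1{p-1}}\mid\mathcal F_t\bigr)^{p-1}=Uw(Y^{T,n}_t)\,Ug(Y^{T,n}_t)^{p-1}.
\]

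\textbf{Step 3: comparison.} Since $Y^{T,n}_t$ takes values in $\R^d\times(0,T]$ for $t<T$, the right-hand side is bounded almost surely by $\sup_{\R^{d+1}_+}Uw\,Ug^{p-1}=[w]_{A_p^{\mathrm{Heat}}}$. Taking the $L^\infty$ norm and the supremum over $t$ gives the claim.

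\textbf{Main obstacle.} The only delicate point is the endpoint $t=T$, where $Y_T$ lies on the boundary $t=0$ and the product becomes $w\cdot w^{-1}=1$. This is harmless, since $[w]_{A_p^{\mathrm{Heat}}}\ge1$ by Jensen (or Hölder) applied to the heat kernel. A second technical point is the $\sigma$-finite, non-probability setting of the filtration; this is handled because the initial law is uniform on a bounded cube, so it is a finite measure and everything can be normalised to a probability space.
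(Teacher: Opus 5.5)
Your argument is correct and is the standard one behind the paper's citation and its remark that the lemma follows from It\^o's formula: the Markov property identifies $W^{T,n}_t$ and $\mathbb E\bigl(w(\mathcal B_T)^{-1/(p-1)}\mid\mathcal F_t\bigr)$ with $Uw$ and $Ug$ evaluated at $Y^{T,n}_t$. The martingale characteristic is therefore a supremum of $Uw\,(Ug)^{p-1}$ along the path, and the endpoint $t=T$ is covered by $[w]_{A_p^{\mathrm{Heat}}}\ge 1$.

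One small correction to Step 1: integrability of $w(\mathcal B_T)$ does not follow from $w\in L^1_{\mathrm{loc}}$ alone. It follows from finiteness of $Uw$, which is forced by $[w]_{A_p^{\mathrm{Heat}}}<\infty$ together with $Ug>0$. Combined with the bound $h_T(x-y)\lesssim_{n,T,d} h_{2T}(y)$ for $x\in[-n,n]^d$, this gives $\mathbb E\bigl(w(\mathcal B_T)\bigr)\lesssim Uw(0,2T)<\infty$, so no truncation is needed.
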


\begin{remark}
The reason we deal with a restricted version of $W$
rather than with $W$ itself is so as to sidestep the issue of 
uniform integrability, which is not entirely trivial since $w$
itself is not a priori in $L^1$ and so $W$ is not
necessarily uniformly
integrable so $W\notin A_p$.
This problem, however, disappears when one is working
on manifolds with finite measure;
in that case one can actually prove that we have 
equality between the two notions of weight, see 
\cite[Theorem 4.2]{Li2019}.
\end{remark}
The weighted Korn estimates now follow from the 
above weighted Burkholder estimate:
\begin{proof}
Define the two martingales
$\mathcal Y^{T,n}_t\mathrel{:}=U f(Y^{T,n}_t)$ and 
\begin{equation*}
(\mathcal Z^{T,n}_t)_{i,j}\mathrel{:}=
\sum_{k=1}^d\int_0^t \partial_i U f_{k,j}(Y^{T,n}_s)
\diff \mathcal B^k_s
\end{equation*}
and similarly the non-truncated versions
$\mathcal Z^T_t$ and $\mathcal Y^T_t$.
The same proof as in Theorem~\ref{thm:Korn}
allows us to obtain
\begin{equation*}
\mathcal Z^{T,n}\preccurlyeq \mathcal Y^{T,n}.
\end{equation*}
To connect the martingale-weighted Burkholder
inequality 
with our weighted estimates for $R\otimes R$,
first notice that
\begin{align*}
\|R\otimes R(f)\|^p_{L^p(w)}
&=\int_{\R^d} |R\otimes R(f)|^p w\diff x\\
&=\lim_{T\to \infty}\int_{\R^d} |
\mathbb E\left(\mathcal Z^T_T|Y^T_T=(x,0)\right)|^pw\diff x\\
&\le \lim_{T\to \infty}\int_{\R^d} \mathbb E
\left(|\mathcal Z_T^T|^p|Y_T^T=(x,0)\right)w \diff x\\
&=\lim_{T\to \infty}\int_{\R^d} 
\mathbb E\left(|\mathcal Z_T^T|^p Uw(Y_T^T)|Y_T^T=(x,0)\right)
\diff x\\
&\le \limsup_T\vvvert{\mathcal Z^T}^p_{L^p(W^T)}.
\end{align*}
Applying Theorem~\ref{thm:burkholderweighted}
to $\mathcal Z^{T,n}$ and $\mathcal Y^{T,n}$, we get
\begin{align*}
\vvvert{\mathcal Z^{T,n}}_{L^p(W^{T,n})}
&\lesssim (p^*-1) [W^{T,n}]_{A_p^{\mathrm{mart}}}
^{\max\left(\frac1{p-1
},1\right)}\vvvert{\mathcal Y^{T,n}}_{L^p(W^{T,n})}
\\&\le (p^*-1)[w]_{A_p^{\text{\upshape Heat}}}^
{\max\left(\frac1{p-1
},1\right)} \|f\|_{L^p(w)}.
\end{align*}
Combining this with the previous estimate and standard limiting
arguments (recall that $f\in C^\infty_{c}$ so $U f(\cdot, t)$
decays exponentially for each $t$) yields
\begin{align*}
\|R\otimes R(f)\|_{L^p(w)}&\le
\limsup_T\limsup_n\vvvert{\mathcal Z^{T,n}}_{L^p(W^{T,n})}\\
&\lesssim (p^*-1)\limsup_T\limsup_n
[W^{T,n}]_{A_p^{\text{\upshape mart}}}^{\max\left(\frac{1}{p-1
},1\right)} \vvvert{\mathcal Y^{T,n}}_{L^p(W^{T,n})}\\
&\le (p^*-1)
[w]_{A_p^{\text{\upshape Heat}}}^{\max\left(\frac{1}{p-1
},1\right)} \|f\|_{L^p(w)},
\end{align*}
which is enough to prove the theorem
thanks to \eqref{eq:RieszKorn1}.
\end{proof}
\section{Concluding remarks}
\subsection{Burkholder functions and
quasiconvexity}
Burkholder's functional, in its form
\[\mathcal G_p(|P_{\mathrm{CO}^-(2)}(A)|,|P_{\mathrm{CO}^+(2)}(A)|),
\]
has been the subject of very extensive study, in particular
on its conjectured quasiconvexity at $0$ (see \cite{KariAstala2024}, \cite{Astala2023TheTheory}, \cite{Guerra2022}). In view of the lower bound in Proposition 
\ref{prop:lowerconvex}, investigating which properties
generalise to the decomposition
$X\oplus X^\perp$ in $M_d(\R)$ is a natural
direction of research.
In particular, the quasiconvexity
at $0$ of the generalised Burkholder functional
seems interesting: in the general setting, many 
rather useful symmetries (such as isotropy) break, and
as $X$ gets smaller the bad part of $\mathcal G_p$
(i.e. $|P_{X^\perp}(A)|$) gets harder to control.
This means that answering the quasiconvexity question
negatively (if at all possible) should be easier 
in the generalised case, especially in high dimensions
(where the ratio $\mathrm{dim}(X)/\mathrm{dim}(M_d(\R))$
gets smaller and smaller). Other than being
useful in the study of Morrey's problem, such an 
example could also be helpful in the study
of Iwaniec's conjecture, as it would 
give an indication of
which properties one has to leverage to prove the 
conjecture.
\subsection{Extensions to other spaces}
General Riemannian manifolds admit 
an inequality of Korn-type (see~\cite{ChenJost}):
it would be interesting to see whether
the argument applied above admits an extension to such a general
setting.

In a different direction, one can ask whether estimates such as 
this also hold for domains $\Omega\subset \R^d$ when we impose 
different boundary conditions rather than 
$u\in W^{1,p}_0(\Omega)$. One popular such condition is
$\int_\Omega \mathcal A(u)=0$, i.e. the $0$-momentum condition; 
with these conditions, however, the
geometry of the domain becomes much more important
(see~\cite{JohnKorn} and~\cite{Lewicka2016OnConditions}).
For that formulation of Korn's inequality,
and similarly for Korn's inequality in its second form,
the calculus of variations approach used here
is not applicable any more
(at least, not as straightforwardly),
so such a study seems to be much more complicated.
\subsection*{Acknowledgements}
I wish to thank Professor Kristensen 
for bringing this problem to my attention,
for the many useful conversations on it
and for his very helpful comments
on previous drafts of this work.
I am also deeply grateful to
Professors P. Ivanisvili, A. Osękowski
and S. Petermichl for very kindly
answering my questions.
Finally, I would like to thank 
A. B. Bernardi for her support
during the writing of this paper.
\printbibliography
\end{document}